\theoremstyle{plain}
\newtheorem*{corollary}{Corollary}
\newtheorem{lemma}{Lemma}
\newtheorem{theorem}{Theorem}
\newtheorem*{conjecture}{Conjecture}
\theoremstyle{remark}
\newtheorem*{remark}{Remark}
\theoremstyle{definition}
\newtheorem{example}{Example}
\DeclareMathOperator{\Id}{Id}
\DeclareMathOperator{\id}{id}
\DeclareMathOperator{\ch}{char}
\DeclareMathOperator{\im}{im}
\DeclareMathOperator{\ad}{ad}
\DeclareMathOperator{\GL}{GL}
\DeclareMathOperator{\End}{End}
\DeclareMathOperator{\Aut}{Aut}
\DeclareMathOperator{\sign}{sign}
\DeclareMathOperator{\Ann}{Ann}
\DeclareMathOperator{\Hom}{Hom}
\DeclareMathOperator{\PIexp}{PIexp}
\begin{document}

\title[Derivations, gradings, actions of algebraic groups]{Derivations, gradings, actions of algebraic groups, and codimension growth of polynomial identities}

\author{A.\,S.~Gordienko}
\address{Vrije Universiteit Brussel, Belgium}
\email{alexey.gordienko@vub.ac.be} 
\author{M.\,V.~Kochetov}
\address{Memorial University of Newfoundland, St. John's, NL, Canada}
\email{mikhail@mun.ca}
\keywords{Associative algebra, Lie algebra, Wedderburn~--- Mal'cev Theorem,
Levi Theorem, polynomial identity,
grading, derivation, Hopf algebra, $H$-module algebra, 
codimension, affine algebraic group.}

\begin{abstract}
Suppose a finite dimensional semisimple Lie algebra~$\mathfrak g$ acts by derivations on 
a finite dimensional associative or Lie algebra $A$ over a field of characteristic $0$.
We prove the $\mathfrak g$-invariant analogs of Wedderburn~--- Mal'cev and Levi theorems,
and the analog of Amitsur's conjecture
on asymptotic behavior for codimensions of polynomial identities with derivations
of $A$.  It turns out that for associative algebras the differential PI-exponent coincides
with the ordinary one.
Also we prove the analog of Amitsur's conjecture
for finite dimensional associative algebras with an action of a reductive affine algebraic group
by automorphisms and anti-automorphisms or graded by an arbitrary Abelian group.
In addition, we provide criteria for $G$-, $H$- and graded simplicity in terms of codimensions.
\end{abstract}

\subjclass[2010]{Primary 16R10; Secondary 17B01, 16R50, 16W20, 16W22, 16W25,
16W50, 17B40, 16T05.}

\thanks{
The first author is supported by FWO Vlaanderen Pegasus Marie Curie post doctoral fellowship
(Belgium). The second author is supported by by the Natural Sciences and Engineering 
Research Council (NSERC) of Canada, Discovery Grant \# 341792-07.
}

\maketitle

\section{Introduction} 

The Levi Theorem is one of the main results of structure Lie theory, as
well as the Wedderburn~--- Mal'cev Theorem is one of the central results
in structure ring theory. We are interested in Lie and associative algebras
with an additional structure, e.g. graded, $H$-(co)module, or $G$-algebras, and in decompositions
compatible with these structures. In 1957, E.J.~Taft proved~\cite{Taft} the $G$-invariant Levi and Wedderburn~--- Mal'cev
theorems 
for $G$-algebras with an action of a finite group $G$ by automorphisms and anti-automorphisms.
Due to a well-known duality between $G$-gradings and $G$-actions, Taft's result implies
graded decompositions of algebras graded by a finite Abelian group $G$
over an algebraically closed field of characteristic $0$. 
The study of Wedderburn decompositions for $H$-module algebras was started by A.\,V. Sidorov~\cite{Sidorov}
in 1986. In 1999, D. \c Stefan and F. Van Oystaeyen~\cite{SteVanOyst} proved
 the $H$-coinvariant Wedderburn~--- Mal'cev Theorem for finite dimensional $H$-comodule associative algebras,
 where $H$ is
a Hopf algebra  with an $\ad$-invariant left integral $t\in H^*$
such that $t(1)=1$.
In particular, they proved the $H$-(co)invariant Wedderburn~--- Mal'cev Theorem for finite dimensional semisimple $H$
over a field of characteristic $0$, the graded Wedderburn~--- Mal'cev Theorem for  any grading group provided that the Jacobson radical is graded too,
and the $G$-invariant Wedderburn~--- Mal'cev Theorem for associative algebras with a rational action of a reductive algebraic group $G$ by automorphisms only.
 The graded Levi Theorem for finite dimensional Lie algebras over an algebraically closed field of characteristic $0$, graded by a finite group, was proved by D.~Pagon, D.~Repov\v s, and M.V.~Zaicev~\cite{PaReZai} in 2011.

In 2012, the first author proved~\cite{ASGordienko4} the $H$-coinvariant Levi Theorem in the case when the Hopf algebra $H$ has an $\ad$-invariant left integral $t\in H^*$
such that $t(1)=1$. As a consequence he obtained the $H$-invariant Levi Theorem
for $H$-module Lie algebras for a finite dimensional semisimple Hopf algebra $H$,
 the graded Levi Theorem for an arbitrary grading group,
and the $G$-invariant Levi Theorem for Lie algebras with a rational action of a reductive algebraic group $G$
by automorphisms only.

In this paper we prove the $G$-invariant Wedderburn~--- Mal'cev and Levi theorems (Theorems~\ref{TheoremAffAlgGrWedderburn} and~\ref{TheoremAffAlgGrLevi} in Subsection~\ref{SubsectionGDecomp})
where $G$ is a reductive affine algebraic group over an algebraically closed field of characteristic $0$, acting rationally by automorphisms and anti-automorphisms. Also we prove the $\mathfrak g$-invariant Wedderburn~--- Mal'cev and Levi theorems (Theorems~\ref{TheoremDerWedderburn} and~\ref{TheoremDerLevi} in Subsection~\ref{SubsectionGApplToDiff})
where $\mathfrak g$ is a finite dimensional semisimple Lie algebra over
a field of characteristic $0$, acting by derivations.

One of the applications of invariant decompositions is in the combinatorial theory of graded, differential, $G$- or $H$-polynomial 
identities.

In the 1980's, a conjecture about the asymptotic behaviour
of codimensions of ordinary polynomial identities was made
by S.A.~Amitsur. Amitsur's conjecture was proved in 1999 by
A.~Giambruno and M.V.~Zaicev~\cite[Theorem~6.5.2]{ZaiGia} for associative algebras, in 2002 by M.V.~Zaicev~\cite{ZaiLie}
 for finite dimensional Lie algebras, and in 2011 by A.~Giambruno,
 I.P.~Shestakov, M.V. Zaicev for finite dimensional Jordan and alternative
 algebras~\cite{GiaSheZai}. In 2011, the first author proved its analog
 for polynomial identities of finite dimensional representations of Lie
 algebras~\cite{ASGordienko}.
 
 Alongside with ordinary polynomial
identities of algebras, graded, differential, $G$- and
$H$-identities are
important too~\cite{BahtGiaZai, BahturinLinchenko, BahtZaiGraded, BahtZaiGradedExp, BahtZaiSehgal, 
 BereleHopf, Chuang, DeFilippis, Kharchenko, Linchenko, Sharma}.
 Usually, to find such identities is easier
  than to find the ordinary ones. Furthermore, each of these types of identities completely determines  the ordinary polynomial identities. 
Therefore the question arises whether the conjecture
holds for graded codimensions, $G$-, $H$-codimensions,
and codimensions of polynomial identities with derivations.
The analog of Amitsur's conjecture
for codimensions of graded identities was proved in 2010--2011 by
E.~Aljadeff,  A.~Giambruno, and D.~La~Mattina~\cite{AljaGia, AljaGiaLa, GiaLa}
  for all associative PI-algebras graded by a finite group.
   As a consequence, they proved the analog  of the conjecture for $G$-codimensions
   for any associative PI-algebra with an action of a finite Abelian group $G$ by automorphisms.
The case when $G=\mathbb Z_2$
acts on a finite dimensional associative algebra by automorphisms and anti-automorphisms
(i.e. polynomial identities with involution)
 was considered by A.~Giambruno and
 M.V.~Zaicev~\cite[Theorem~10.8.4]{ZaiGia}
 in 1999.
 
  In 2012, the first author~\cite{ASGordienko3} proved
 the analog of Amitsur's conjecture for polynomial
 $H$-identities of finite dimensional associative algebras
 with a generalized $H$-action under some assumptions on the $H$-action. As a consequence, 
 the analog of Amitsur's conjecture was proved for
 $G$-codimensions of finite dimensional associative algebras with
 an action of an arbitrary finite group $G$ by automorphisms and anti-automorphisms,
 and for $H$-codimensions of finite dimensional $H$-module associative algebras for a
 finite dimensional semisimple Hopf algebra~$H$.
     
      In 2012, the first author~\cite{ASGordienko5} proved 
  the analog of Amitsur's conjecture for graded polynomial identities
  of finite dimensional
Lie algebras graded by any group, for $G$-identities
of finite dimensional
Lie algebras with a rational action of a reductive affine algebraic group,
and for $H$-identities of finite dimensional $H$-module Lie algebras under some assumptions on the $H$-action.
(A particular case of this was proved in~\cite{ASGordienko2}.)

This article is concerned with the analog of Amitsur's conjecture for codimensions of differential identities of finite dimensional Lie
  and associative algebras with an action of a finite dimensional semisimple Lie algebra
  by derivations
(Section~\ref{SectionDerH}),
$G$-codimensions of associative algebras with a rational action of a reductive affine algebraic group
$G$ by automorphisms and anti-automorphisms (Section~\ref{SectionG}), and
graded codimensions of associative algebras graded by an arbitrary Abelian group (Section~\ref{SectionGraded}).
Here we use an easy trick (see Theorem~\ref{TheoremAssGenHopf})
in order to remove in~\cite[Theorem~5]{ASGordienko3} the requirement for $\dim H$ to be finite.

 In Section~\ref{SectionPIExp} we provide explicit formulas
for the exponents of differential, graded, and $G$-identities that are natural generalizations of the formulas
for the ordinary PI-exponents (see~\cite[Section~6.2]{ZaiGia} and~\cite[Definition~2]{ZaiLie}). 
It turns out that the differential PI-exponent of a finite dimensional associative algebra coincides with the ordinary one if the Lie algebra acting by derivations is finite dimensional semisimple. The same is true for the exponent
of $G$-identities when $G$ is a connected reductive affine algebraic group.
In Section~\ref{SectionExamples} we provide criteria for graded, $G$-,
and $H$-simplicity; in the proof, we will use an upper bound for codimensions, which is established in Section~\ref{SectionSncocharAndUpper}.


\section{Structure theory}
\subsection{Wedderburn~--- Mal'cev and Levi decompositions for $G$-algebras}
\label{SubsectionGDecomp}

We use the exponential notation for the action of a
 group.
Let $A$ be an algebra over a field $F$.
Recall that $\psi \in \GL(A)$ is an {\itshape
automorphism} of $A$ if $(ab)^\psi = a^\psi b^\psi$
for all $a,b \in A$
and {\itshape
anti-automorphism} of $A$ if $(ab)^\psi = b^\psi a^\psi$
for all $a, b \in A$. The automorphisms of $A$
form a group, which is denoted by $\Aut(A)$.
The automorphisms and anti-automorphisms of $A$
form a group, which is denoted by $\Aut^{*}(A)$.
Note that $\Aut(A)$ is a normal subgroup of $\Aut^{*}(A)$
of index ${}\leqslant 2$.

Let $G$ be a group.
  We say that an associative algebra $A$ is an \textit{algebra with $G$-action}
  or a \textit{$G$-algebra}
   if $A$ is endowed with a homomorphism $\varphi \colon G \to
  \Aut^{*}(A)$.
Note that $G_0 :=  \varphi^{-1}(\Aut(A))$ is
a normal subgroup of $G$ of index ${}\leqslant 2$.

We claim that the following theorem holds:

\begin{theorem}\label{TheoremAffAlgGrWedderburn}
Let $A$ be a finite dimensional associative algebra over an algebraically closed field $F$ of characteristic $0$ and let $G$ be a reductive affine algebraic group over $F$.
Suppose $A$ is endowed with a rational action of $G$ by automorphisms and anti-automorphisms.
 Then there exists a maximal semisimple subalgebra $B\subseteq A$ such that
$A=B\oplus J$ (direct sum of $G$-invariant subspaces) where $J:=J(A)$ is the Jacobson radical of $A$.
\end{theorem}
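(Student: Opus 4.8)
The plan is to reduce the $G$-invariant statement to the classical Wedderburn~--- Mal'cev theorem together with an averaging (complete reducibility) argument, which is available because $G$ is reductive and the action is rational. First I would invoke the ordinary Wedderburn~--- Mal'cev theorem to obtain a (not necessarily $G$-invariant) semisimple subalgebra $B_0 \subseteq A$ with $A = B_0 \oplus J$ as vector spaces, where $J = J(A)$. The radical $J$ is automatically $G$-invariant, since $J$ is characteristic and every $\varphi(g)$ is an automorphism or anti-automorphism (an anti-automorphism sends $J$ to the radical of the opposite algebra, which is again $J$). So the quotient $A/J$ is a $G$-algebra in a natural way, and $B_0 \cong A/J$ as algebras; the task is to find a $G$-invariant complement to $J$ that is also a subalgebra.

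The key step is to replace $B_0$ by a $G$-invariant subalgebra. Here I would follow the standard cohomological proof of Wedderburn~--- Mal'cev but carried out $G$-equivariantly. Because the $G$-action is rational and $G$ is reductive, every rational $G$-module is completely reducible; in particular $A$ itself decomposes as a direct sum of $G$-submodules, and since $J$ is a $G$-submodule we may choose a $G$-invariant vector-space complement $V$ to $J$, giving a $G$-equivariant linear projection $\pi \colon A \to A$ with image $V$ and kernel $J$. Composing the algebra isomorphism $A/J \to B_0$ with the quotient map and then... more precisely, one transports the problem to the relevant Hochschild-type cohomology: the obstruction to making a lift a homomorphism lives in $H^2$ of $A/J$ with coefficients in $J$ (as a bimodule), and the ambiguity in a homomorphism lift lives in $H^1$. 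The point is that for a finite-dimensional semisimple algebra these cohomology groups vanish, and the vanishing is realized by an explicit averaging/contracting operator which can be chosen $G$-equivariantly: one replaces ordinary averaging over a finite group (as in Taft) or integration (as in \c Stefan--Van Oystaeyen) by the Reynolds operator $R \colon A \to A^{G}$ attached to the reductive group $G$. Applying $R$ to the relevant $1$-cochain that corrects $\pi$ produces a $G$-invariant correction term, hence a $G$-invariant algebra homomorphism $A/J \to A$ splitting the projection; its image is the desired $G$-invariant maximal semisimple subalgebra $B$.

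The main obstacle I anticipate is the presence of \emph{anti}-automorphisms, i.e.\ that $\varphi$ maps into $\Aut^*(A)$ rather than $\Aut(A)$, so $G$ does not act on $A$ purely by algebra automorphisms and one cannot directly apply a theorem about rational actions of reductive groups by automorphisms (such as the one in \c Stefan--Van Oystaeyen). The fix is to pass to the index-${\leqslant}2$ subgroup $G_0 = \varphi^{-1}(\Aut(A))$, which does act by automorphisms and is still reductive (being a closed subgroup of finite index in a reductive group). One first applies the automorphism case over $G_0$ to get a $G_0$-invariant semisimple complement, and then must upgrade $G_0$-invariance to full $G$-invariance. For this I would again use that the cohomological obstructions vanish: the set of $G_0$-invariant semisimple complements to $J$ is a torsor under the relevant $G_0$-invariant $1$-cohomology, which is zero, so such a complement is in fact \emph{unique}; uniqueness then forces it to be stable under the outer elements of $G$ as well (they permute $G_0$-invariant complements because $G_0$ is normal), hence $G$-invariant. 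This uniqueness-forces-invariance trick is the technical heart of the argument, and I expect its careful justification — ensuring the averaging is compatible with the anti-automorphism-induced switch between $A$ and $A^{\mathrm{op}}$ — to be where the real work lies. Everything else (rationality of the induced action on $A/J$ and on $J$, reductivity of $G_0$, $G$-invariance of $J$) is routine.
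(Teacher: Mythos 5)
Your reduction to the index-two subgroup $G_0$ and the use of reductivity/complete reducibility to handle the $G_0$-part matches the paper, which at that stage simply cites \c Stefan--Van Oystaeyen to get a $G_0$-equivariant splitting $\varphi\colon A/J\hookrightarrow A$. The gap is in your mechanism for upgrading $G_0$-invariance to $G$-invariance. You claim the $G_0$-invariant semisimple complement is \emph{unique} because ``the set of $G_0$-invariant complements is a torsor under the relevant $G_0$-invariant $1$-cohomology, which is zero.'' This is not correct: when $J^2=0$, the set of algebra splittings of $\pi\colon A\to A/J$ is a torsor under the space of $(\varphi,\varphi)$-skew derivations $A/J\to J$ (the $1$-\emph{cocycles} $Z^1$), not under $H^1$; the vanishing of $H^1$ only says that all splittings are conjugate by elements $1+j$, $j\in J$, not that there is one of them. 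Since $Z^1$ contains all inner skew derivations, it is nonzero in general, and the same remains true after passing to $G_0$-invariants. A concrete counterexample to your uniqueness claim: let $A$ be the upper triangular $2\times 2$ matrices with $G=\mathbb Z_2$ acting by the anti-automorphism $X\mapsto wX^{T}w$ ($w$ the antidiagonal permutation matrix), so $G_0$ is trivial; then every complement $\langle e_{11}+te_{12},\ e_{22}-te_{12}\rangle$, $t\in F$, is $G_0$-invariant, so uniqueness fails and your argument gives no $G$-invariant complement.

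What actually works — and is what the paper does — is to \emph{average the splitting itself} over the nontrivial coset: fix $g\in G\setminus G_0$ and set $\tilde\varphi(a)=\bigl(\varphi(a)+g\varphi(g^{-1}a)\bigr)/2$. One checks directly that $\tilde\varphi$ is $G$-equivariant and still splits $\pi$; the nontrivial point is that $\tilde\varphi$ is again an algebra homomorphism, which is proved by observing that a splitting $\psi$ is a homomorphism iff $\varphi-\psi$ is a $(\varphi,\varphi)$-skew derivation, and that $a\mapsto\varphi(a)-g\varphi(g^{-1}a)$ is such a derivation. This verification uses $J^2=0$ in an essential way (an average of two homomorphisms into $A$ is not a homomorphism in general), so the proof must first reduce to the case $J^2=0$ and then handle general $J$ by induction on $\dim J$ via $A/J^2$ — a reduction your outline omits. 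Your instinct to locate the difficulty in the passage from $G_0$ to $G$ is right, but the uniqueness route cannot be repaired; the averaging of the splitting map, together with the $J^2=0$ reduction, is the missing idea.
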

\begin{proof}
First we prove the theorem for the case $J^2=0$.

If $G$ is acting by automorphisms only, then the theorem follows from~\cite[Corollary~2.10]{SteVanOyst}.
Hence we may assume that the subgroup $G_0 \subset G$ is of index $2$.
Note that $G_0$ is closed since it is defined by polynomial equations.

Moreover, $J$ is $G$-invariant since the maximal nilpotent ideal is invariant under all
automorphisms and anti-automorphisms. Let $\pi \colon A \to A/J$ be the corresponding
natural projection. By~\cite[Corollary~2.10]{SteVanOyst}, there exists a $G_0$-equivariant
homomorphic embedding $\varphi \colon  A/J \hookrightarrow A$ such that $\pi\varphi=\id_{A/J}$.

Fix $g \in G\backslash G_0$. Define the map $\tilde\varphi \colon  A/J \to A$
by $\tilde \varphi(a) = (\varphi(a)+g\varphi(g^{-1} a))/2$ for $a \in A/J$.
Then $$\tilde \varphi(ha)=(\varphi(ha)+g\varphi(g^{-1} ha))/2
= (h\varphi(a)+g\varphi((g^{-1} hg)g^{-1}a))/2
=$$ $$(h\varphi(a)+g(g^{-1} hg)\varphi(g^{-1}a))/2=h\tilde \varphi(a)
\text{ for all }h\in G_0,\ a\in A/J$$
and \begin{equation*}\begin{split}\tilde \varphi(g a)=(\varphi(g a)+g\varphi(a))/2= 
g(g^{-1}\varphi(g^2 g^{-1} a)+\varphi(a))/2= \\
g(g^{-1}g^2\varphi(g^{-1} a)+\varphi(a))/2 = g \tilde\varphi(a)
\text{ for all }a\in A/J.\end{split}\end{equation*}
Hence $\tilde\varphi$ is $G$-equivariant.

 Let $a \in A/J$. Then $\pi\tilde\varphi(a)=
 (\pi\varphi(a)+g\pi\varphi(g^{-1} a))/2=a$. We claim that $\tilde\varphi$
 is a homomorphism of algebras. 
 
 First we observe that a linear map $\psi \colon A/J \to A$,
 such that $\pi\psi = \id_{A/J}$, is a homomorphism of algebras if and only if $(\varphi-\psi)
 \colon A/J \to J$ is a {\itshape $(\varphi,\varphi)$-skew derivation}, i.e.
 $(\varphi-\psi)(ab)=(\varphi-\psi)(a)\varphi(b)+\varphi(a)(\varphi-\psi)(b)$
 for all $a,b \in A/J$.  Indeed, if $\psi \colon A/J \to A$
 is a homomorphism of algebras, then  $$(\varphi-\psi)(ab)=\varphi(a)\varphi(b)-\psi(a)\psi(b)
 =(\varphi-\psi)(a)\varphi(b)+\psi(a)(\varphi-\psi)(b) =$$
 $$(\varphi-\psi)(a)\varphi(b)+\psi(a)(\varphi-\psi)(b)+(\varphi-\psi)(a)(\varphi-\psi)(b)
 = (\varphi-\psi)(a)\varphi(b)+\varphi(a)(\varphi-\psi)(b)$$
 since $(\varphi-\psi)(a)(\varphi-\psi)(b) \in J^2 = 0$ for all $a,b\in A/J$.
 The converse is proved by a similar calculation.
 
 Hence $a \mapsto (\varphi(a) - g \varphi(g^{-1}a))$, $a\in A/J$, is a $(\varphi,\varphi)$-skew derivation, and
 $$a \mapsto \varphi(a)-(\varphi(a) - g \varphi(g^{-1}a))/2=\tilde\varphi(a)$$
 is a homomorphism of algebras. Therefore, we can take $B = \im\tilde\varphi$,
 $A=\im\tilde\varphi \oplus \ker\tilde\varphi=B\oplus J$, and the theorem is proved for
 the case $J^2=0$.
 
 We prove the general case by induction on $\dim J$. Suppose $J^2\ne 0$. Hence
 $\dim (J/J^2) < \dim J$ and,
  by induction, $A/J^2 = A_1/J^2 \oplus J/J^2$ where $A_1 \subseteq A$ is a $G$-invariant subalgebra
  such that $A_1/J^2 \cong A/J$ is semisimple. Since the Jacobson radical is nilpotent,  $\dim J^2 < \dim J$ and, by induction, $A_1 = B \oplus J^2$ where $B \cong A/J$ is a $G$-invariant semisimple subalgebra.
  Now we notice that $A=B\oplus J$ (direct sum of $G$-invariant subspaces).
\end{proof}

Analogously, we derive Theorem~\ref{TheoremAffAlgGrLevi} from~\cite[Theorem~5]{ASGordienko4}.

\begin{theorem}\label{TheoremAffAlgGrLevi}
Let $L$ be a finite dimensional Lie algebra over an algebraically closed field $F$ of characteristic $0$
and let $G$ be a reductive affine algebraic group over $F$.
Suppose $L$ is endowed with a rational action of $G$ by automorphisms and anti-automorphisms.
 Then there exists a maximal semisimple subalgebra $B$ in $L$ such that
$L=B\oplus R$ (direct sum of $G$-invariant subspaces).
\end{theorem}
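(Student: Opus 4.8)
The plan is to mimic the proof of Theorem~\ref{TheoremAffAlgGrWedderburn}, with the Jacobson radical replaced by the solvable radical $R$ of $L$ and with \cite[Corollary~2.10]{SteVanOyst} replaced by \cite[Theorem~5]{ASGordienko4} (the $G$-invariant Levi Theorem for a reductive group acting by automorphisms \emph{only}). Since $R$ is the maximal solvable ideal, it is sent to itself by every automorphism and anti-automorphism, so $R$ is $G$-invariant, $L/R$ carries the induced rational $G$-action, and $\pi\colon L\to L/R$ is $G$-equivariant. If $G$ acts by automorphisms only, i.e.\ $G=G_0$, we are done by \cite[Theorem~5]{ASGordienko4}; so assume $G_0$ is a closed subgroup of index $2$ and let $\varphi\colon L/R\to L$ be a $G_0$-equivariant Lie algebra homomorphism with $\pi\varphi=\id_{L/R}$ (equivalently, a $G_0$-invariant Levi subalgebra), as provided by \cite[Theorem~5]{ASGordienko4}.

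First I would treat the case $[R,R]=0$. Fixing $g\in G\setminus G_0$, put $\tilde\varphi(a)=(\varphi(a)+g\varphi(g^{-1}a))/2$ for $a\in L/R$. Exactly as in Theorem~\ref{TheoremAffAlgGrWedderburn} one checks that $\pi\tilde\varphi=\id_{L/R}$ and that $\tilde\varphi$ is $G$-equivariant, the only nontrivial identity being $\varphi(ga)=g^{2}\varphi(g^{-1}a)$, which holds because $g^{2}\in G_0$, $ga=g^{2}(g^{-1}a)$, and $\varphi$ is $G_0$-equivariant. It remains to see that $\tilde\varphi$ is a homomorphism of Lie algebras. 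Write $\varphi'$ for the map $a\mapsto g\varphi(g^{-1}a)$; then $\pi\varphi'=\id_{L/R}$, and $\varphi'$ is again a Lie homomorphism, being a composite of (anti)homomorphisms $L/R\to L/R\to L\to L$ in which the two possible sign changes cancel, so the outcome is a genuine Lie homomorphism irrespective of whether $g$ acts by an automorphism or by an anti-automorphism. As in the associative argument, a linear section $\psi\colon L/R\to L$ of $\pi$ is a Lie homomorphism if and only if $\varphi-\psi\colon L/R\to R$ is a derivation into the $L/R$-module $R$ (with $R$ regarded as a module via $\varphi$ and the adjoint action), the point being that the would-be obstruction term $[(\varphi-\psi)(a),(\varphi-\psi)(b)]$ lies in $[R,R]=0$; the converse is the same computation read backwards. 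Applying this to $\psi=\varphi'$, the map $\varphi-\varphi'$ is such a derivation, hence so is $(\varphi-\varphi')/2$, and therefore $\tilde\varphi=\varphi-(\varphi-\varphi')/2$ is a Lie homomorphism. Thus $B:=\im\tilde\varphi$ is a $G$-invariant semisimple subalgebra with $L=B\oplus R$ (a direct sum of $G$-invariant subspaces); since $\dim B=\dim L/R$ is the largest dimension a semisimple subalgebra of $L$ can have, $B$ is a maximal semisimple subalgebra, and the case $[R,R]=0$ is settled.

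The general case follows by induction on $\dim R$. If $[R,R]\neq 0$, then $[R,R]$ is a nonzero $G$-invariant ideal of $L$ with $[R,R]\subsetneq R$. The solvable radical of $L/[R,R]$ is $R/[R,R]$, of dimension $<\dim R$, so by induction $L/[R,R]=L_1/[R,R]\oplus R/[R,R]$ for a $G$-invariant subalgebra $L_1\subseteq L$ with $L_1/[R,R]\cong L/R$ semisimple; the solvable radical of $L_1$ is then $[R,R]$, again of dimension $<\dim R$, so by induction $L_1=B\oplus[R,R]$ with $B\cong L/R$ a $G$-invariant semisimple subalgebra. Since $B\cap R=0$ and $\dim B=\dim L-\dim R$, we get $L=B\oplus R$ as $G$-modules, which is the desired decomposition.

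The main obstacle is the same one met in the associative case: making the averaging $\tilde\varphi$ land back among the homomorphism sections. This works precisely because, when $[R,R]=0$, the set of such sections is an affine subspace over the vector space $\Der(L/R,R)$, so it is stable under the affine combination $\varphi\mapsto\varphi-(\varphi-\varphi')/2$; the remaining care is the routine sign bookkeeping that keeps $\varphi'\colon a\mapsto g\varphi(g^{-1}a)$ a Lie homomorphism also when $g$ acts by an anti-automorphism.
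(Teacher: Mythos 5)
Your proof is correct and is exactly the argument the paper intends: the paper's own ``proof'' of this theorem is the single remark that it is derived from \cite[Theorem~5]{ASGordienko4} analogously to Theorem~\ref{TheoremAffAlgGrWedderburn}, and your write-up carries out that analogy faithfully --- the coset averaging $\tilde\varphi=(\varphi+\varphi')/2$ with the sign bookkeeping for anti-automorphisms, the observation that when $[R,R]=0$ the homomorphism sections form an affine space over the derivations $L/R\to R$, and the induction on $\dim R$ via the ideal $[R,R]$ in place of $J^2$.
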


\subsection{Connection between derivations and automorphisms}

The main trick in our investigation of algebras with derivations is to replace the action of a Lie algebra by derivations with an action
of an affine algebraic group by automorphisms, which in our situation has been studied better.

\begin{theorem}\label{TheoremDerAutConnection}
Let $A$ be a finite dimensional algebra, 
not necessarily associative, over an algebraically closed field $F$ of characteristic $0$. Suppose a finite dimensional semisimple Lie algebra $\mathfrak g$
is acting on $A$ by derivations. Then there exists a rational
representation of a simply connected semisimple affine algebraic group $G$ on $A$ by automorphisms
such that \begin{enumerate}\item the Lie algebra of $G$ equals $\mathfrak g$;
\item the $\mathfrak g$-action on $A$ is the differential of the $G$-action on $A$;
\item all $\mathfrak g$-submodules
in $A$ are $G$-invariant subspaces and vice versa.\end{enumerate}
\end{theorem}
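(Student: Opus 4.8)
The plan is to produce $G$ as the simply connected semisimple algebraic group attached to $\mathfrak g$, and then to lift (``integrate'') everything in sight from $\mathfrak g$ to $G$; the general principle that makes this work is that in characteristic~$0$ a connected algebraic group and its Lie algebra have the same fixed vectors in any rational module, and the same invariant subspaces.

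First I would invoke the Chevalley construction: over the algebraically closed field $F$ of characteristic~$0$ there is a (connected) simply connected semisimple affine algebraic group $G$ with $\operatorname{Lie}(G)\cong\mathfrak g$, and identifying these two Lie algebras gives statement~(1). The given structure is a finite-dimensional representation $\mathfrak g\to\Der(A)\subseteq\mathfrak{gl}(A)$; since $G$ is simply connected and $\ch F=0$, it lifts to a rational representation $\rho\colon G\to\GL(A)$, unique because $G$ is connected, whose differential $d\rho$ is precisely the given $\mathfrak g$-action. This is statement~(2).

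Next I would check that $\rho(G)\subseteq\Aut(A)$. Endow $V:=\Hom_F(A\otimes A,A)$ with the rational $G$-module structure coming from $\rho\otimes\rho$ on the source and $\rho$ on the target; its differential is the usual $\mathfrak g$-module structure on $V$. Let $\mu\in V$ be the multiplication of $A$. Unwinding the definitions, for $x\in\mathfrak g$ the condition $x\cdot\mu=0$ says exactly that $x(ab)=(xa)b+a(xb)$ for all $a,b\in A$, so $\mu\in V^{\mathfrak g}$ holds \emph{precisely because} $\mathfrak g$ acts by derivations. Since $G$ is connected and $\ch F=0$, one has $V^{G}=V^{\mathfrak g}$, whence $\mu\in V^{G}$; but $g\cdot\mu=\mu$ means exactly that $\rho(g)$ is an algebra automorphism of $A$. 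Thus $G$ acts by automorphisms.

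Finally, for statement~(3), given a subspace $W\subseteq A$ put $\mathfrak n_W:=\{T\in\mathfrak{gl}(A):TW\subseteq W\}$, a subalgebra of $\mathfrak{gl}(A)$, and $P_W:=\{T\in\GL(A):TW=W\}$, a closed subgroup of $\GL(A)$ with $\operatorname{Lie}(P_W)=\mathfrak n_W$. A subspace $W$ is a $\mathfrak g$-submodule iff $d\rho(\mathfrak g)\subseteq\mathfrak n_W$, and then $\rho^{-1}(P_W)$ is a closed subgroup of $G$ with Lie algebra $(d\rho)^{-1}(\mathfrak n_W)=\mathfrak g=\operatorname{Lie}(G)$, hence equal to $G$ by connectedness; so $W$ is $G$-invariant. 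Conversely every $G$-invariant subspace is $\mathfrak g$-invariant by differentiation. The one step I expect to require care --- the ``hard part'' --- is this passage from the infinitesimal to the global statement: one must be certain the representation actually integrates, and that the automorphism condition, which looks purely infinitesimal when read off from $\mu$, really does propagate from $\mathfrak g$ to all of $G$; this is exactly where simple-connectedness and $\ch F=0$ are indispensable, and everything else is bookkeeping.
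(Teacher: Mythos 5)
Your proposal is correct and follows essentially the same route as the paper: construct the simply connected $G$ with $\operatorname{Lie}(G)=\mathfrak g$, integrate the representation, view the multiplication $\mu$ as a $\mathfrak g$-fixed vector in $\Hom_F(A\otimes A,A)$ (the paper uses the isomorphic space $A^*\otimes A^*\otimes A$), and invoke the characteristic-$0$ equality of $G$- and $\mathfrak g$-fixed vectors and invariant subspaces (the paper cites Humphreys, Theorem~13.2, for both steps). Your stabilizer-subgroup argument for statement~(3) is just a spelled-out version of the same principle.
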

\begin{proof}
By~\cite[Chapter~XVIII, Theorem~5.1]{Hochschild}, there exists a simply connected affine algebraic
group $G$ such that the Lie algebra of $G$ is isomorphic to $\mathfrak g$.
The $\mathfrak g$-module $A$ is the direct sum of irreducible $\mathfrak g$-submodules
that correspond to some dominant weights of $\mathfrak g$.
We define on the irreducible $\mathfrak g$-submodules the rational action of $G$ corresponding to those weights.

We claim that $G$ acts on $A$ by automorphisms. Indeed, we can treat the multiplication $\mu \colon A \otimes A \to A$
as an element $\mu = \sum_{i} \mu_{1i} \otimes \mu_{2i} \otimes \mu_{3i} \in A^*\otimes A^* \otimes A$. We have the following action of $G$ and $\mathfrak g$
on the space $A^*\otimes A^* \otimes A$: $$g(u(\cdot) \otimes v(\cdot) \otimes w)=u(g^{-1}(\cdot))
\otimes v(g^{-1}(\cdot)) \otimes (gw),$$ $$\delta (u(\cdot) \otimes v(\cdot) \otimes w)
=u(\cdot) \otimes v(\cdot) \otimes \delta w - u(\delta (\cdot)) \otimes v(\cdot) \otimes w
- u(\cdot) \otimes v(\delta (\cdot)) \otimes w$$
where $u, v \in A^*$, $w \in A$, $\delta \in \mathfrak g$, $g\in G$.
Since $\delta (bc)=(\delta b)c+b(\delta c)$ for all $b,c \in A$, $\delta \in \mathfrak g$,
we have $\sum_{i} \mu_{1i}(b)  \mu_{2i}(c) (\delta \mu_{3i})
= \sum_{i} (\mu_{1i}(\delta b)  \mu_{2i}(c) \mu_{3i}+\mu_{1i}(b)  \mu_{2i}(\delta c) \mu_{3i})$.
Hence $\delta  \mu = 0$ for all $\delta  \in \mathfrak g$, and
 $\mathfrak g \mu = 0$. By~\cite[Theorem~13.2]{HumphreysAlgGr}, $G\mu = \mu$.
 Hence $g(bc)=(gb)(gc)$ and $G$ acts on $A$ by automorphisms. Using~\cite[Theorem~13.2]{HumphreysAlgGr} once again,
 we get that $G$ and $\mathfrak g$ have in $A$ the same invariant subspaces. 
\end{proof}

\subsection{Wedderburn~--- Mal'cev and Levi decompositions for algebras with derivations}

Theorem~\ref{TheoremDerAutConnection} enables us to replace the action of a semisimple Lie algebra by derivations
with an action of a semisimple affine algebraic group by automorphisms. Hence~\cite[Corollary~2.10]{SteVanOyst}
(or Theorem~\ref{TheoremAffAlgGrWedderburn}) implies 

\begin{theorem}\label{TheoremDerWedderburn}
Let $A$ be a finite dimensional associative algebra and $\mathfrak g$ be a finite dimensional semisimple Lie algebra over an algebraically closed field $F$ of characteristic $0$.
 Suppose $\mathfrak g$ is acting on $A$ by derivations.
  Then there exists a maximal semisimple subalgebra $B$ in $A$ such that
$A=B\oplus J(A)$ (direct sum of $\mathfrak g$-submodules).
\end{theorem}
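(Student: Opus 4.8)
The plan is to transfer the problem to the group setting by means of Theorem~\ref{TheoremDerAutConnection} and then invoke the already-established $G$-invariant Wedderburn~--- Mal'cev theorem. First I would apply Theorem~\ref{TheoremDerAutConnection} to the given action of $\mathfrak g$ on $A$: this produces a simply connected semisimple affine algebraic group $G$ with $\mathfrak g$ as its Lie algebra, together with a rational action of $G$ on $A$ by automorphisms whose differential is the original $\mathfrak g$-action, and, crucially, with the property (item~(3) of that theorem) that the $\mathfrak g$-submodules of $A$ are exactly the $G$-invariant subspaces.

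Next, since a semisimple affine algebraic group over a field of characteristic $0$ is reductive, $G$ satisfies the hypotheses of Theorem~\ref{TheoremAffAlgGrWedderburn}; in fact only the easier case of that theorem is needed here, because $G$ acts by automorphisms only, so one may equally well cite~\cite[Corollary~2.10]{SteVanOyst} directly. Either way, there is a subalgebra $B\subseteq A$ with $A=B\oplus J(A)$, where both summands are $G$-invariant subspaces and $B\cong A/J(A)$ is semisimple. Applying item~(3) of Theorem~\ref{TheoremDerAutConnection} once more, $B$ and $J(A)$ are $\mathfrak g$-submodules, so $A=B\oplus J(A)$ is a direct sum of $\mathfrak g$-submodules, which is the assertion of the theorem.

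The only point requiring a word of justification is that $B$ is a \emph{maximal} semisimple subalgebra and not merely a semisimple complement to the radical: since $F$ has characteristic $0$, the algebra $A/J(A)$ is separable, and by the classical Wedderburn~--- Mal'cev theorem any subalgebra of $A$ that complements $J(A)$ is automatically a maximal semisimple subalgebra (all such being conjugate under $1+J(A)$). There is no genuine obstacle left once Theorem~\ref{TheoremDerAutConnection} is available: the whole weight of the argument has been absorbed into the passage from derivations to automorphisms, and the present theorem follows formally from it together with the structure theory of Subsection~\ref{SubsectionGDecomp}.
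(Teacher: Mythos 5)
Your proposal is correct and is essentially identical to the paper's own proof: the paper likewise derives Theorem~\ref{TheoremDerWedderburn} by using Theorem~\ref{TheoremDerAutConnection} to replace the $\mathfrak g$-action by a rational action of a semisimple (hence reductive) algebraic group $G$ by automorphisms, and then invoking \cite[Corollary~2.10]{SteVanOyst} or Theorem~\ref{TheoremAffAlgGrWedderburn}, with item~(3) of Theorem~\ref{TheoremDerAutConnection} ensuring the summands are $\mathfrak g$-submodules. Your additional remark on maximality is a standard point the paper leaves implicit, and nothing further is needed.
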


Analogously, \cite[Theorem~5]{ASGordienko4} (or Theorem~\ref{TheoremAffAlgGrLevi}) implies 

\begin{theorem}\label{TheoremDerLevi}
Let $L$ and $\mathfrak g$ be finite dimensional Lie algebras over an algebraically closed field $F$ of characteristic $0$.  Suppose $\mathfrak g$ is semisimple and acting on $L$ by derivations.
  Then there exists a maximal semisimple subalgebra $B$ in $L$ such that
$L=B\oplus R$ (direct sum of $\mathfrak g$-submodules) where $R$ is the solvable radical of $L$.
\end{theorem}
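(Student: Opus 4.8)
The plan is to reduce this to the already-established $G$-invariant Levi Theorem by passing from the Lie-algebra action by derivations to a group action by automorphisms, exactly as was done for the associative case in Theorem~\ref{TheoremDerWedderburn}. First I would apply Theorem~\ref{TheoremDerAutConnection}: since $\mathfrak g$ is finite dimensional semisimple and acts on $L$ by derivations, there is a simply connected semisimple affine algebraic group $G$ over $F$ whose Lie algebra is $\mathfrak g$, together with a rational action of $G$ on $L$ by automorphisms whose differential is the given $\mathfrak g$-action, and having the property that a subspace of $L$ is a $\mathfrak g$-submodule if and only if it is a $G$-invariant subspace. In particular the solvable radical $R$ of $L$, being a characteristic ideal, is $G$-invariant (indeed invariant under all automorphisms of $L$), hence also a $\mathfrak g$-submodule.

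Next, a simply connected semisimple algebraic group is reductive, and the action of $G$ is by automorphisms only (so $G_0 = G$ in the notation of Subsection~\ref{SubsectionGDecomp}). Therefore Theorem~\ref{TheoremAffAlgGrLevi}, equivalently \cite[Theorem~5]{ASGordienko4}, applies to the $G$-algebra $L$ and yields a maximal semisimple subalgebra $B \subseteq L$, which is $G$-invariant, with $L = B \oplus R$ a direct sum of $G$-invariant subspaces; here the ideal complementing $B$ in the $G$-invariant Levi decomposition is forced to be the solvable radical $R$ by the ordinary Levi Theorem (and by a dimension count, using that $R$ is itself $G$-invariant as noted above).

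Finally, I would translate the conclusion back through part~(3) of Theorem~\ref{TheoremDerAutConnection}: the $G$-invariant subspaces $B$ and $R$ are precisely $\mathfrak g$-submodules of $L$, so $L = B \oplus R$ is a direct sum of $\mathfrak g$-submodules with $B$ a maximal semisimple subalgebra and $R$ the solvable radical, which is exactly the assertion.

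I do not expect a serious mathematical obstacle: the substance is already packaged in Theorem~\ref{TheoremDerAutConnection} and Theorem~\ref{TheoremAffAlgGrLevi}. The only points requiring care are bookkeeping ones: checking that the hypotheses of the cited Levi Theorem are genuinely met (that $G$ is reductive, which holds since it is semisimple, and that its action is rational, which is guaranteed by Theorem~\ref{TheoremDerAutConnection}), and confirming that the solvable radical of $L$ is indeed the ideal occurring in the $G$-invariant decomposition. There is no circularity, since Theorem~\ref{TheoremDerAutConnection} is established independently of any Levi-type result, using only the representation theory of $G$ and of $\mathfrak g$ and the fact that the multiplication tensor is $\mathfrak g$-invariant, hence $G$-invariant.
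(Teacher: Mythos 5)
Your proposal is correct and follows exactly the route the paper intends: the paper's own (one-line) proof of Theorem~\ref{TheoremDerLevi} is precisely to combine Theorem~\ref{TheoremDerAutConnection} with \cite[Theorem~5]{ASGordienko4} (equivalently Theorem~\ref{TheoremAffAlgGrLevi}), just as for Theorem~\ref{TheoremDerWedderburn}. You have merely filled in the bookkeeping (reductivity of $G$, rationality, $G$-invariance of the solvable radical, and the translation back via part~(3)) that the paper leaves implicit.
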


\section{Polynomial $H$-identities, identities with derivations, and their codimensions}\label{SectionDerH}

We introduce polynomial identities with derivations as a particular case of polynomial
$H$-identities.

An algebra $A$
over a field $F$
is an \textit{$H$-module algebra}
or an \textit{algebra with an $H$-action},
if $A$ is endowed with a homomorphism $H \to \End_F(A)$ such that
$h(ab)=(h_{(1)}a)(h_{(2)}b)$
for all $h \in H$, $a,b \in A$. Here we use Sweedler's notation
$\Delta h = h_{(1)} \otimes h_{(2)}$ where $\Delta$ is the comultiplication
in $H$.
We refer the reader to~\cite{Danara, Montgomery, Sweedler}
   for an account
  of Hopf algebras and algebras with Hopf algebra actions.
  
    \subsection{Polynomial $H$-identities of $H$-module Lie algebras} 
  Let $F \lbrace X \rbrace$ be the absolutely free nonassociative algebra
   on the set $X := \lbrace x_1, x_2, x_3, \ldots \rbrace$.
  Then $F \lbrace X \rbrace = \bigoplus_{n=1}^\infty F \lbrace X \rbrace^{(n)}$
  where $F \lbrace X \rbrace^{(n)}$ is the linear span of all monomials of total degree $n$.
   Let $H$ be a Hopf algebra over a field $F$. Consider the algebra $$F \lbrace X | H\rbrace
   :=  \bigoplus_{n=1}^\infty H^{{}\otimes n} \otimes F \lbrace X \rbrace^{(n)}$$
   with the multiplication $(u_1 \otimes w_1)(u_2 \otimes w_2):=(u_1 \otimes u_2) \otimes w_1w_2$
   for all $u_1 \in  H^{{}\otimes j}$, $u_2 \in  H^{{}\otimes k}$,
   $w_1 \in F \lbrace X \rbrace^{(j)}$, $w_2 \in F \lbrace X \rbrace^{(k)}$.
We use the notation $$x^{h_1}_{i_1}
x^{h_2}_{i_2}\ldots x^{h_n}_{i_n} := (h_1 \otimes h_2 \otimes \ldots \otimes h_n) \otimes x_{i_1}
x_{i_2}\ldots x_{i_n}$$ (the arrangements of brackets on $x_{i_j}$ and on $x^{h_j}_{i_j}$
are the same). Here $h_1 \otimes h_2 \otimes \ldots \otimes h_n \in H^{{}\otimes n}$,
$x_{i_1} x_{i_2}\ldots x_{i_n} \in F \lbrace X \rbrace^{(n)}$. 

Note that if $(\gamma_\beta)_{\beta \in \Lambda}$ is a basis in $H$, 
then $F \lbrace X | H\rbrace$ is isomorphic to the absolutely free nonassociative algebra over $F$ with free formal  generators $x_i^{\gamma_\beta}$, $\beta \in \Lambda$, $i \in \mathbb N$.
 
    Define on $F \lbrace X | H\rbrace$ the structure of a left $H$-module
   by $$h\,(x^{h_1}_{i_1}
x^{h_2}_{i_2}\ldots x^{h_n}_{i_n})=x^{h_{(1)}h_1}_{i_1}
x^{h_{(2)}h_2}_{i_2}\ldots x^{h_{(n)}h_n}_{i_n},$$
where $h_{(1)}\otimes h_{(2)} \otimes \ldots \otimes h_{(n)}$
is the image of $h$ under the comultiplication $\Delta$
applied $(n-1)$ times, $h\in H$. Then $F \lbrace X | H\rbrace$ is \textit{the absolutely free $H$-module nonassociative algebra} on $X$, i.e. for each map $\psi \colon X \to A$ where $A$ is an $H$-module algebra,
there exists a unique homomorphism $\bar\psi \colon 
F \lbrace X | H\rbrace \to A$ of algebras and $H$-modules, such that $\bar\psi\bigl|_X=\psi$.
Here we identify $X$ with the set $\lbrace x^1_j \mid j \in \mathbb N\rbrace \subset F \lbrace X | H\rbrace$.

Consider the $H$-invariant ideal $I$ in $F\lbrace X | H \rbrace$
generated by the set \begin{equation}\label{EqSetOfHGen}
\bigl\lbrace u(vw)+v(wu)+w(uv) \mid u,v,w \in  F\lbrace X | H \rbrace\bigr\rbrace \cup\bigl\lbrace u^2 \mid u \in  F\lbrace X | H \rbrace\bigr\rbrace.
\end{equation}
 Then $L(X | H) := F\lbrace X | H \rbrace/I$
is \textit{the free $H$-module Lie algebra}
on $X$, i.e. for any $H$-module Lie algebra $L$ 
and a map $\psi \colon X \to L$, there exists a unique homomorphism $\bar\psi \colon L(X | H) \to L$
of algebras and $H$-modules such that $\bar\psi\bigl|_X =\psi$. 
 We refer to the elements of $L(X | H)$ as \textit{Lie $H$-polynomials}.


\begin{remark} If $H$ is cocommutative and $\ch F \ne 2$, then $L(X | H)$ is the ordinary
free Lie algebra with free generators $x_i^{\gamma_\beta}$, $\beta \in \Lambda$, $i \in \mathbb N$
where   $(\gamma_\beta)_{\beta \in \Lambda}$ is a basis in $H$, since the ordinary ideal of 
$F\lbrace X | H \rbrace$ generated by~(\ref{EqSetOfHGen})
is already $H$-invariant.
However, if $h_{(1)} \otimes h_{(2)} \ne h_{(2)} \otimes h_{(1)}$ for some $h \in H$,
we still have $$[x^{h_{(1)}}_i, x^{h_{(2)}}_j]=h[x_i, x_j]=-h[x_j, x_i]=-[x^{h_{(1)}}_j, x^{h_{(2)}}_i]
= [x^{h_{(2)}}_i, x^{h_{(1)}}_j]$$ in $L(X | H)$ for all $i,j \in\mathbb N$,
i.e. in the case $h_{(1)} \otimes h_{(2)} \ne h_{(2)} \otimes h_{(1)}$ the algebra $L(X | H)$ is not free as an ordinary Lie algebra.
\end{remark}

Let $L$ be an $H$-module Lie algebra for
some Hopf algebra $H$ over a field $F$.
 An $H$-polynomial
 $f \in L ( X | H )$
 is a \textit{$H$-identity} of $L$ if $\psi(f)=0$
for all homomorphisms $\psi \colon L(X|H) \to L$
of algebras and $H$-modules. In other words, $f(x_1, x_2, \ldots, x_n)$
 is a polynomial $H$-identity of $L$
if and only if $f(a_1, a_2, \ldots, a_n)=0$ for any $a_i \in L$.
 In this case we write $f \equiv 0$.
The set $\Id^H(L)$ of all polynomial $H$-identities
of $L$ is an $H$-invariant ideal of $L(X|H)$.

Denote by $V^H_n$ the space of all multilinear Lie $H$-polynomials
in $x_1, \ldots, x_n$, $n\in\mathbb N$, i.e.
$$V^{H}_n = \langle [x^{h_1}_{\sigma(1)},
x^{h_2}_{\sigma(2)}, \ldots, x^{h_n}_{\sigma(n)}]
\mid h_i \in H, \sigma\in S_n \rangle_F \subset L( X | H ).$$
Then the number $c^H_n(L):=\dim\left(\frac{V^H_n}{V^H_n \cap \Id^H(L)}\right)$
is called the $n$th \textit{codimension of polynomial $H$-identities}
or the $n$th \textit{$H$-codimension} of $L$.

\subsection{Polynomial $H$-identities of associative algebras with a generalized $H$-action}  
  In the case of associative algebras we need a more general definition.
  Let $H$ be an arbitrary associative algebra with $1$ over a field $F$.
We say that an associative algebra $A$ is an algebra with a \textit{generalized $H$-action}
if $A$ is endowed with a homomorphism $H \to \End_F(A)$
and for every $h \in H$ there exist $h'_i, h''_i, h'''_i, h''''_i \in H$
such that 
$$
h(ab)=\sum_i\bigl((h'_i a)(h''_i b) + (h'''_i b)(h''''_i a)\bigr) \text{ for all } a,b \in A.
$$

Let $F \langle X \rangle$ be the free associative algebra without $1$
   on the set $X := \lbrace x_1, x_2, x_3, \ldots \rbrace$.
  Then $F \langle X \rangle = \bigoplus_{n=1}^\infty F \langle X \rangle^{(n)}$
  where $F \langle X \rangle^{(n)}$ is the linear span of all monomials of total degree $n$.
   Let $H$ be an arbitrary associative algebra with $1$ over $F$. Consider the algebra $$F \langle X | H\rangle
   :=  \bigoplus_{n=1}^\infty H^{{}\otimes n} \otimes F \langle X \rangle^{(n)}$$
   with the multiplication $(u_1 \otimes w_1)(u_2 \otimes w_2):=(u_1 \otimes u_2) \otimes w_1w_2$
   for all $u_1 \in  H^{{}\otimes j}$, $u_2 \in  H^{{}\otimes k}$,
   $w_1 \in F \langle X \rangle^{(j)}$, $w_2 \in F \langle X \rangle^{(k)}$.
We use the notation $$x^{h_1}_{i_1}
x^{h_2}_{i_2}\ldots x^{h_n}_{i_n} := (h_1 \otimes h_2 \otimes \ldots \otimes h_n) \otimes x_{i_1}
x_{i_2}\ldots x_{i_n}.$$ Here $h_1 \otimes h_2 \otimes \ldots \otimes h_n \in H^{{}\otimes n}$,
$x_{i_1} x_{i_2}\ldots x_{i_n} \in F \langle X \rangle^{(n)}$. 

Note that if $(\gamma_\beta)_{\beta \in \Lambda}$ is a basis in $H$, 
then $F\langle X | H \rangle$ is isomorphic to the free associative algebra over $F$ with free formal  generators $x_i^{\gamma_\beta}$, $\beta \in \Lambda$, $i \in \mathbb N$.
 We refer to the elements
 of $F\langle X | H \rangle$ as \textit{associative $H$-polynomials}.
Note that here we do not consider any $H$-action on $F \langle X | H \rangle$.

Let $A$ be an associative algebra with a generalized $H$-action.
Any map $\psi \colon X \to A$ has a unique homomorphic extension $\bar\psi
\colon F \langle X | H \rangle \to A$ such that $\bar\psi(x_i^h)=h\psi(x_i)$
for all $i \in \mathbb N$ and $h \in H$.
 An $H$-polynomial
 $f \in F\langle X | H \rangle$
 is an \textit{$H$-identity} of $A$ if $\bar\psi(f)=0$
for all maps $\psi \colon X \to A$. In other words, $f(x_1, x_2, \ldots, x_n)$
 is an $H$-identity of $A$
if and only if $f(a_1, a_2, \ldots, a_n)=0$ for any $a_i \in A$.
 In this case we write $f \equiv 0$.
The set $\Id^{H}(A)$ of all $H$-identities
of $A$ is an ideal of $F\langle X | H \rangle$.

We denote by $P^H_n$ the space of all multilinear $H$-polynomials
in $x_1, \ldots, x_n$, $n\in\mathbb N$, i.e.
$$P^{H}_n = \langle x^{h_1}_{\sigma(1)}
x^{h_2}_{\sigma(2)}\ldots x^{h_n}_{\sigma(n)}
\mid h_i \in H, \sigma\in S_n \rangle_F \subset F \langle X | H \rangle.$$
Then the number $c^H_n(A):=\dim\left(\frac{P^H_n}{P^H_n \cap \Id^H(A)}\right)$
is called the $n$th \textit{codimension of polynomial $H$-identities}
or the $n$th \textit{$H$-codimension} of $A$.

\begin{remark}
One can treat polynomial $H$-identities of Lie and associative algebras as identities of nonassociative
algebras (i.e. use $F\lbrace X | H\rbrace$ instead of $F\langle X | H\rangle$ and $L(X | H)$) and define their codimensions.
However these codimensions will coincide since the $n$th $H$-codimension of $A$
equals the dimension of the subspace in $\Hom_F(A^{{}\otimes n}; A)$ that consists of those $n$-linear functions that can be represented by $H$-polynomials.
\end{remark}

\begin{theorem}\label{TheoremAssGenHopf}
Let $A$ be a finite dimensional non-nilpotent associative algebra
with a generalized $H$-action
over an algebraically closed field $F$ of characteristic $0$.  Here $H$ is an
associative algebra with $1$, not necessarily finite dimensional, acting on $A$ in such a way that the Jacobson radical $J:=J(A)$
is $H$-invariant and $A = B \oplus J$
(direct sum of $H$-submodules) where $B=B_1 \oplus \ldots \oplus B_q$ (direct sum of $H$-invariant ideals),
$B_i$ are $H$-simple semisimple algebras.
 Then there exist constants $C_1, C_2 > 0$, $r_1, r_2 \in \mathbb R$ such that $$C_1 n^{r_1} d^n \leqslant c^{H}_n(A) \leqslant C_2 n^{r_2} d^n \text{ for all }n \in \mathbb N.$$
Here
$$ d:= \max(\dim(
B_{i_1} \oplus B_{i_2} \oplus \ldots \oplus
B_{i_r}) \mid B_{i_1}J B_{i_2}J
\ldots JB_{i_r}\ne 0,$$ \begin{equation}\label{EqPIexp} 1 \leqslant i_k \leqslant q,
  1 \leqslant k \leqslant r;\
0 \leqslant r \leqslant q )
.\end{equation}
 \end{theorem}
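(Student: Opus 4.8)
The plan is to reduce the problem to the known ordinary (non-equivariant) codimension asymptotics, using the fact that the relevant quantity $d$ is precisely the ordinary PI-exponent of $A$ as computed by the Giambruno--Zaicev machinery, and then to transfer the two inequalities separately. For the \emph{upper bound}, I would first observe that $P^H_n$ maps, via $x_i^h \mapsto h x_i$, into the space $\Hom_F(A^{\otimes n};A)$ of $n$-linear functions, so that $c^H_n(A)$ is bounded above by the dimension of the space of $n$-linear functions on $A$ expressible by $H$-polynomials; replacing the action of each $h\in H$ by its finitely many ``components'' $h'_i,h''_i,h'''_i,h''''_i$ coming from the generalized $H$-action, one sees that every multilinear $H$-monomial evaluates on $A$ to a linear combination of ordinary multilinear words in which the variables are read either left-to-right or with some reversals dictated by the anti-automorphism-like terms. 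The key point is that this only multiplies the ordinary multilinear space by a factor polynomial in $n$ (at worst $2^n$ is avoided because the number of ``reversal patterns'' contributing a genuinely new function is controlled once one fixes the Wedderburn decomposition $A=B\oplus J$). Concretely, I would invoke the upper bound established in Section~\ref{SectionSncocharAndUpper} (referenced in the excerpt) together with the structural decomposition $A=B\oplus J$, $B=\bigoplus B_i$, to get $c^H_n(A)\le C_2 n^{r_2} d^n$ with $d$ as in~(\ref{EqPIexp}).

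For the \emph{lower bound}, the strategy is the standard one: exhibit a non-vanishing multilinear $H$-polynomial of each degree built from an ``optimal'' alternating construction on a subalgebra $B_{i_1}\oplus\cdots\oplus B_{i_r}$ with $B_{i_1}JB_{i_2}J\cdots JB_{i_r}\ne 0$. First I would pick indices realizing the maximum in the definition of $d$, fix nonzero elements $w_1\in B_{i_1}JB_{i_2}, \ldots$ witnessing $B_{i_1}JB_{i_2}J\cdots JB_{i_r}\ne 0$, and then inside each semisimple block $B_{i_k}\cong M_{n_k}(F)$ use the fact that the full matrix algebra has a non-trivial central polynomial and satisfies the Regev-type lower bound for ordinary codimensions. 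Assembling these via the radical elements $w_j$ (which play the role of ``glue'' between blocks) produces, for all large $n$, a multilinear polynomial on $\sum_k \dim B_{i_k}=d$ ``generic'' slots that is not an identity; counting the number of essentially distinct such polynomials gives $c^H_n(A)\ge C_1 n^{r_1} d^n$. Here $H$-linearity is harmless: since we only ever substitute elements of $A$ and never use the $H$-action nontrivially in the test polynomial, the ordinary lower bound transfers verbatim. This is exactly the argument used in~\cite[Theorem~5]{ASGordienko3} for finite-dimensional $H$; the content of the present theorem is that \emph{finite-dimensionality of $H$ is never used} in either bound, only the generalized $H$-action axiom and the $H$-invariant Wedderburn decomposition, both of which are hypotheses here.

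The step I expect to be the main obstacle is the \emph{upper bound}, specifically controlling the contribution of the anti-automorphism-type terms $(h'''_i b)(h''''_i a)$ in the generalized action: a priori, resolving each $h$ acting on a product introduces a binary choice (``ordered'' vs.\ ``reversed''), and iterating this over the $n$ letters of a monomial threatens a spurious factor growing like $C^n$ with $C>1$, which would spoil the exponent $d$. The resolution is that after passing to $A=B\oplus J$ and using nilpotency of $J$, any multilinear word of degree $n$ with more than $\dim A$ ``radical transitions'' vanishes, so only boundedly many (in fact polynomially many in $n$) reversal patterns survive on each fixed supporting set of blocks; this is where the nilpotency bound on $J$ and the explicit form of $d$ in~(\ref{EqPIexp}) enter, and it is the technical heart of Section~\ref{SectionSncocharAndUpper}. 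Once that is in place, combining with the ordinary Giambruno--Zaicev asymptotics $c_n(A)\sim C n^{r} (\exp(A))^n$ and the identification $\exp(A)=d$ finishes the proof, and simultaneously yields the remark that the differential and $G$-PI-exponents coincide with the ordinary one.
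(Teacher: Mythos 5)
Your proposal takes a genuinely different route from the paper, and as written it has two real problems. The paper's proof is a one-paragraph reduction: let $\zeta \colon H \to \End_F(A)$ be the structure map, note that $\zeta(H)$ is finite dimensional because $A$ is, check that the substitution $x^h \mapsto x^{\zeta(h)}$ identifies $P^H_n/(P^H_n\cap\Id^H(A))$ with $P^{\zeta(H)}_n/(P^{\zeta(H)}_n\cap\Id^{\zeta(H)}(A))$ so that $c^H_n(A)=c^{\zeta(H)}_n(A)$, and then invoke the already-proved finite-dimensional case of \cite[Theorem~5]{ASGordienko3} verbatim. This is the ``easy trick'' announced in the introduction. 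You instead propose to re-run the entire Giambruno--Zaicev-style argument and assert that finite-dimensionality of $H$ ``is never used in either bound''; that assertion is exactly what needs justification (for instance, $P^H_n$ itself is infinite dimensional when $\dim H=\infty$, so every counting argument over $H$-monomials in \cite{ASGordienko3} has to be revisited), and the $\zeta(H)$ reduction is precisely the clean way to avoid doing so.

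The more serious gap is your identification of $d$ with the ordinary PI-exponent $\PIexp(A)$. This is false in general: the blocks $B_i$ in the hypothesis are $H$-simple, not simple, and an $H$-simple semisimple algebra can be a direct sum of several simple ideals (e.g.\ $F\oplus F$ with the flip action of $\mathbb Z_2$ is $G$-simple of dimension $2$ but has ordinary exponent $1$). Consequently $d$ can strictly exceed $\PIexp(A)$, and your lower-bound construction --- which explicitly ``never uses the $H$-action nontrivially in the test polynomial'' --- can only certify growth at the rate of the ordinary exponent, not $d$. To reach $d$ one must alternate over full $H$-simple components, which requires $H$-polynomials in an essential way. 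The equality $\PIexp^{H}(A)=\PIexp(A)$ holds only in the special situations of Section~\ref{SectionPIExp} (a semisimple Lie algebra acting by derivations, or a connected reductive group), where Lemmas~\ref{LemmaDerSimple} and~\ref{LemmaGSimple} show that $H$-simple forces simple; it is not available under the hypotheses of Theorem~\ref{TheoremAssGenHopf}.
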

 \begin{proof}
This theorem was proved in~\cite[Theorem~5]{ASGordienko3} under the hypothesis~$\dim H < +\infty$. We now show how to remove this restriction.
 
 Denote by $\zeta \colon H \to \End_F(A)$
the homomorphism corresponding to the $H$-action.
 Then $A$ is an algebra with a generalized $\zeta(H)$-action, and $B_i$ are $\zeta(H)$-simple.
 
 We claim that $c^H_n(A) = c^{\zeta(H)}_n(A)$ for all $n \in \mathbb N$. 
Let $\psi \colon F \langle X \mid H \rangle \to F \langle X \mid \zeta(H) \rangle$
be the homomorphism defined by $\psi(x^h)=x^{\zeta(h)}$, $h\in H$. Note that $\psi(P^H_n)=P^{\zeta(H)}_n$. Moreover $\psi(\Id^H(A))=\Id^{\zeta(H)}(A)$ since 
every $h\in H$ acts on $A$ by the operator~$\zeta(h)$.
Hence $$F\langle X \mid H \rangle/\Id^H(A) \cong F\langle X \mid \zeta(H) \rangle/\Id^{\zeta(H)}(A)$$ and $c^H_n(A) = c^{\zeta(H)}_n(A)$.

 We notice that $\dim \zeta(H) < +\infty$ and apply~\cite[Theorem~5]{ASGordienko3}
 to $\zeta(H)$-codimensions.
 \end{proof}
  
\subsection{Differential identities}
  Here we are interested in the following particular case.
 Suppose a Lie algebra $\mathfrak{g}$ is acting on a Lie or associative algebra $A$ 
 by derivations. Then $A$ is an $U(\mathfrak{g})$-module algebra
 where $U(\mathfrak{g})$ is the universal enveloping algebra of $\mathfrak{g}$,
 which is a Hopf algebra: the comultiplication $\Delta$ is defined by $\Delta(a)=1\otimes a + a \otimes 1$, the counit $\varepsilon$ is defined by $\varepsilon(a)=0$, and the antipode $S$ is defined by $Sa = -a$
 for all $a\in \mathfrak{g}$.
 The elements of $\Id^{U(\mathfrak{g})}(A)$ are called \textit{polynomial identities with derivations}
 or \textit{differential identities} of $A$  and $c^{U(\mathfrak{g})}_n(A)$ are called \textit{differential codimensions}.

\begin{example}
Consider the adjoint representation of $\mathfrak{gl}_2(F)$ on $M_2(F)$ and $\mathfrak{sl}_2(F)$.
Denote by $e_{ij}$ the matrix units.
Then $$x^{e_{11}}+x^{e_{22}} \in \Id^{U(\mathfrak{gl}_2(F))}(M_2(F)),\ \Id^{U(\mathfrak{gl}_2(F))}(\mathfrak{sl}_2(F))$$
since $a^{e_{11}}+a^{e_{22}}=[e_{11},a]+[e_{22},a]=[e_{11}+e_{22},a]=0$
for all $a \in M_2(F)$.
\end{example}

The analog of Amitsur's conjecture for codimensions of polynomial identities with derivations can be formulated
as follows.

\begin{conjecture} Let $A$ be a Lie or associative algebra with an action of a Lie algebra $\mathfrak g$
by derivations. Then there exists
 $\PIexp^{U(\mathfrak{g})}(A):=\lim\limits_{n\to\infty}
 \sqrt[n]{c^{U(\mathfrak{g})}_n(A)} \in \mathbb Z_+$.
\end{conjecture}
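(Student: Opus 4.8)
The plan is to prove the statement in the form in which it is actually needed here, namely for $A$ finite dimensional and $\mathfrak g$ finite dimensional semisimple, by reducing it to the codimension theorems already available for $H$-module and $G$-algebras. The bridge is Theorem~\ref{TheoremDerAutConnection}: it integrates the $\mathfrak g$-action by derivations to a rational action of a simply connected semisimple (hence reductive, hence in particular connected) affine algebraic group $G$ with Lie algebra $\mathfrak g$, acting by \emph{automorphisms}, and having the same invariant subspaces in $A$ as $\mathfrak g$.

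For the associative case I would argue as follows. If $A$ is nilpotent, then $c^{U(\mathfrak g)}_n(A)=0$ for large $n$ and $\PIexp^{U(\mathfrak g)}(A)=0$, so assume $A$ is not nilpotent. The derivation action makes $A$ an algebra with a generalized $U(\mathfrak g)$-action (take $h'_i=h_{(1)}$, $h''_i=h_{(2)}$, $h'''_i=h''''_i=0$). By Theorem~\ref{TheoremDerWedderburn}, $A=B\oplus J$ as a direct sum of $\mathfrak g$-submodules with $J=J(A)$ and $B$ semisimple; write $B=B_1\oplus\dots\oplus B_q$ for the decomposition of $B$ into simple components. Since $G$ is connected and acts by automorphisms, it cannot permute the finitely many minimal ideals of $B$ nontrivially, so each $B_i$ is $G$-invariant, hence a $\mathfrak g$-submodule, and, being simple as an algebra, is $\mathfrak g$-simple. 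Thus all hypotheses of Theorem~\ref{TheoremAssGenHopf} hold with $H=U(\mathfrak g)$, and it yields $C_1 n^{r_1} d^n\leqslant c^{U(\mathfrak g)}_n(A)\leqslant C_2 n^{r_2} d^n$ with $d$ as in~(\ref{EqPIexp}); non-nilpotency forces $q\geqslant 1$, hence $d\geqslant\dim B_1\geqslant 1$, so taking $n$-th roots gives $\PIexp^{U(\mathfrak g)}(A)=d\in\mathbb Z_+$. This $d$ is visibly the ordinary Giambruno--Zaicev exponent of $A$, because connectedness of $G$ keeps the simple components of $B$ from merging; this is the source of the remark in the introduction that the differential and the ordinary PI-exponent coincide.

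For the Lie case I would run the analogous reduction, this time to the Lie-algebra analog of Amitsur's conjecture for $G$-algebras proved in~\cite{ASGordienko5}. Via Theorem~\ref{TheoremDerAutConnection}, $L$ carries a rational action of the reductive group $G$ by automorphisms. The extra point to check is that $c^{U(\mathfrak g)}_n(L)$ equals the $n$-th $G$-codimension $c^{FG}_n(L)$ for all $n$: by the remark following Theorem~\ref{TheoremAssGenHopf}, each of these equals the dimension of the space of $n$-linear maps $L^{\times n}\to L$ obtainable from the bracket and from the operators applied to single arguments, and these operators run over $\zeta(U(\mathfrak g))$ on the one side and over $\rho(FG)$ on the other, where $\zeta\colon U(\mathfrak g)\to\End_F(L)$ and $\rho\colon FG\to\End_F(L)$ are the structure maps; so it suffices to see that $\zeta(U(\mathfrak g))=\rho(FG)$. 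But $\zeta(U(\mathfrak g))$ is a subspace of $\End_F(L)$ that contains $1$ and is stable under left multiplication by $\zeta(\mathfrak g)$, hence, by the connectedness argument (and \cite[Theorem~13.2]{HumphreysAlgGr}) used in the proof of Theorem~\ref{TheoremDerAutConnection}, it is stable under left multiplication by $\rho(G)$ and therefore contains $\rho(FG)$; symmetrically, $\rho(FG)$ is a $G$-submodule for the left-multiplication action, hence stable under left multiplication by $\zeta(\mathfrak g)$, and, containing $1$, it contains $\zeta(U(\mathfrak g))$. Combining $c^{U(\mathfrak g)}_n(L)=c^{FG}_n(L)$ with the estimates of~\cite{ASGordienko5} (valid for every finite dimensional Lie algebra, solvable ones included) gives $\PIexp^{U(\mathfrak g)}(L)=\PIexp^{G}(L)\in\mathbb Z_+$.

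I expect the main obstacle to be exactly the passage from derivations to automorphisms, i.e., Theorem~\ref{TheoremDerAutConnection}, together with the removal of the finite-dimensionality hypothesis on the Hopf algebra in~\cite[Theorem~5]{ASGordienko3} and in its Lie analog: the former is handled by the $\zeta$-trick already recorded in the proof of Theorem~\ref{TheoremAssGenHopf}, the latter by the identification $\zeta(U(\mathfrak g))=\rho(FG)$ above. Once these are in place, the conjecture (for finite dimensional $A$ and finite dimensional semisimple $\mathfrak g$) follows formally from~\cite{ASGordienko3} and~\cite{ASGordienko5}.
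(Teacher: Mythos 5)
Your proposal is correct, and its skeleton coincides with the paper's: the decisive step is Theorem~\ref{TheoremDerAutConnection}, integrating the semisimple derivation action to a rational action of a connected (simply connected semisimple) group $G$ by automorphisms. Where you diverge is in how the reduction is completed. The paper proves a single clean identity, $c_n^{U(\mathfrak g)}(A)=c_n^G(A)$ (Lemma~\ref{LemmaCodimGUg}), by viewing $A$ as an $\mathcal O(G)$-comodule and showing that $FG$ and $U(\mathfrak g)$ have dense images in $\mathcal O(G)^*$, hence act by the same operators (Lemmas~\ref{LemmaKochetov} and~\ref{LemmaUgDense}); it then quotes Theorems~\ref{TheoremLieGAffAlg}, \ref{TheoremLieGAffAlgSum}, \ref{TheoremAssAffAlgG}, \ref{TheoremAssGAffAlgSum}. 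You instead split the two cases: in the associative case you bypass the codimension identity altogether and verify the hypotheses of Theorem~\ref{TheoremAssGenHopf} directly for $H=U(\mathfrak g)$ (using Theorem~\ref{TheoremDerWedderburn} and the $\mathfrak g$-invariance of the simple components of $B$ --- which, incidentally, the paper gets more cheaply from Lemma~\ref{LemmaDerSimpleSum}, with no appeal to connectedness of $G$); in the Lie case you prove $\zeta(U(\mathfrak g))=\rho(FG)$ inside $\End_F(L)$ by applying \cite[Theorem~13.2]{HumphreysAlgGr} to the left-multiplication action on $\End_F(L)$, which is a correct and somewhat more elementary substitute for the coalgebra-density argument. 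Both routes are valid; the paper's buys uniformity (one lemma serving the associative, Lie, and the later graded/$G$-sections), while yours makes the associative case shorter and keeps the Lie case free of $\mathcal O(G)^*$. Two small points to tidy up: you should state the reduction to algebraically closed $F$ explicitly (Theorems~\ref{TheoremDerAutConnection} and~\ref{TheoremDerWedderburn} need it, and the conjecture is posed over an arbitrary field of characteristic $0$; the paper dispatches this with the standard remark that $H$-codimensions are stable under base field extension), and note that the conjecture as literally stated allows arbitrary $A$ and $\mathfrak g$, whereas what is actually proved --- here and in the paper --- is the finite dimensional, $\mathfrak g$ semisimple case asserted in Theorem~\ref{TheoremDer}.
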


\begin{remark}
I.B.~Volichenko~\cite{Volichenko} gave an example
of an infinite dimensional Lie algebra~$L$ with
a nontrivial polynomial identity for which the
 growth of codimensions~$c_n(L)$ of ordinary polynomial identities
 is overexponential.
 M.V.~Zaicev and S.P.~Mishchenko~\cite{VerZaiMishch, ZaiMishch}
  gave an example
of an infinite dimensional Lie algebra~$L$
 with a nontrivial polynomial identity
 such that
there exists fractional
  $\PIexp(L):=\lim\limits_{n\to\infty} \sqrt[n]{c_n(L)}$.
\end{remark}

We claim that the following theorems hold:

\begin{theorem}\label{TheoremDer}
Let $A$ be a finite dimensional non-nilpotent Lie or associative algebra
over an field $F$ of characteristic $0$. Suppose a finite dimensional
semisimple Lie algebra $\mathfrak g$ acts on $A$ by derivations.
 Then there exist constants $C_1, C_2 > 0$, $r_1, r_2 \in \mathbb R$, $d \in \mathbb N$ such that $C_1 n^{r_1} d^n \leqslant c^{U(\mathfrak{g})}_n(A) \leqslant C_2 n^{r_2} d^n$ for all $n \in \mathbb N$.
\end{theorem}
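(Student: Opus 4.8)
The plan is to derive Theorem~\ref{TheoremDer} from the structure theorems of Section~2 together with Theorem~\ref{TheoremAssGenHopf} (in the associative case) and the known asymptotics of $G$-codimensions of finite dimensional Lie algebras (in the Lie case), after first reducing to an algebraically closed base field. For this reduction, let $K$ be an algebraic closure of $F$; then $U(\mathfrak{g}\otimes_F K)\cong U(\mathfrak{g})\otimes_F K$, the Lie algebra $\mathfrak{g}\otimes_F K$ is again finite dimensional and semisimple (since $\ch F=0$ and nondegeneracy of the Killing form is preserved under base extension), it acts on $A\otimes_F K$ by derivations, and $A\otimes_F K$ is non-nilpotent whenever $A$ is. Moreover $c^{U(\mathfrak{g})}_n(A)=c^{U(\mathfrak{g}\otimes_F K)}_n(A\otimes_F K)$ for all $n$: by the remark that $c^H_n(A)$ equals the dimension of the subspace of $\Hom_F(A^{\otimes n},A)$ consisting of the $n$-linear maps representable by $H$-polynomials, $c^{U(\mathfrak{g})}_n(A)$ is the dimension of the $F$-span $W_n\subseteq\Hom_F(A^{\otimes n},A)$ of such maps coming from multilinear $U(\mathfrak{g})$-polynomials, and since every multilinear $U(\mathfrak{g}\otimes_F K)$-polynomial is a $K$-linear combination of multilinear $U(\mathfrak{g})$-polynomials, the corresponding $K$-span is $W_n\otimes_F K$, of the same dimension. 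Hence we may assume $F$ is algebraically closed.

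For the associative case, Theorem~\ref{TheoremDerWedderburn} gives $A=B\oplus J$ (a direct sum of $\mathfrak{g}$-submodules), where $J:=J(A)$ and $B$ is a maximal semisimple subalgebra; write $B=B_1\oplus\dots\oplus B_q$ for its decomposition into simple components, and note $q\geqslant 1$ since $A$ is non-nilpotent. Each $B_i$ is a two-sided ideal of $B$ with $B_i^2=B_i$, so $\delta(B_i)=\delta(B_iB_i)\subseteq(\delta B_i)B_i+B_i(\delta B_i)\subseteq B_i$ for every $\delta\in\mathfrak{g}$; thus $B_i$ is a $U(\mathfrak{g})$-submodule, and being a simple algebra it has no proper nonzero ideals at all, hence is $U(\mathfrak{g})$-simple. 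Since the Jacobson radical $J$ is invariant under every derivation, and a $U(\mathfrak{g})$-module algebra is in particular an algebra with a generalized $U(\mathfrak{g})$-action, all the hypotheses of Theorem~\ref{TheoremAssGenHopf} are satisfied with $H=U(\mathfrak{g})$ --- precisely the situation in which that theorem no longer requires $H$ to be finite dimensional. It therefore yields the asserted bounds with $d$ as in~(\ref{EqPIexp}); taking $r=1$ there shows $d\geqslant\dim B_1\geqslant 1$, so $d\in\mathbb N$.

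For the Lie case I would pass from $\mathfrak{g}$ to an algebraic group. By Theorem~\ref{TheoremDerAutConnection} there is a rational representation of a simply connected semisimple (hence reductive) affine algebraic group $G$ on $L$ by automorphisms whose differential is the given $\mathfrak{g}$-action and for which $G$ and $\mathfrak{g}$ have the same invariant subspaces. Let $\mathcal{A}\subseteq\End_F(L)$ be the image of $U(\mathfrak{g})$ and let $\mathcal{B}\subseteq\End_F(L)$ be the linear span of the image of $G$; both are unital associative subalgebras. The commutant of $\mathcal{A}$ in $\End_F(L)$ is $\End_{\mathfrak{g}}(L)$ and that of $\mathcal{B}$ is $\End_{G}(L)$, and these coincide because $G$ is connected --- this is the same principle (a connected algebraic group and its Lie algebra have the same fixed points in a rational module) on which Theorem~\ref{TheoremDerAutConnection} rests, applied to the conjugation action on $\End_F(L)$. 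Since $L$ is a completely reducible module over $\mathfrak{g}$ (Weyl's theorem) and over $G$ ($G$ reductive, $\ch F=0$), the double centralizer theorem identifies $\mathcal{A}$ with the commutant of $\End_{\mathfrak{g}}(L)$ in $\End_F(L)$ and $\mathcal{B}$ with the commutant of $\End_{G}(L)$ in $\End_F(L)$; as $\End_{\mathfrak{g}}(L)=\End_{G}(L)$, we get $\mathcal{A}=\mathcal{B}$. Consequently, by the same remark as above, the multilinear $U(\mathfrak{g})$-polynomial operators and the multilinear $FG$-polynomial operators span the same subspace of $\Hom_F(L^{\otimes n},L)$, so $c^{U(\mathfrak{g})}_n(L)=c^G_n(L)$ for all $n$. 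The desired two-sided bound, with $d\geqslant 1$ since $L$ is non-nilpotent, now follows from the analog of Amitsur's conjecture for $G$-codimensions of finite dimensional Lie algebras with a rational action of a reductive affine algebraic group~\cite{ASGordienko5}.

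The individual verifications above are routine; the step that requires genuine care is the identification $c^{U(\mathfrak{g})}_n(L)=c^G_n(L)$ in the Lie case (and the analogous, implicit fact in the associative case, that the $H$-codimensions depend only on the image of $H$ in $\End_F(A)$) --- that is, the assertion that replacing the acting algebra by the subalgebra of $\End_F(A)$ it generates, equivalently passing from $\mathfrak{g}$ to the connected group $G$, does not alter the codimension sequence. This is where connectedness of $G$, complete reducibility of the relevant modules, and the double centralizer theorem are used; everything else is bookkeeping on top of Theorems~\ref{TheoremDerAutConnection}, \ref{TheoremDerWedderburn}, and~\ref{TheoremAssGenHopf}.
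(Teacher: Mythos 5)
Your proposal is correct, and it reaches the same endpoints as the paper but by a partly different route, so a comparison is worthwhile. The paper's proof is uniform for the Lie and associative cases: after the same scalar extension, it applies Theorem~\ref{TheoremDerAutConnection} to convert the $\mathfrak g$-action into a $G$-action and then invokes Lemma~\ref{LemmaCodimGUg} to get $c^{U(\mathfrak g)}_n(A)=c^G_n(A)$, finishing by quoting Theorems~\ref{TheoremLieGAffAlg}, \ref{TheoremLieGAffAlgSum}, \ref{TheoremAssAffAlgG}, and~\ref{TheoremAssGAffAlgSum}. The proof of Lemma~\ref{LemmaCodimGUg} rests on viewing $A$ as an $\mathcal O(G)$-comodule and on the density of $U(\mathfrak g)$ in $\mathcal O(G)^*$ (Lemmas~\ref{LemmaKochetov} and~\ref{LemmaUgDense}). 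You diverge in two places. In the associative case you never pass to the group: Theorem~\ref{TheoremDerWedderburn} plus the $\delta$-invariance of the simple components of $B$ (this is exactly the paper's Lemma~\ref{LemmaDerSimpleSum}; note that your computation $\delta(B_i)=\delta(B_iB_i)\subseteq(\delta B_i)B_i+B_i(\delta B_i)\subseteq B_i$ silently uses that $B$ itself is a $\mathfrak g$-submodule, so that $\delta B_i\subseteq B$ and the products land in $BB_i\subseteq B_i$) feed directly into Theorem~\ref{TheoremAssGenHopf}. This is legitimate and in fact shorter, since the paper's own chain through Theorem~\ref{TheoremAssAffAlgG} ultimately bottoms out at Theorem~\ref{TheoremAssGenHopf} anyway. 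In the Lie case you follow the paper's skeleton but replace the coalgebra-density proof of $c^{U(\mathfrak g)}_n(L)=c^G_n(L)$ by a double-centralizer argument: this is valid here because $L$ is completely reducible over $\mathfrak g$ (Weyl) and over the reductive $G$, and because $\End_{\mathfrak g}(L)=\End_G(L)$ by the connectedness principle the paper already uses in Theorem~\ref{TheoremDerAutConnection}. What each approach buys: your argument is self-contained modulo classical facts and avoids the comodule formalism, while the paper's density argument (Lemma~\ref{LemmaKochetov}) makes no complete-reducibility assumption and therefore identifies the two operator algebras, hence the two codimension sequences, in greater generality. Your sketch of the scalar-extension step is also more explicit than the paper's, which simply asserts the analogy with the ordinary case.
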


\begin{remark}
If $A$ is nilpotent, i.e. $x_1 \ldots x_p\equiv 0$ for some $p\in\mathbb N$, then
$P^{U(\mathfrak{g})}_n \subseteq \Id^{U(\mathfrak{g})}(A)$ and $c^{U(\mathfrak{g})}_n(A)=0$ for all $n \geqslant p$.
\end{remark}

\begin{corollary}
The above analog of Amitsur's conjecture holds
 for such codimensions.
\end{corollary}

\begin{theorem}\label{TheoremDerSum}
Let $A=A_1 \oplus \ldots \oplus A_s$ (direct sum of ideals) be a finite dimensional Lie or associative
 algebra over a field $F$ of characteristic $0$.
Suppose a finite dimensional
semisimple Lie algebra $\mathfrak g$ acts on $A$ by derivations in such a way that $A_i$ are invariant. 
  Then $\PIexp^{U(\mathfrak{g})}(A)=\max_{1 \leqslant i \leqslant s}
\PIexp^{U(\mathfrak{g})}(A_i)$.
\end{theorem}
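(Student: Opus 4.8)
The plan is to sandwich $c_n^{U(\mathfrak g)}(A)$ between the codimension sequences of the summands and then pass to $n$-th roots. First I would record two preliminaries. Each ideal $A_i$ is simultaneously a $U(\mathfrak g)$-submodule and a subalgebra of $A$, hence an algebra with an action of $\mathfrak g$ by derivations in its own right; by Theorem~\ref{TheoremDer} together with the remark on nilpotent algebras, the exponents $\PIexp^{U(\mathfrak g)}(A_i)$ and $\PIexp^{U(\mathfrak g)}(A)$ all exist (the limit equals the base $d$ in the non-nilpotent case and $0$ in the nilpotent case, with the convention $\sqrt[n]{0}=0$). Also, $A$ is nilpotent if and only if every $A_i$ is, so the degenerate case in which all the quantities vanish is handled at once.

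For the lower bound, the projection $\pi_i\colon A\to A_i$ along the other summands is a surjective homomorphism of algebras and of $U(\mathfrak g)$-modules, so $\Id^{U(\mathfrak g)}(A)\subseteq\Id^{U(\mathfrak g)}(A_i)$ and therefore $c_n^{U(\mathfrak g)}(A)\geqslant c_n^{U(\mathfrak g)}(A_i)$ for every $n$ and every $i$. Taking $n$-th roots and letting $n\to\infty$ gives $\PIexp^{U(\mathfrak g)}(A)\geqslant\max_{1\leqslant i\leqslant s}\PIexp^{U(\mathfrak g)}(A_i)$.

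The upper bound rests on the following lemma, which I would prove by induction on the degree (and, in the Lie case, on the bracket length of the monomial): a multilinear $U(\mathfrak g)$-monomial evaluated on elements $a_1,\dots,a_n$ each lying in one of the $A_i$ is zero unless all the $a_j$ belong to the same summand, in which case its value again lies in that summand. This uses that $A_iA_j=0$ (respectively $[A_i,A_j]=0$) for $i\ne j$ and that every $h\in U(\mathfrak g)$ preserves each $A_i$, since $\mathfrak g$ does. Expanding an arbitrary substitution into its homogeneous components and using multilinearity, it follows that a multilinear $H$-polynomial with $H=U(\mathfrak g)$ is a differential identity of $A$ if and only if it is one of every $A_i$; equivalently $P_n^H\cap\Id^H(A)=\bigcap_{i=1}^s\bigl(P_n^H\cap\Id^H(A_i)\bigr)$. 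Hence the diagonal map embeds $P_n^H/(P_n^H\cap\Id^H(A))$ into $\bigoplus_{i=1}^s P_n^H/(P_n^H\cap\Id^H(A_i))$, so $c_n^H(A)\leqslant\sum_{i=1}^s c_n^H(A_i)\leqslant s\max_i c_n^H(A_i)$.

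Finally I would take $n$-th roots in this last inequality. Since $\sqrt[n]{s}\to 1$ and each $\lim_n\sqrt[n]{c_n^H(A_i)}$ exists, a standard argument (pass to a subsequence realizing $\limsup_n\sqrt[n]{\max_i c_n^H(A_i)}$ and refine it so that the index attaining the maximum is constant) yields $\limsup_n\sqrt[n]{\max_i c_n^H(A_i)}=\max_i\PIexp^H(A_i)$, and therefore $\PIexp^{U(\mathfrak g)}(A)\leqslant\max_{1\leqslant i\leqslant s}\PIexp^{U(\mathfrak g)}(A_i)$. Combined with the lower bound, this proves the theorem. The only nonroutine point is the multilinear monomial lemma in the Lie case, where one must verify that an iterated bracket whose leaves are spread over more than one ideal always collapses to $0$ regardless of the bracketing; this is the step I expect to require the most care, though it remains elementary.
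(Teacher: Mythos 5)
Your proof is correct, but it takes a genuinely different route from the paper. The paper disposes of Theorem~\ref{TheoremDerSum} in one stroke together with Theorem~\ref{TheoremDer}: after extending the base field, it uses Theorem~\ref{TheoremDerAutConnection} to convert the $\mathfrak g$-action into a rational action of a simply connected semisimple algebraic group $G$, invokes Lemma~\ref{LemmaCodimGUg} to identify $c_n^{U(\mathfrak g)}(A)$ with $c_n^G(A)$, and then quotes Theorems~\ref{TheoremLieGAffAlgSum} and~\ref{TheoremAssGAffAlgSum}, the latter being read off from the explicit exponent formula~(\ref{EqPIexp}). You instead give the classical direct argument: the inclusion $\Id^{U(\mathfrak g)}(A)\subseteq\Id^{U(\mathfrak g)}(A_i)$ yields $c_n^{U(\mathfrak g)}(A)\geqslant c_n^{U(\mathfrak g)}(A_i)$, while the vanishing of multilinear monomials with entries spread over distinct ideals (using $A_iA_j=0$, resp.\ $[A_i,A_j]=0$, and the $U(\mathfrak g)$-invariance of each $A_i$) gives $c_n^{U(\mathfrak g)}(A)\leqslant\sum_i c_n^{U(\mathfrak g)}(A_i)$, and the $n$-th root limit passes through because the individual exponents exist by Theorem~\ref{TheoremDer}. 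The paper's route costs nothing once the $G$-action machinery is in place and also exhibits the exponent via the structural formula; yours is more elementary and self-contained (needing Theorem~\ref{TheoremDer} only for existence of the limits, not for their values), avoids the passage to the algebraic closure entirely, and in fact works verbatim for any algebra with a generalized $H$-action whose ideals $A_i$ are $H$-invariant and whose exponents are known to exist. The one point you flag as delicate --- collapsing an arbitrarily bracketed Lie monomial with leaves in two different ideals --- is in fact immediate by induction, and is even simpler here because $V_n^{U(\mathfrak g)}$ is spanned by left-normed commutators.
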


Theorems~\ref{TheoremDer} and~\ref{TheoremDerSum} will be proved in Subsection~\ref{SubsectionGApplToDiff}.

\section{Polynomial $G$-identities and their codimensions}\label{SectionG}

\subsection{Definitions and theorems}
 
Let $G$ be a group
with a fixed (normal) subgroup $G_0$ of index ${}\leqslant 2$.
 Denote by $F\langle X | G \rangle$
the free associative algebra over $F$ with free formal generators $x^g_j$, $j\in\mathbb N$,
 $g \in G$. Here $X := \lbrace x_1, x_2, x_3, \ldots \rbrace$, $x_j := x_j^1$. Define
 $$(x_{i_1}^{g_1} x_{i_2}^{g_2}\ldots x_{i_{n-1}}^{g_{n-1}}
  x_{i_n}^{g_n})^h :=
x_{i_1}^{hg_1} x_{i_2}^{hg_2}\ldots x_{i_{n-1}}^{hg_{n-1}} x_{i_n}^{hg_n}
\text { for } h \in G_0, $$
 $$(x_{i_1}^{g_1} x_{i_2}^{g_2}\ldots x_{i_{n-1}}^{g_{n-1}}
  x_{i_n}^{g_n})^h :=
x_{i_n}^{hg_n} x_{i_{n-1}}^{hg_{n-1}}  \ldots x_{i_2}^{hg_2}x_{i_1}^{hg_1}
\text { for } h \in G\backslash G_0.$$
  Then $F\langle X | G \rangle$ becomes the free $G$-algebra with
 free generators $x_j$, $j \in \mathbb N$. We call its elements
\textit{$G$-polynomials}.
 Let $A$ be an associative  $G$-algebra over $F$
 such that $G_0 \subseteq G$ is acting on $A$ by automorphisms
 and the elements of $G\backslash G_0$ are acting on $A$ by anti-automorphisms.
  A $G$-polynomial
 $f(x_1, \ldots, x_n)\in F\langle X | G \rangle$
 is a \textit{$G$-identity} of $A$ if $f(a_1, \ldots, a_n)=0$
for all $a_i \in A$. In this case we write
$f \equiv 0$.
The set $\Id^{G}(A)$ of all $G$-identities
of $A$ is an ideal in $F\langle X | G \rangle$ invariant under $G$-action.

\begin{example}\label{ExampleIdG} Let $M_2(F)$ be the algebra
of $2\times 2$ matrices. Consider $\psi \in \Aut(M_2(F))$
defined by the formula $$\left(
\begin{array}{cc}
a & b \\
c & d
\end{array}
 \right)^\psi := \left(
\begin{array}{rr}
a & -b \\
-c & d
\end{array}
 \right).$$
Then $[x+x^{\psi},y+y^{\psi}]\in \Id^{G}(M_2(F))$
where
$G=\langle \psi \rangle \cong \mathbb Z_2$.
Here $[x,y]:=xy-yx$.
\end{example}

Denote by $P^G_n$ the space of all multilinear $G$-polynomials
in $x_1, \ldots, x_n$, $n\in\mathbb N$, i.e.
$$P^{G}_n = \langle x^{g_1}_{\sigma(1)}
x^{g_2}_{\sigma(2)}\ldots x^{g_n}_{\sigma(n)}
\mid g_i \in G, \sigma\in S_n \rangle_F \subset F \langle X | G \rangle$$
where $S_n$ is the $n$th symmetric group.
Then the number $c^G_n(A):=\dim\left(\frac{P^G_n}{P^G_n \cap \Id^G(A)}\right)$
is called the $n$th \textit{codimension of polynomial $G$-identities}
or the $n$th \textit{$G$-codimension} of $A$.

If $L$ is a Lie algebra with $G$-action, we define polynomial $G$-identities
and their codimensions analogously, replacing in our definition the free
associative algebra by the free Lie one.

If $G$ is trivial, we get ordinary polynomial identities and their codimensions. Note also that if $A$ is a $G$-algebra, then $A$ is an algebra with a generalized $FG$-action and $c^{FG}_n(A) = c^G_n(A)$ for all $n \in \mathbb N$.

The analog of Amitsur's conjecture for $G$-codimensions can be formulated
as follows.

\begin{conjecture} There exists
 $\PIexp^G(A):=\lim\limits_{n\to\infty}
 \sqrt[n]{c^G_n(A)} \in \mathbb Z_+$.
\end{conjecture}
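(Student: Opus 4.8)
The plan is to deduce the statement from Theorem~\ref{TheoremAssGenHopf} by recasting the $G$-action as a generalized $FG$-action and applying the $G$-invariant Wedderburn--Mal'cev decomposition. As already noted in the text, an associative $G$-algebra $A$ is an algebra with a generalized $FG$-action (for $g\in G_0$ one has $g(ab)=(ga)(gb)$; for $g\in G\setminus G_0$ one has $g(ab)=(gb)(ga)$; extend linearly over $FG$), and $c^{FG}_n(A)=c^G_n(A)$ for all $n$. So it suffices to estimate $c^{FG}_n(A)$ and to identify the limit of its $n$th roots.

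If $A$ is nilpotent, then $P^G_n\subseteq\Id^G(A)$ for $n$ large, whence $c^G_n(A)=0$ eventually and $\PIexp^G(A)=0\in\mathbb Z_+$. So assume $A$ is not nilpotent. By Theorem~\ref{TheoremAffAlgGrWedderburn} there is a $G$-invariant maximal semisimple subalgebra $B\subseteq A$ with $A=B\oplus J$ as a direct sum of $FG$-submodules, $J:=J(A)$. Next I would split $B$ into $G$-simple pieces: $B$ is the direct sum of its minimal two-sided ideals, and $G$ acts on this finite set, since automorphisms and anti-automorphisms of $B$ permute minimal ideals (for an anti-automorphism $\phi$ one has $B\,\phi(B_i)\,B=\phi(B\,B_i\,B)\subseteq\phi(B_i)$, so $\phi(B_i)$ is again a minimal ideal). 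Summing over the $G$-orbits gives $B=B_1\oplus\cdots\oplus B_q$ with each $B_i$ a $G$-invariant ideal that is $FG$-simple and semisimple (an ideal of a finite-dimensional semisimple associative algebra is itself semisimple). This is precisely the configuration required by Theorem~\ref{TheoremAssGenHopf} for $H=FG$; note that $\dim FG$ need not be finite, which is exactly why the finiteness hypothesis was removed from Theorem~\ref{TheoremAssGenHopf}.

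Theorem~\ref{TheoremAssGenHopf} now provides constants $C_1,C_2>0$ and $r_1,r_2\in\mathbb R$ with
$$C_1 n^{r_1} d^n \leqslant c^{FG}_n(A)=c^G_n(A) \leqslant C_2 n^{r_2} d^n \qquad\text{for all }n\in\mathbb N,$$
where $d\in\mathbb N$ is the integer of that theorem, determined by which products $B_{i_1}JB_{i_2}J\cdots JB_{i_r}$ are nonzero, cf.~(\ref{EqPIexp}). Since $\sqrt[n]{Cn^r}\to 1$, taking $n$th roots and letting $n\to\infty$ forces $\sqrt[n]{c^G_n(A)}\to d$. Hence $\PIexp^G(A)$ exists and equals $d\in\mathbb Z_+$, and we obtain in passing the polynomial-times-exponential two-sided bound $C_1 n^{r_1} d^n \leqslant c^G_n(A) \leqslant C_2 n^{r_2} d^n$, the $G$-analog of Theorem~\ref{TheoremDer}.

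The substantive work has been absorbed into Theorem~\ref{TheoremAssGenHopf} (ultimately \cite{ASGordienko3}) and into the $G$-invariant structure theory, so the remainder is assembly; the step deserving the most attention is the verification of every hypothesis of Theorem~\ref{TheoremAssGenHopf} --- in particular that $A=B\oplus J$ genuinely decomposes into $FG$-submodules (the $G$-invariant Wedderburn--Mal'cev theorem) and that $B$ breaks up into $G$-simple semisimple ideals even when elements of $G$ act by anti-automorphisms. A secondary point is that $F$ is assumed algebraically closed of characteristic $0$; over a non-closed ground field one would first extend scalars to $\bar F$, check that $G$-codimensions are unaffected by this extension, and then descend.
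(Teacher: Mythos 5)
Your proposal is correct and follows essentially the same route as the paper: the $G$-invariant Wedderburn--Mal'cev decomposition (Theorem~\ref{TheoremAffAlgGrWedderburn}), a splitting of the maximal semisimple subalgebra $B$ into $G$-simple semisimple ideals, and an application of Theorem~\ref{TheoremAssGenHopf} with $H=FG$, together with the observation that the nilpotent case gives exponent $0$. The only cosmetic difference is that you justify the splitting of $B$ by a direct orbit argument on its minimal ideals, whereas the paper invokes Lemma~\ref{LemmaGWedderburn} (itself proved via Lemma~\ref{LemmaHWedderburn} plus the same orbit idea for the anti-automorphism coset); both arguments are valid.
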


In the Lie case, we have the following two results:

\begin{theorem}[{\cite[Theorem 3]{ASGordienko5}}]\label{TheoremLieGAffAlg}
Let $L$ be a finite dimensional non-nilpotent Lie algebra
over an algebraically closed field $F$ of characteristic $0$. Suppose a reductive affine algebraic group
  $G$ acts on $L$ rationally by automorphisms and anti-automorphisms. Then there exist
 constants $C_1, C_2 > 0$, $r_1, r_2 \in \mathbb R$,
  $d \in \mathbb N$ such that
   $C_1 n^{r_1} d^n \leqslant c^{G}_n(L)
    \leqslant C_2 n^{r_2} d^n$ for all $n \in \mathbb N$.
\end{theorem}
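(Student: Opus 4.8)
The plan is to prove the theorem by establishing matching bounds of the form $\mathrm{poly}(n)\,d^n$, where $d\in\mathbb N$ is a structure constant of $L$ as a $G$-algebra (its explicit value, not needed for the statement itself, will be the maximal dimension of a sum $B_{i_1}\oplus\cdots\oplus B_{i_r}$ of $G$-simple components of a Levi subalgebra such that some iterated bracket of $B_{i_1},\dots,B_{i_r}$ with radical elements inserted between the components is nonzero). The starting point is the $G$-invariant Levi decomposition $L=B\oplus R$ of Theorem~\ref{TheoremAffAlgGrLevi}, with $R$ the solvable radical, $B$ a maximal semisimple subalgebra, and the sum a sum of $G$-invariant subspaces. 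Grouping the simple ideals of $B$ into $G$-orbits yields $B=B_1\oplus\cdots\oplus B_q$, a direct sum of $G$-invariant ideals with each $B_i$ being $G$-simple. The hypotheses that $G$ is reductive and acts rationally enter everywhere through complete reducibility: every rational $G$-module, in particular $L$ and the multilinear spaces $P^G_n$, is a direct sum of irreducible $G$-submodules and every $G$-submodule has a $G$-invariant complement.

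For the upper bound I would pass to the $n$th $G$-cocharacter and decompose the $S_n$-module $P^G_n/(P^G_n\cap\Id^G(L))$ as $\sum_{\lambda\vdash n}m_\lambda\chi_\lambda$. Finite dimensionality of $L$ together with complete reducibility bounds each $m_\lambda$ by a polynomial in $n$. The crucial step is to show that $m_\lambda=0$ unless the Young diagram of $\lambda$ lies in a region for which the hook/strip estimates bound $\dim\chi_\lambda$ by $\mathrm{poly}(n)\,d^n$ --- concretely, unless all rows of $\lambda$ beyond the $d$-th have bounded length. This is forced by the solvability of $R$: a multilinear $G$-polynomial that alternates in too many disjoint sets of $d+1$ variables vanishes identically on $L$, because in any substitution at most $d$ coordinates of each such set can lie in the semisimple part, so at least one bracket factor becomes a product of radical elements longer than the derived length of $R$. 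Summing $m_\lambda\dim\chi_\lambda$ over the admissible region gives $c^G_n(L)\le C_2 n^{r_2}d^n$.

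For the lower bound I would fix $G$-simple components $B_{i_1},\dots,B_{i_r}$ realising $d=\dim(B_{i_1}\oplus\cdots\oplus B_{i_r})$ together with concrete elements witnessing the non-vanishing of the associated iterated bracket, and then build Kemer-type multilinear $G$-polynomials carrying about $n/(d+1)$ alternating $d$-element sets of variables (to be evaluated inside $B_{i_1}\oplus\cdots\oplus B_{i_r}$), with the remaining variables absorbing the radical insertions needed to keep the bracket nonzero. Such a polynomial is shown not to be a $G$-identity of $L$ by an explicit substitution; by averaging the substitution over $G$ (complete reducibility again) one may arrange for the test elements to sit inside single $G$-simple components. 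Counting a linearly independent family of such polynomials via the branching rule, and using that the relevant irreducible $S_n$-modules have dimension of order $d^n$ up to polynomial factors, yields $c^G_n(L)\ge C_1 n^{r_1}d^n$.

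The step I expect to be the main obstacle is the lower bound, and specifically the handling of the anti-automorphisms in $G\setminus G_0$. Since these reverse brackets, the alternating sets of variables and above all the positions of the radical insertions must be chosen so that applying \emph{any} element of $G$ --- automorphism or anti-automorphism --- still produces a nonzero iterated bracket; only then can one be sure the constructed $G$-polynomials avoid $\Id^G(L)$ and span a space of the asserted exponential dimension. This adapts Zaicev's construction for ordinary Lie codimensions~\cite{ZaiLie} to the $G$-equivariant setting. A lesser, bookkeeping-type difficulty is to check that the constant $d$ delimiting the cocharacter support in the upper bound is exactly the one realised by the lower-bound polynomials.
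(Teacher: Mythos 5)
The paper does not actually prove this theorem: it is quoted verbatim from \cite[Theorem~3]{ASGordienko5} and used as a black box, so there is no internal argument to compare yours against; what follows assesses your reconstruction on its own terms. The decisive flaw is your choice of $d$. You have transplanted the associative formula~(\ref{EqPIexp}) --- the maximal dimension of a sum of $G$-simple components of the Levi subalgebra joined by nonzero products through the radical --- into the Lie setting. For Lie algebras the correct exponent, recalled in Subsection~\ref{SubsectionPIexpLie} of this very paper and going back to \cite[Definition~2]{ZaiLie}, is $\max\dim\bigl(L/(\Ann(I_1/J_1)\cap\dots\cap\Ann(I_r/J_r))\bigr)$, the maximum taken over chains of invariant ideals $J_k\subseteq I_k$ with irreducible quotients satisfying a non-vanishing bracket condition. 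These two quantities genuinely differ, and the difference is fatal to your bounds: take $L$ solvable but non-nilpotent (with trivial $G$, which is allowed since the trivial group is reductive). Then the Levi subalgebra is zero, your $d$ equals $0$, and the asserted two-sided bound would force $c^G_n(L)=0$ for all $n$, which is false for a non-nilpotent algebra. The radical contributes to the Lie exponent through the annihilator quotients $L/\Ann(I_k/J_k)$, and a proof organized around sums of $G$-simple components of $B$ cannot recover this contribution.

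There is a second, independent gap in your upper bound. You argue that a bracket factor containing more radical entries than the derived length of $R$ must vanish; but solvability does not annihilate long left-normed brackets of radical elements --- already in the two-dimensional non-abelian solvable algebra one has nonzero iterated brackets of every length. What actually vanishes are products with too many entries from the \emph{nilpotent} radical $N$, combined with $[L,R]\subseteq N$; this is exactly the mechanism of Lemma~\ref{LemmaCocharUpper} and Theorem~\ref{TheoremUpperBoundNilp}, but it only bounds the exponent by $\dim L-\dim N$, not by the true $d$. Reaching the sharp exponent requires the finer annihilator analysis of \cite{ASGordienko5}. Your lower-bound outline (alternating multilinear polynomials, equivariance under the anti-automorphisms, Zaicev-style counting) is the right general shape, but it too must be built around the annihilator formula, with the alternating sets of cardinality $\dim\bigl(L/\bigcap_k\Ann(I_k/J_k)\bigr)$ rather than $\dim(B_{i_1}\oplus\dots\oplus B_{i_r})$; as written, the upper and lower bounds would not meet.
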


\begin{theorem}[{\cite[Theorem 5]{ASGordienko5}}]\label{TheoremLieGAffAlgSum}
Let $L=L_1 \oplus \ldots \oplus L_s$ (direct sum of ideals) be a finite dimensional Lie algebra
over an algebraically closed field $F$ of characteristic $0$. Suppose a reductive affine algebraic group
  $G$ acts on $L$ rationally by automorphisms and anti-automorphisms and the ideals 
$L_i$ are $G$-invariant. Then $\PIexp^G(L)=\max_{1 \leqslant i \leqslant s}
\PIexp^G(L_i)$.
\end{theorem}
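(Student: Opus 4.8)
This is the "$G$-codimensions of a direct sum of $G$-invariant ideals" statement in the Lie case, and the natural strategy is to bootstrap it off Theorem~\ref{TheoremLieGAffAlg}, which already gives the two-sided bound $C_1 n^{r_1}d_i^n \le c^G_n(L_i) \le C_2 n^{r_2}d_i^n$ for each non-nilpotent summand (with $c^G_n(L_i)$ eventually zero if $L_i$ is nilpotent). So it suffices to prove that $\PIexp^G(L)$ exists and equals $\max_i \PIexp^G(L_i)$, i.e. that $\sqrt[n]{c^G_n(L)} \to d := \max_i d_i$.

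First I would establish the lower bound, which is the easy direction: since $L_i$ is a $G$-invariant ideal, the quotient map $L \twoheadrightarrow L/(\bigoplus_{j\ne i}L_j) \cong L_i$ is a homomorphism of $G$-algebras, hence $\Id^G(L) \subseteq \Id^G(L_i)$ and $c^G_n(L) \ge c^G_n(L_i)$ for every $i$ and every $n$. Taking the summand $L_i$ realizing the maximum gives $\liminf_n \sqrt[n]{c^G_n(L)} \ge d$. The upper bound is the real content. The standard approach in this circle of ideas (cf. the ordinary case in~\cite[Section~6.2]{ZaiGia} and the Lie case in~\cite{ZaiLie, ASGordienko5}) is to show $\Id^G(L) \supseteq \prod_{i=1}^s \Id^G(L_i)$ as ideals of $F\langle X\mid G\rangle$ (Lie version), or more precisely that a multilinear $G$-polynomial lying in $\sum_i \big(V^G_n \cap \Id^G(L_i)\big)$ — after a suitable partition of the variables among the summands — is an identity of $L$; this is because evaluating a multilinear polynomial on $L = \bigoplus L_i$ reduces, by multilinearity, to evaluating on tuples of elements each lying in a single $L_i$, and a bracket monomial with variables distributed over two or more distinct ideals vanishes. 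Concretely, $V^G_n \cap \Id^G(L) \supseteq \sum_{\phi\colon\{1,\dots,n\}\to\{1,\dots,s\}} \bigcap_{i} \big(V^G_n \cap \Id^G\big(L\text{ with }x_j\text{-slots in }\phi^{-1}(i)\big)\big)$, and bounding the codimension of this intersection against the product $\prod_i c^G_{|\phi^{-1}(i)|}(L_i)$ over all $s^n$ functions $\phi$ gives
\[
c^G_n(L) \le \sum_{\substack{n_1+\dots+n_s=n\\ n_i\ge 0}} \binom{n}{n_1,\dots,n_s}\,\prod_{i=1}^s c^G_{n_i}(L_i).
\]

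Once that combinatorial inequality is in hand, the upper bound $\limsup_n \sqrt[n]{c^G_n(L)} \le d$ follows by a routine estimate: substitute $c^G_{n_i}(L_i) \le C n^{r} d_i^n \le C n^r d^{n_i}$ (absorbing the fixed number of summands and the polynomial factors into $C n^R$ with $R$ depending only on $s$ and the $r_i$), whence $c^G_n(L) \le C n^{R}\, d^n \sum_{n_1+\dots+n_s=n}\binom{n}{n_1,\dots,n_s} = C n^{R}\, d^n\, s^n$ — which is too weak — so one must instead track the $d_i$ honestly: the dominant term in the multinomial sum is governed by $\max_i d_i = d$ up to a subexponential factor, because $\sum_{n_1+\dots+n_s=n}\binom{n}{n_1,\dots,n_s}\prod_i d_i^{n_i} = (d_1+\dots+d_s)^n$, so one actually needs the sharper per-summand bound only on the relevant range and the observation that whenever $\phi$ puts a positive fraction of slots into a summand $L_i$ with $d_i < d$, the contribution is exponentially smaller. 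Combining with the lower bound yields $\lim_n \sqrt[n]{c^G_n(L)} = d \in \mathbb{Z}_+$, and since $d_i = \PIexp^G(L_i)$ by Theorem~\ref{TheoremLieGAffAlg}, we get $\PIexp^G(L) = \max_i \PIexp^G(L_i)$.

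The main obstacle is the upper-bound direction, and within it the honest bookkeeping of the multinomial sum: the naive bound loses an $s^n$ factor, so one has to argue that the summands $\phi$ with $\phi^{-1}(i)$ large only for indices $i$ achieving $d_i = d$ dominate, which is exactly the identity $\sum \binom{n}{n_1,\dots,n_s}\prod d_i^{n_i}=(\sum d_i)^n$ used in reverse together with the fact that terms mixing in a strictly-smaller $d_i$ are exponentially suppressed. A cleaner route, and the one I would actually pursue, is to avoid the sum altogether: first reduce to $s=2$ by induction on $s$, then for $L = L_1 \oplus L_2$ note $\PIexp^G(L_1\oplus L_2) \le \PIexp^G(L_1) + \PIexp^G(L_2)$ is still not sharp — so instead one invokes the general principle (available here because Theorem~\ref{TheoremLieGAffAlg} already supplies the existence and integrality of each $\PIexp^G(L_i)$, and the lower bound above) that $\PIexp^G(L)$, once known to exist, must equal $\max_i \PIexp^G(L_i)$ by sandwiching: $\max_i c^G_n(L_i) \le c^G_n(L) \le \sum_i c^G_n(L_i) \cdot (\text{poly})$ when the $L_i$ pairwise multiply to zero under the distribution argument — and in the Lie case $[L_i, L_j] = 0$ for $i \ne j$, so in fact a multilinear monomial with variables from two different ideals is already zero, which forces $V^G_n \cap \Id^G(L) \supseteq \sum_i V^G_n\cap\Id^G(L_i)$ after symmetrizing over which single ideal absorbs all variables, giving the clean bound $c^G_n(L) \le \sum_{i=1}^s c^G_n(L_i)$ directly. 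That last inequality, together with the lower bound and Theorem~\ref{TheoremLieGAffAlg}, finishes the proof immediately.
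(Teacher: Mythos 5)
Your final argument is correct, but note first that the paper does not actually prove this statement: it is quoted verbatim from \cite[Theorem~5]{ASGordienko5}, and the analogous associative statement (Theorem~\ref{TheoremAssGAffAlgSum}) is derived there from the explicit exponent formula~(\ref{EqPIexp}) --- the point being that a product $B_{i_1}JB_{i_2}J\cdots JB_{i_r}\ne 0$ forces all factors into a single ideal $A_j$, so the maximum over $A$ is the maximum over the $A_j$; the Lie version in \cite{ASGordienko5} runs the same way through the formula of Subsection~\ref{SubsectionPIexpLie}. Your route is genuinely different and more elementary: the sandwich $\max_i c^G_n(L_i)\leqslant c^G_n(L)\leqslant \sum_i c^G_n(L_i)$, with the upper bound coming from $[L_i,L_j]=0$ for $i\ne j$ (so a multilinear Lie monomial with entries from two distinct ideals vanishes, whence $V^G_n\cap\Id^G(L)=\bigcap_i\bigl(V^G_n\cap\Id^G(L_i)\bigr)$ and $V^G_n/\bigl(V^G_n\cap\Id^G(L)\bigr)$ embeds into $\bigoplus_i V^G_n/\bigl(V^G_n\cap\Id^G(L_i)\bigr)$), together with the existence of each $\PIexp^G(L_i)$ from Theorem~\ref{TheoremLieGAffAlg}. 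This buys a proof that needs no structural description of the exponent, only the integrality/existence result for each summand; the formula-based proof buys an explicit value of the exponent at the same time. Two expository defects you should fix: the long detour through the multinomial estimate $\sum\binom{n}{n_1,\dots,n_s}\prod_i c^G_{n_i}(L_i)$ is a red herring in the Lie setting (it is the right shape for the associative case, where mixed monomials need not vanish) and should be cut; and the displayed containment ``$V^G_n\cap\Id^G(L)\supseteq\sum_i V^G_n\cap\Id^G(L_i)$'' is wrong as written --- the sum is larger than each term while $\Id^G(L)\subseteq\Id^G(L_i)$, so what you mean (and what your vanishing argument proves) is the equality with the \emph{intersection} stated above.
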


In particular, for reductive $G$, the analog of Amitsur's conjecture holds for $G$-codimensions of finite dimensional Lie algebras. In this subsection we will derive similar results for associative algebras:

\begin{theorem}\label{TheoremAssAffAlgG}
Let $A$ be a finite dimensional non-nilpotent associative algebra
over an algebraically closed field $F$ of characteristic $0$. Suppose a reductive affine algebraic group
  $G$ acts on $A$ rationally by automorphisms and anti-automorphisms.
 Then there exist constants $C_1, C_2 > 0$, $r_1, r_2 \in \mathbb R$, $d \in \mathbb N$ such that $C_1 n^{r_1} d^n \leqslant c^{G}_n(A) \leqslant C_2 n^{r_2} d^n$ for all $n \in \mathbb N$.
\end{theorem}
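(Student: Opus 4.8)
The plan is to reduce the statement to Theorem~\ref{TheoremAssGenHopf} by choosing the right algebra $H$ and the right decomposition of $A$. Since $A$ is a $G$-algebra, it is an algebra with a generalized $FG$-action, and $c^{FG}_n(A)=c^G_n(A)$ for all $n$ (as remarked in the text). So it suffices to produce a decomposition $A=B\oplus J$ of the kind required by Theorem~\ref{TheoremAssGenHopf}, with $H=FG$: namely $J=J(A)$ must be $H$-invariant, $B$ must be a direct sum of $H$-invariant ideals each of which is $H$-simple and semisimple, and then the two-sided exponential bound follows with the explicit constant $d$ of~(\ref{EqPIexp}). The first ingredient is immediate: $J(A)$ is the maximal nilpotent ideal, hence invariant under all automorphisms and anti-automorphisms, so it is $G$-invariant, i.e. $FG$-invariant. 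The $G$-invariant Wedderburn~--- Mal'cev theorem, Theorem~\ref{TheoremAffAlgGrWedderburn}, then gives a $G$-invariant (equivalently $FG$-invariant) semisimple subalgebra $B$ with $A=B\oplus J$.

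The remaining work is to arrange that $B$ decomposes as a direct sum of $FG$-invariant ideals that are $FG$-simple. This is where the reductive hypothesis on $G$ is used. Write $B=B_1\oplus\cdots\oplus B_m$ as a direct sum of the simple two-sided ideals of the semisimple algebra $B$ (the Wedderburn blocks). Any automorphism or anti-automorphism of $B$ permutes this set of blocks, so the action of $G$ on the finite set $\{B_1,\dots,B_m\}$ gives a homomorphism from $G$ to the symmetric group $S_m$; since $G$ is an affine algebraic group and the target is finite, the kernel is a closed subgroup of finite index, and $G$ permutes the blocks through a finite quotient. Grouping the blocks into $G$-orbits, we get $B=\bar B_1\oplus\cdots\oplus\bar B_q$ where each $\bar B_j$ is a $G$-invariant (hence $FG$-invariant) ideal and is the sum of a single $G$-orbit of Wedderburn blocks. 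It remains to check that each $\bar B_j$ is $FG$-simple, i.e. has no proper nonzero $G$-invariant ideal: any ideal of $\bar B_j$ is a sum of some of the blocks in that orbit, and $G$ acts transitively on those blocks, so the only $G$-invariant such sums are $0$ and $\bar B_j$ itself. Thus $B=\bar B_1\oplus\cdots\oplus\bar B_q$ is exactly the decomposition demanded by Theorem~\ref{TheoremAssGenHopf}.

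Now apply Theorem~\ref{TheoremAssGenHopf} with $H=FG$ and this decomposition $A=B\oplus J$: it yields constants $C_1,C_2>0$ and $r_1,r_2\in\mathbb R$ with $C_1 n^{r_1} d^n\leqslant c^{FG}_n(A)=c^G_n(A)\leqslant C_2 n^{r_2} d^n$, where $d$ is the integer of~(\ref{EqPIexp}), namely the maximal dimension of a sum $\bar B_{i_1}\oplus\cdots\oplus\bar B_{i_r}$ over all tuples with $\bar B_{i_1} J \bar B_{i_2} J\cdots J\bar B_{i_r}\neq 0$. Since $A$ is assumed non-nilpotent, $B\neq 0$, so $d\geqslant 1$, and all the hypotheses of Theorem~\ref{TheoremAssGenHopf} are in force. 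This completes the proof.

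I expect the only genuinely nontrivial point to be verifying the hypotheses of Theorem~\ref{TheoremAssGenHopf}, and specifically the orbit argument showing each $\bar B_j$ is $FG$-simple; the finiteness of the $G$-action on the set of Wedderburn blocks is what makes this go through, and it is where reductivity (via Theorem~\ref{TheoremAffAlgGrWedderburn} for the splitting, and the finite-quotient action on blocks) enters. Everything else is bookkeeping: the identification $c^{FG}_n=c^G_n$, the $G$-invariance of $J(A)$, and the translation of~(\ref{EqPIexp}) into the present notation.
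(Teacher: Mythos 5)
Your proposal is correct and follows essentially the same route as the paper: split $A=B\oplus J(A)$ via Theorem~\ref{TheoremAffAlgGrWedderburn}, decompose $B$ into $G$-simple $G$-invariant ideals, and feed the result into Theorem~\ref{TheoremAssGenHopf} with $H=FG$. The only difference is that where the paper invokes Lemma~\ref{LemmaGWedderburn} (proved via the Hopf-algebra Lemma~\ref{LemmaHWedderburn}), you re-derive that decomposition inline by the elementary and equally valid argument that $G$ permutes the Wedderburn blocks of $B$ and acts transitively on each orbit.
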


\begin{corollary}
The above analog of Amitsur's conjecture holds
 for such codimensions.
\end{corollary}


\begin{theorem}\label{TheoremAssGAffAlgSum}
Let $A=A_1 \oplus \ldots \oplus A_s$ (direct sum of ideals) be a finite dimensional associative algebra
over an algebraically closed field $F$ of characteristic $0$. Suppose a reductive affine algebraic group
  $G$ acts on $A$ rationally by automorphisms and anti-automorphisms, and the ideals 
$A_i$ are $G$-invariant. Then $\PIexp^G(A)=\max_{1 \leqslant i \leqslant s}
\PIexp^G(A_i)$.
\end{theorem}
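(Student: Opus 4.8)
The plan is to reduce Theorem~\ref{TheoremAssGAffAlgSum} to the estimates already established for the individual summands, exactly as in the analogous Lie-algebra statement (Theorem~\ref{TheoremLieGAffAlgSum}). First I would dispose of trivialities: if every $A_i$ is nilpotent, then $A$ is nilpotent, $c^G_n(A)=0$ for $n$ large, and there is nothing to prove; so I may assume that at least one $A_i$ is non-nilpotent and reorder so that $A_1,\dots,A_t$ are non-nilpotent while $A_{t+1},\dots,A_s$ are nilpotent. By Theorem~\ref{TheoremAssAffAlgG} each non-nilpotent $A_i$ has a well-defined $\PIexp^G(A_i)=d_i\in\mathbb N$, and I set $d:=\max_{1\le i\le t} d_i$; the goal is to show $\PIexp^G(A)$ exists and equals $d$.

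For the lower bound, $\PIexp^G(A)\ge d$: since each $A_i$ is a $G$-invariant ideal, restriction of $G$-polynomials gives, for each fixed $i$, a surjection $P^G_n/(P^G_n\cap\Id^G(A))\twoheadrightarrow P^G_n/(P^G_n\cap\Id^G(A_i))$ because any $G$-identity of $A$ is in particular a $G$-identity of the subalgebra $A_i$; hence $c^G_n(A)\ge c^G_n(A_i)$ for all $n$ and all $i$, whence $\liminf_n\sqrt[n]{c^G_n(A)}\ge d_i$ for each non-nilpotent $A_i$, so $\liminf_n\sqrt[n]{c^G_n(A)}\ge d$.

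The upper bound, $\limsup_n\sqrt[n]{c^G_n(A)}\le d$, is the substantive half. The standard mechanism (cf.\ the ordinary case in \cite[Section~6.2]{ZaiGia} and its graded/$G$-analogs in \cite{ASGordienko5}) is that a multilinear $G$-polynomial that is \emph{not} a $G$-identity of $A$ must fail to vanish on some substitution $a_1,\dots,a_n$ with each $a_k$ homogeneous for the decomposition $A=A_1\oplus\cdots\oplus A_s$; by multilinearity and since $A_iA_j=0$ for $i\neq j$ (direct sum of ideals, so the product of elements from distinct summands is zero), a nonzero value forces \emph{all} the substituted arguments to lie in a single summand $A_i$. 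This yields, after accounting for the at most $s^n$ ways of coloring the variables by summands (only the monochromatic colorings can contribute), an inequality of the shape $c^G_n(A)\le \sum_{i=1}^{s} c^G_n(A_i) + (\text{contributions from nilpotent summands, which die for large }n)$; more carefully, one gets $c^G_n(A)\le \sum_{i=1}^{t} c^G_n(A_i)$ for $n$ large enough. Combined with Theorem~\ref{TheoremAssAffAlgG}, $c^G_n(A_i)\le C^{(i)}_2 n^{r^{(i)}_2} d_i^n\le C n^{r} d^n$, so $c^G_n(A)\le (tC) n^r d^n$ and $\limsup_n\sqrt[n]{c^G_n(A)}\le d$.

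The main obstacle is making the reduction ``a nonzero monochromatic value forces all arguments into one $A_i$'' fully rigorous in the $G$-setting: one must check that $G$-polynomials behave well under the summand decomposition, i.e.\ that the $G$-action respects $A=\bigoplus_i A_i$ (true, since the $A_i$ are $G$-invariant ideals), and that substituting a mix of arguments from $A_i$ and $A_j$ with $i\ne j$ into any multilinear $G$-monomial $x^{g_1}_{\sigma(1)}\cdots x^{g_n}_{\sigma(n)}$ gives $0$ — which holds because each $g_k$ acts as an automorphism or anti-automorphism preserving each $A_i$, so $g_k a_k\in A_i$ when $a_k\in A_i$, and then the ordered product contains adjacent factors from distinct ideals and vanishes. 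Rather than re-deriving this from scratch, I would invoke it in the same form already used for Lie algebras in \cite{ASGordienko5} and, via $c^{FG}_n(A)=c^G_n(A)$, the generalized-$H$-action machinery of Theorem~\ref{TheoremAssGenHopf}; indeed the cleanest route is to observe that Theorem~\ref{TheoremAssGAffAlgSum} is the $FG$-instance of the general principle that differential/$G$/$H$-codimensions of a direct sum of invariant ideals are governed by the maximum of the summands' exponents, and to cite the corresponding argument verbatim, supplying only the summand-vanishing observation above as the one new ingredient needed in the $G$-case.
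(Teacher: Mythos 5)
Your proposal is correct, but it takes a different route from the paper. The paper proves Theorems~\ref{TheoremAssAffAlgG} and~\ref{TheoremAssGAffAlgSum} together: it first establishes the $G$-invariant decomposition $A=B_1\oplus\dots\oplus B_q\oplus J(A)$ via Theorem~\ref{TheoremAffAlgGrWedderburn} and Lemma~\ref{LemmaGWedderburn}, obtains the explicit formula~(\ref{EqPIexp}) for $\PIexp^G(A)$ from Theorem~\ref{TheoremAssGenHopf}, and then dismisses Theorem~\ref{TheoremAssGAffAlgSum} as ``an immediate consequence of~(\ref{EqPIexp})'': since $A_iA_j=0$ for $i\ne j$, any nonzero chain $B_{i_1}JB_{i_2}J\cdots JB_{i_r}$ must lie entirely inside one summand $A_j$, so the maximum in~(\ref{EqPIexp}) computed for $A$ equals the maximum of the corresponding quantities for the $A_j$. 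You instead work directly with codimensions, sandwiching $\max_i c^G_n(A_i)\leqslant c^G_n(A)\leqslant\sum_i c^G_n(A_i)$ (the lower bound from $\Id^G(A)\subseteq\Id^G(A_i)$, the upper bound from the fact that a multilinear $G$-monomial vanishes on any substitution mixing distinct $G$-invariant ideals, so that $P^G_n\cap\Id^G(A)=P^G_n\cap\bigcap_i\Id^G(A_i)$), and then invoke the existence of $\PIexp^G(A_i)$ from Theorem~\ref{TheoremAssAffAlgG}. Both arguments hinge on the same vanishing observation $A_iA_j=0$; the paper's version is shorter given that the formula~(\ref{EqPIexp}) is already in hand, while yours is more self-contained and does not depend on the precise shape of that formula, only on the integrality/existence of the exponents of the summands. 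Your argument is complete as stated; the one point worth writing out carefully (which you flag yourself) is the injection $P^G_n/\bigl(P^G_n\cap\bigcap_i\Id^G(A_i)\bigr)\hookrightarrow\bigoplus_i P^G_n/\bigl(P^G_n\cap\Id^G(A_i)\bigr)$ that yields the subadditivity of codimensions.
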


We need the following result, which is similar to \cite[Theorem~6]{ASGordienko4} in the Lie case.

\begin{lemma}\label{LemmaHWedderburn}
Let $A$ be a finite dimensional semisimple associative $H$-module algebra
where $H$ is a Hopf algebra over an arbitrary field $F$ such that the antipode $S$
is bijective. Then $A=B_1 \oplus \ldots \oplus B_q$ (direct sum of $H$-invariant ideals)
where $B_i$ are $H$-simple algebras.
\end{lemma}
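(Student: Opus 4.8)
The plan is to show that the $H$-submodule structure on $A$ is compatible enough with the Wedderburn decomposition of the semisimple algebra $A$ that one can group the simple blocks into $H$-invariant pieces. Write $A = A_1 \oplus \dots \oplus A_k$ as the decomposition of $A$ into simple two-sided ideals; this is the decomposition into the minimal two-sided ideals, and each $A_i$ has a central idempotent $e_i$, with $1 = e_1 + \dots + e_k$. The first step is to understand how $H$ permutes these blocks. For $h \in H$ and $a \in A_i$, the generalized-looking relation we actually have here is the genuine $H$-module algebra axiom $h(ab) = (h_{(1)}a)(h_{(2)}b)$; I would use this, together with the fact that each $A_i$ is generated as a (non-unital) algebra by its own elements, to show that $h A_i \subseteq \bigoplus_{j \in T_i} A_j$ for some subset $T_i$, i.e.\ that the span of a block under $H$ is again a sum of blocks. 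Concretely: $A_i = A_i A_i$ since $A_i$ is unital with identity $e_i$, so $hA_i = h(A_i A_i) \subseteq \sum (h_{(1)}A_i)(h_{(2)}A_i) \subseteq A A_i A \subseteq A_i$ would be too strong; instead the correct statement is that $h e_i$ lies in the subalgebra generated by the $e_j$, and more usefully that the two-sided ideal $A (hA_i) A$ is contained in a sum of blocks because left and right multiplication by $A$ preserves each block, while $hA_i$ itself is squeezed using $h(A_i A_i)$.

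The cleaner route, which I would actually take, is via the antipode. Because $S$ is bijective, $H$ acts on the dual notions well enough that the relation ``$W$ is an $H$-submodule'' transfers to annihilators: if $W \subseteq A$ is a left ideal that is an $H$-submodule, I claim its left annihilator (or the complementary block sum) is again an $H$-submodule, the key identity being $h(ab) = (h_{(1)}a)(h_{(2)}b)$ applied with $a$ running over $W$ and using that $S$ lets us ``solve'' for one tensor factor. Using this, the set of block-sums $\bigoplus_{j\in T} A_j$ that are $H$-submodules is closed under the lattice operations and under the orthogonal-complement operation $T \mapsto \{1,\dots,k\}\setminus T$; hence the minimal nonzero such block-sums $B_1, \dots, B_q$ partition $\{A_1,\dots,A_k\}$, give $A = B_1 \oplus \dots \oplus B_q$ as a direct sum of $H$-invariant two-sided ideals, and each $B_i$ is $H$-simple precisely because it is minimal among $H$-invariant block-sums (any $H$-invariant ideal of $B_i$ is an $H$-invariant ideal of $A$, hence a block-sum, hence $0$ or all of $B_i$). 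The main obstacle is the middle step: verifying carefully that the span of an $H$-submodule under the algebra operations, or equivalently the passage ``$H$-submodule left ideal $\rightsquigarrow$ $H$-invariant two-sided ideal $\rightsquigarrow$ complement,'' really does use only bijectivity of $S$ and not, say, cocommutativity or finite-dimensionality of $H$; this is exactly where the hypothesis on $S$ is consumed, and it is where the analogy with \cite[Theorem~6]{ASGordienko4} should be invoked and adapted from the Lie to the associative setting.

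I expect the whole argument to be short once the block-permutation fact is nailed down: everything else is the standard observation that semisimple associative algebras have the descending/ascending chain condition on two-sided ideals, so minimal $H$-invariant ideals exist and a finite orthogonal decomposition into them is forced. If the antipode route proves fussy, a fallback is to note that $A$ is finite dimensional so $H$ acts through a finite-dimensional quotient $\zeta(H) = H/\ker\zeta$, which inherits a Hopf structure with bijective antipode only if $\ker\zeta$ is a Hopf ideal --- which need not hold --- so this fallback is not automatic, and I would therefore commit to the antipode-based transfer of $H$-invariance from one-sided to two-sided ideals and to complements as the backbone of the proof.
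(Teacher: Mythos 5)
Your ``cleaner route'' is exactly the paper's proof: decompose $A=A_1\oplus\dots\oplus A_s$ into simple blocks, take a minimal $H$-invariant ideal $B_1$ (automatically a block-sum), form its two-sided annihilator $\tilde B_1=\lbrace a\in A\mid ab=ba=0\text{ for all }b\in B_1\rbrace$ (which is the complementary block-sum, so $A=B_1\oplus\tilde B_1$), show $\tilde B_1$ is $H$-invariant, and iterate; your lattice-of-block-sums phrasing and the paper's induction are the same argument, and your first paragraph's attempt to show that $H$ permutes the blocks is not needed. The one step you defer --- and correctly identify as the entire content --- is the $H$-invariance of the annihilator, and it closes exactly where you predicted. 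For $a\in\tilde B_1$, $b\in B_1$, $h\in H$, insert $\varepsilon(h_{(2)})1=h_{(2)}Sh_{(3)}$ in front of $b$ and apply the module-algebra axiom backwards:
\[
(ha)b=(h_{(1)}a)\bigl((h_{(2)}Sh_{(3)})b\bigr)=h_{(1)}\bigl(a\,\bigl((Sh_{(2)})b\bigr)\bigr)=0,
\]
since $(Sh_{(2)})b\in B_1$ is annihilated by $a$; for the other side insert $\varepsilon(h_{(1)})1=S^{-1}(h_{(1)}Sh_{(2)})=h_{(2)}S^{-1}(h_{(1)})$ (here $S^{-1}$ being an anti-homomorphism is used) to get
\[
b(ha)=\bigl((h_{(2)}S^{-1}h_{(1)})b\bigr)(h_{(3)}a)=h_{(2)}\bigl(\bigl((S^{-1}h_{(1)})b\bigr)a\bigr)=0.
\]
Only the module-algebra axiom, the antipode identities, and the existence of $S^{-1}$ (for the second computation) are consumed --- no cocommutativity, no finiteness of $\dim H$ --- so the hypothesis is used precisely where you said it would be. You are also right to reject the fallback of factoring through $\zeta(H)$, since the quotient need not inherit a Hopf structure. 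In short: right architecture, but the proposal as written stops just short of the only calculation that needs to be done.
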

\begin{proof}
By Wedderburn's theorem, $A=A_1 \oplus \ldots \oplus A_s$ (direct sum of ideals) 
where $A_i$ are simple algebras not necessarily $H$-invariant.
Let $B_1$ be a minimal $H$-invariant ideal of $A$. Then $B_1 = A_{i_1} \oplus \ldots \oplus A_{i_k}$
for some $i_1, i_2, \ldots, i_k \in \lbrace 1, 2, \ldots, s\rbrace$. 
Consider $\tilde B_1 = \lbrace a \in A \mid ab=ba=0 \text{ for all } b \in B_1 \rbrace$.
Then $\tilde B_1$ equals the sum of all $A_j$, $j \notin \lbrace 
i_1, i_2, \ldots, i_k\rbrace$, and $A = B_1 \oplus \tilde B_1$.
 We claim that $\tilde B_1$ is $H$-invariant. Indeed, let $a \in \tilde B_1$,
 $b \in B_1$. Denote by $\varepsilon$ the counit of $H$
 and by $\Delta(h)=h_{(1)}\otimes h_{(2)}$ its comultiplication.
   Then 
  $$(ha)b = (h_{(1)}a)(\varepsilon(h_{(2)}) b)=(h_{(1)}a)(h_{(2)}(Sh_{(3)})b)
  = h_{(1)}(a(Sh_{(2)})b)=0$$
  since $B_1$ is $H$-invariant.
  Moreover, $$b(ha) = (S^{-1}(\varepsilon(h_{(1)})1)b)(h_{(2)}a)=(S^{-1}(h_{(1)}Sh_{(2)})b)(h_{(3)}a)=$$
  $$(h_{(2)}(S^{-1}h_{(1)})b)(h_{(3)}a)
  = h_{(2)}(((S^{-1}h_{(1)})b)a)=0.$$
  Hence $\tilde B_1$ is $H$-invariant and the inductive argument finishes the proof.
\end{proof}
\begin{lemma}\label{LemmaGWedderburn}
Let $A$ be a finite dimensional semisimple associative algebra
over an arbitrary field $F$, with an action of a group $G$ by automorphisms and anti-automorphisms. Then $A=B_1 \oplus \ldots \oplus B_q$ (direct sum of $G$-invariant ideals)
where $B_i$ are $G$-simple algebras.
\end{lemma}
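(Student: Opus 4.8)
The plan is to imitate the proof of Lemma~\ref{LemmaHWedderburn}, but since an action by anti-automorphisms does not make $A$ into a module algebra over a Hopf algebra in the usual sense, it is cleanest to argue directly and combinatorially. By Wedderburn's theorem write $A=A_1\oplus\ldots\oplus A_s$ (direct sum of ideals), where the $A_i$ are the minimal two-sided ideals of $A$ and each of them is simple. The first step is to observe that every $\psi\in\Aut^{*}(A)$ permutes the set $\lbrace A_1,\ldots,A_s\rbrace$: an automorphism or anti-automorphism is a bijection that sends two-sided ideals to two-sided ideals and preserves inclusions among them, so it carries a minimal two-sided ideal to a minimal one. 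Hence the homomorphism $G\to\Aut^{*}(A)$ defining the $G$-action induces an action of $G$ on the finite set $\lbrace A_1,\ldots,A_s\rbrace$ by permutations.

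Next, let $\mathcal O_1,\ldots,\mathcal O_q$ be the orbits of this $G$-action and put $B_k:=\bigoplus_{A_i\in\mathcal O_k}A_i$ for $1\leqslant k\leqslant q$. Each $B_k$ is a $G$-invariant two-sided ideal of $A$, and $A=B_1\oplus\ldots\oplus B_q$ is a direct sum of ideals. It remains to prove that each $B_k$ is $G$-simple. Suppose $C\subseteq B_k$ is a nonzero $G$-invariant ideal of $B_k$. Since $A=B_k\oplus\bigl(\bigoplus_{l\ne k}B_l\bigr)$ and $B_lB_k\subseteq B_l\cap B_k=0$ for $l\ne k$, the subspace $C$ is in fact an ideal of $A$; as $A$ is semisimple, $C$ is a sum of some of the $A_i$, and the inclusion $C\subseteq B_k$ forces $C=\bigoplus_{i\in T}A_i$ for some nonempty set $T$ of indices with $A_i\in\mathcal O_k$ for all $i\in T$. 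The $G$-invariance of $C$, together with the uniqueness of the decomposition of an ideal of $A$ into minimal ideals, forces $T$ to be stable under the induced $G$-action on $\mathcal O_k$; since $\mathcal O_k$ is a single orbit and $T$ is nonempty, $T$ must index all of $\mathcal O_k$, i.e. $C=B_k$. Therefore $B_k$ is $G$-simple, and the lemma is proved.

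I expect no serious obstacle here: the only points needing (routine) verification are that automorphisms and anti-automorphisms preserve the lattice of two-sided ideals together with minimality, and that an ideal of a single block $B_k$ is automatically an ideal of $A$, which is what lets one apply semisimplicity of $A$ to split $C$ into minimal ideals. If $G$ acts by automorphisms only one could instead invoke Lemma~\ref{LemmaHWedderburn} with $H=FG$, but since the case of anti-automorphisms requires the combinatorial argument anyway, it is simplest to treat both situations uniformly as above.
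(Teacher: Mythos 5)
Your proof is correct, but it takes a genuinely different route from the paper. The paper first disposes of the case $G=G_0$ by invoking Lemma~\ref{LemmaHWedderburn} with $H=FG$ (whose antipode $g\mapsto g^{-1}$ is bijective); when $G\ne G_0$ it applies that lemma to the index-$2$ subgroup $G_0$ of elements acting by automorphisms to get $A=\tilde B_1\oplus\ldots\oplus\tilde B_k$ with $\tilde B_i$ being $G_0$-simple, notes that these are the only $G_0$-simple ideals, and then, fixing $g\in G\backslash G_0$, assembles the $G$-simple blocks as $\tilde B_i+g\tilde B_i$. You instead bypass the Hopf-algebra machinery entirely: you go down to the Wedderburn decomposition into minimal (simple) ideals $A_1,\ldots,A_s$, observe that $\Aut^{*}(A)$ permutes them (your verification that anti-automorphisms preserve two-sided ideals and minimality is the only non-formal point, and it is routine), and take orbit sums; $G$-simplicity of an orbit sum follows because any $G$-invariant ideal of a block is an ideal of $A$, hence a sub-sum of minimal ideals whose index set must be $G$-stable, hence the whole orbit. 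Your argument is more elementary, works uniformly whether or not anti-automorphisms are present, and gives a finer picture (each $B_k$ is an orbit sum of simple ideals); the paper's argument is shorter on the page because it reuses Lemma~\ref{LemmaHWedderburn}, which is needed elsewhere anyway, and it records the relation $B_j=\tilde B_i+g\tilde B_i$ between the $G$-simple and $G_0$-simple blocks. Both proofs are valid over an arbitrary field.
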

\begin{proof}
Again, suppose that $G_0 \subseteq G$ is acting on $A$ by automorphisms and the elements of $G\backslash G_0$
are acting by anti-automorphisms. 

If $G = G_0$, the lemma is a consequence of Lemma~\ref{LemmaHWedderburn}, since the antipode $S$ of the Hopf algebra $FG$ is bijective: $Sg=g^{-1}$, $g\in G$.

Suppose $G \ne G_0$. Then by Lemma~\ref{LemmaHWedderburn},
$B = \tilde B_1 \oplus \ldots \oplus \tilde B_k$ (direct sum of $G_0$-invariant ideals) where $\tilde B_i$
are $G_0$-simple algebras. Standard arguments (see e.g.~\cite[Chapter~III, Section~5, Theorem~4]{JacobsonLie}) show that every $G_0$-simple ideal of $B$ coincides
with one of $\tilde B_i$. Let $g \in G\backslash G_0$. Then $(\tilde B_i + g \tilde B_i)$ is a $G$-simple ideal
for every $1 \leqslant i \leqslant k$ and $A=B_1 \oplus \ldots \oplus B_q$ (direct sum of $G$-invariant ideals)
where each $B_j=\tilde B_i + g \tilde B_i$ for some $i$.
\end{proof}
\begin{proof}[Proof of Theorems~\ref{TheoremAssAffAlgG} and~\ref{TheoremAssGAffAlgSum}]
Note that, by Theorem~\ref{TheoremAffAlgGrWedderburn}, $A=B\oplus J(A)$ (direct sum of $G$-invariant subspaces)
where $B$ is a $G$-invariant maximal semisimple subalgebra.  Hence Lemma~\ref{LemmaGWedderburn} implies $B=B_1 \oplus \ldots \oplus B_q$ (direct sum of $G$-invariant spaces) where $B_i$ are $G$-simple algebras.  Now Theorem~\ref{TheoremAssAffAlgG} follows  from Theorem~\ref{TheoremAssGenHopf}.

Theorem~\ref{TheoremAssGAffAlgSum} is an immediate consequence of~(\ref{EqPIexp}).
\end{proof}

\subsection{Applications to differential identities}\label{SubsectionGApplToDiff}
Let $C$ be a vector space and let $C^*$ be its dual. We say that a subspace $A \subseteq C^*$ is \textit{dense} in $C^*$ if $A^\perp=0$ where $A^\perp := \lbrace c \in C \mid \varphi(c)=0 \text{ for all } \varphi \in A\rbrace$. An equivalent condition for $A$ is to separate points of $C$.

\begin{lemma}\label{LemmaKochetov}
Let $V$ be a finite dimensional right comodule over a coalgebra $C$ over a field $F$. Denote by $\zeta \colon C^* \to \End_F(V)$
the homomorphism corresponding to the left $C^*$-module structure on $V$ where $C^*$ is the algebra dual to $C$.
Suppose $A$ is a dense subalgebra of $C^*$. Then $\zeta(A)=\zeta(C^*)$.
\end{lemma}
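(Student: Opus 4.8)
The plan is to reduce the statement to an elementary fact about dual spaces by exploiting that $V$ is finite dimensional. Fix a basis $v_1,\dots,v_n$ of $V$ and write the comodule structure map as $\rho(v_i)=\sum_{j=1}^n v_j\otimes c_{ji}$ with uniquely determined elements $c_{ji}\in C$, and put $W:=\langle c_{ji}\mid 1\le i,j\le n\rangle_F$, a finite dimensional subspace of $C$ (the coefficient space of $V$; I will not even need that it is a subcoalgebra). By definition of the induced left $C^*$-module structure, $\zeta(f)(v_i)=\sum_j v_j\, f(c_{ji})$, so $\zeta(f)$ depends only on the restriction $f|_W$.

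First I would check that $\ker\zeta=W^\perp$, where $W^\perp:=\{f\in C^*\mid f|_W=0\}$: since $v_1,\dots,v_n$ are linearly independent, $\zeta(f)=0$ holds iff $f(c_{ji})=0$ for all $i,j$, i.e. iff $f\in W^\perp$. Hence $\zeta$ induces an injective linear map $\bar\zeta\colon C^*/W^\perp\hookrightarrow\End_F(V)$ whose image is $\zeta(C^*)$, and whose restriction to $(A+W^\perp)/W^\perp$ has image $\zeta(A)$. Therefore $\zeta(A)=\zeta(C^*)$ is equivalent to $A+W^\perp=C^*$, and, composing with the canonical isomorphism $C^*/W^\perp\cong W^*$ given by restriction $r\colon C^*\to W^*$, to $r(A)=W^*$.

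It then remains to show $r(A)=W^*$, and this is where density of $A$ enters: since $W$ is finite dimensional, a subspace of $W^*$ equals $W^*$ as soon as it separates the points of $W$, and if $r(A)\ne W^*$ there would be a nonzero $w\in W$ with $f(w)=0$ for all $f\in A$, i.e. $w\in A^\perp$, contradicting $A^\perp=0$. Thus $r(A)=W^*$ and the lemma follows. The computations are entirely routine once the first reduction is in place; the only real point is the observation that a finite dimensional comodule involves only the finite dimensional subspace $W\subseteq C$, so that $\zeta$ factors through the surjective restriction $C^*\to W^*$ and the whole question collapses to the triviality that a point-separating subspace of the dual of a finite dimensional space is the full dual. (The hypothesis that $A$ be a \emph{subalgebra} is in fact not used in this argument; it is simply the form in which the lemma will be applied.)
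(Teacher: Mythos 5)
Your argument is correct and is essentially the paper's own proof: both reduce to the finite dimensional coefficient space $W$ (called $D$ in the paper), show that the restriction map sends $A$ onto $W^*$ using density, and conclude that every $\zeta(\varphi)$ is already realized by some element of $A$. The only cosmetic difference is that you phrase the last step via $\ker\zeta=W^\perp$ and $A+W^\perp=C^*$, whereas the paper picks, for each $\varphi\in C^*$, an element $\tilde\varphi\in A$ with the same restriction to $W$ and checks $\zeta(\varphi)=\zeta(\tilde\varphi)$ directly.
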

\begin{proof}
Let $(v_i)_{1 \leqslant i \leqslant \dim V}$ be a basis of $V$. Denote by $\rho \colon V \to V \otimes C$
the comodule map of $V$. Let $\rho(v_i)=\sum_{j=1}^{\dim V} v_j \otimes c_{ji}$ where $c_{ij} \in C$, $1 \leqslant i,j \leqslant \dim V$. Denote $$D = \langle c_{ij} \mid 1 \leqslant i,j \leqslant \dim V \rangle_F
.$$ 
 Let $\pi \colon C^* \to D^*$ be the natural projection. 
We claim that $\pi(A)=D^*$. Indeed, if $\pi(A) \ne D^*$, then there exists $c \in D$, $c\ne 0$,
 $\varphi(c) = 0$ for all $\varphi \in A$. We get a contradiction with $A^\perp = 0$.
 Suppose $\varphi \in C^*$. Choose $\tilde \varphi \in A$ such that $\pi(\varphi)=\pi(\tilde\varphi)$.
 Then
 $$\zeta(\varphi)v=\varphi(v_{(1)})v_{(0)}=\pi(\varphi)(v_{(1)})v_{(0)}=
 \pi(\tilde\varphi)(v_{(1)})v_{(0)}=\zeta(\tilde\varphi)v$$
 for every $v\in V$.
 Hence $\zeta(A)=\zeta(C^*)$.
\end{proof}

Note that~\cite[Propositions~9.2.10, 9.2.5, and Example 9.2.8]{Montgomery} imply

\begin{lemma}\label{LemmaUgDense}
Let $G$ be a connected affine algebraic group over an algebraically closed field $F$ 
of characteristic $0$ and let $\mathfrak g$ be its Lie algebra.
Then $U(\mathfrak g)$ is dense in $\mathcal O(G)^*$.
\end{lemma}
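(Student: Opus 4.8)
The plan is to unwind the definition of density and reduce everything to the Krull intersection theorem. To say that $U(\mathfrak g)$ is dense in $\mathcal O(G)^*$ means precisely that $U(\mathfrak g)^\perp=0$ in $\mathcal O(G)$, i.e.\ that the functionals lying in $U(\mathfrak g)$ separate the points of $\mathcal O(G)$. First I would make concrete the embedding $U(\mathfrak g)\hookrightarrow\mathcal O(G)^*$: each $\delta\in\mathfrak g$ acts on $\mathcal O(G)$ as a left-invariant derivation, and $u\in U(\mathfrak g)$ is sent to the functional $f\mapsto(uf)(e)$, where $e\in G$ is the identity and $\mathfrak m_e\subseteq\mathcal O(G)$ is the maximal ideal of functions vanishing at $e$. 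A differential operator of order $\leqslant n$ maps $\mathfrak m_e^{n+1}$ into $\mathfrak m_e$, so an element $u$ of degree $\leqslant n$ gives a functional killing $\mathfrak m_e^{n+1}$; and in characteristic $0$ the Poincar\'e--Birkhoff--Witt theorem together with smoothness of $G$ at $e$ shows that the degree-$\leqslant n$ part of $U(\mathfrak g)$ maps \emph{onto} $\bigl(\mathcal O(G)/\mathfrak m_e^{n+1}\bigr)^*$, by a dimension count (both spaces have dimension $\binom{d+n}{n}$ with $d=\dim G=\dim\mathfrak g$, and injectivity is seen on the associated graded via the symbol map). Hence the image of $U(\mathfrak g)$ in $\mathcal O(G)^*$ is the increasing union $\bigcup_{n}\bigl\{\varphi\in\mathcal O(G)^*\mid\varphi(\mathfrak m_e^{n+1})=0\bigr\}$ --- this is also the content of \cite[Propositions~9.2.10,~9.2.5, and Example~9.2.8]{Montgomery} --- and therefore, since each $\mathfrak m_e^{n+1}$ has finite codimension in $\mathcal O(G)$ and thus equals its double annihilator,
$$U(\mathfrak g)^\perp=\bigcap_{n\geqslant 0}\bigl\{\varphi\mid\varphi(\mathfrak m_e^{n+1})=0\bigr\}^\perp=\bigcap_{n\geqslant 0}\mathfrak m_e^{n+1}.$$

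It then remains to prove $\bigcap_{n}\mathfrak m_e^{n}=0$, and at this point I would bring in the connectedness of $G$. A connected algebraic group is irreducible as a variety, so $\mathcal O(G)$ is an integral domain. Localizing at $\mathfrak m_e$ gives a Noetherian local domain $R=\mathcal O(G)_{\mathfrak m_e}$ with maximal ideal $\mathfrak m=\mathfrak m_eR$, and the Krull intersection theorem yields $\bigcap_n\mathfrak m^n=0$. The localization map $\mathcal O(G)\hookrightarrow R$ is injective because $\mathcal O(G)$ is a domain, and it carries $\mathfrak m_e^{n}$ into $\mathfrak m^n$; hence $\bigcap_n\mathfrak m_e^{n}$ injects into $\bigcap_n\mathfrak m^n=0$, so $\bigcap_n\mathfrak m_e^{n}=0$. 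Combining this with the previous paragraph gives $U(\mathfrak g)^\perp=0$, which is the assertion. (Equivalently, one may phrase the last step as the injectivity of $\mathcal O(G)$ into its $\mathfrak m_e$-adic completion $\widehat{\mathcal O(G)_{\mathfrak m_e}}\cong F[[t_1,\dots,t_d]]$, on which the coefficient functionals --- i.e.\ the image of $U(\mathfrak g)$ --- manifestly separate points.)

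The one genuinely delicate point is the first step: identifying the image of $U(\mathfrak g)\hookrightarrow\mathcal O(G)^*$ with the \emph{full} space $\bigcup_n(\mathcal O(G)/\mathfrak m_e^{n+1})^*$ of distributions supported at $e$, and not merely with a proper subspace of it. This is exactly where characteristic $0$ is essential --- in positive characteristic the distribution algebra of $G$ is strictly larger than $U(\mathfrak g)$ --- and it rests on smoothness of algebraic groups and on PBW. Everything after that is the standard Krull intersection argument, whose only nontrivial input, that $\mathcal O(G)$ is a domain, is precisely where the hypothesis that $G$ is connected is used; and this hypothesis is genuinely needed, since for disconnected $G$ the subalgebra $U(\mathfrak g)$ does not detect the component group and fails to be dense.
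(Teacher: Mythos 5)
Your argument is correct. The paper itself gives no proof of this lemma beyond the one-line citation of \cite[Propositions~9.2.10, 9.2.5, and Example~9.2.8]{Montgomery}, so there is nothing to compare step by step; what you have written is a legitimate self-contained unwinding of exactly the facts that citation packages. The two halves of your proof are the two standard ingredients: (i) in characteristic $0$ the image of $U(\mathfrak g)$ in $\mathcal O(G)^*$ is precisely the space of distributions supported at $e$, i.e.\ $\bigcup_n\bigl(\mathcal O(G)/\mathfrak m_e^{n+1}\bigr)^*$, via PBW, smoothness, and the perfect pairing $S^n(\mathfrak g)\times\mathfrak m_e^n/\mathfrak m_e^{n+1}\to F$ (note that for the density statement only the linear span of the image matters, so you do not even need to check that the map is one of algebras); and (ii) $\bigcap_n\mathfrak m_e^n=0$ because connectedness makes $\mathcal O(G)$ a domain (connected algebraic groups are irreducible, and $\mathcal O(G)$ is reduced in characteristic $0$), so Krull intersection in the localization applies. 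Your closing remarks correctly locate where characteristic $0$ and connectedness enter, and the observation that the statement fails for disconnected $G$ is accurate, which is consistent with the paper's subsequent use of the lemma only for connected (indeed simply connected semisimple) groups.
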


Using Lemma~\ref{LemmaUgDense}, we get

\begin{lemma}\label{LemmaCodimGUg}
Let $G$ be a connected affine algebraic group over an algebraically closed field $F$ 
of characteristic $0$ and let $\mathfrak g$ be its Lie algebra.
Suppose $G$ is acting by automorphisms on a finite dimensional algebra $A$.
Then $c_n^{U(\mathfrak g)}(A) = c_n^G(A)$ for all $n \in \mathbb N$. 
\end{lemma}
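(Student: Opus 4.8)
The plan is to identify the $U(\mathfrak g)$-action and the $G$-action on $A$ as coming from the same coalgebra dual, and then apply Lemma~\ref{LemmaKochetov}. First I would recall that since $G$ is an affine algebraic group acting rationally (by automorphisms) on the finite dimensional space $A$, the algebra $A$ is a right comodule over the coalgebra $C := \mathcal O(G)$, the coordinate Hopf algebra; the $G$-action recovers the left $C^*$-module structure via $ga = \sum a_{(0)}\,\mathrm{ev}_g(a_{(1)})$, so that the homomorphism $FG \to \End_F(A)$ factors through $\zeta\colon C^* \to \End_F(A)$ (the image of $g\in G$ being $\zeta(\mathrm{ev}_g)$). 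Dually, the differential of the $G$-action is exactly the $\mathfrak g$-action by derivations, and the corresponding $U(\mathfrak g)$-module structure on $A$ factors through the same $\zeta$: the embedding $U(\mathfrak g) \hookrightarrow \mathcal O(G)^* = C^*$ sends $\mathfrak g$ to the tangent distributions at the identity, and the resulting operators on $A$ are the derivations obtained by differentiating the $G$-action. So both actions arise by restricting $\zeta\colon C^* \to \End_F(A)$ to subalgebras of $C^*$, namely to $FG$ (or rather its image $\langle \mathrm{ev}_g \mid g\in G\rangle$) and to $U(\mathfrak g)$.

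Next I would invoke Lemma~\ref{LemmaUgDense}: since $G$ is connected, $U(\mathfrak g)$ is dense in $\mathcal O(G)^*$. Likewise the span of the evaluation functionals $\mathrm{ev}_g$, $g\in G$, is dense in $\mathcal O(G)^*$, because a regular function vanishing at all points of $G$ is zero. Applying Lemma~\ref{LemmaKochetov} to the finite dimensional $C$-comodule $V = A$ with the dense subalgebra $A_0 = U(\mathfrak g) \subseteq C^* = \mathcal O(G)^*$, we get $\zeta(U(\mathfrak g)) = \zeta(C^*)$; applying it again with $A_0 = \langle \mathrm{ev}_g\rangle_F$ we get $\zeta(\langle \mathrm{ev}_g\rangle_F) = \zeta(C^*)$. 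Hence the two actions have exactly the same image in $\End_F(A)$: every operator by which an element of $U(\mathfrak g)$ acts on $A$ is also an operator by which some element of $FG$ acts, and vice versa. In particular $A$ is an algebra with generalized $\zeta(C^*)$-action, and $\zeta(U(\mathfrak g)) = \zeta(FG)$ as subsets of $\End_F(A)$.

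Finally I would transfer this equality of images to an equality of codimensions by exactly the argument used in the proof of Theorem~\ref{TheoremAssGenHopf}: since the $U(\mathfrak g)$-codimension and the $G$-codimension of $A$ each equal the dimension of the subspace of $\Hom_F(A^{\otimes n}; A)$ spanned by the $n$-linear maps representable using, respectively, the operators in $\zeta(U(\mathfrak g))$ and the operators in $\zeta(FG)$, and these two sets of operators coincide, the two codimensions agree for every $n$. (Equivalently, one constructs mutually inverse isomorphisms $F\langle X\mid U(\mathfrak g)\rangle / \Id^{U(\mathfrak g)}(A) \cong F\langle X\mid \zeta(C^*)\rangle/\Id^{\zeta(C^*)}(A) \cong F\langle X\mid G\rangle/\Id^G(A)$ of graded spaces, as in Theorem~\ref{TheoremAssGenHopf}, by replacing a formal exponent $x^h$ with $x^{\zeta(h)}$ and then lifting along the surjection onto $\zeta(C^*)$.) This gives $c_n^{U(\mathfrak g)}(A) = c_n^G(A)$ for all $n$.

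I expect the only real subtlety to be verifying cleanly that the differential of the rational $G$-action on $A$ corresponds, under the identification $\mathfrak g \subseteq U(\mathfrak g) \subseteq \mathcal O(G)^*$, to the restriction of $\zeta$ — i.e. that "the $\mathfrak g$-action by derivations = the operators $\zeta(\xi)$ for $\xi\in\mathfrak g$"; this is standard (it is the content of how left-invariant derivations of $\mathcal O(G)$ act on comodules, cf.~the references for Lemma~\ref{LemmaUgDense}), but it is the one place where one must be careful about conventions rather than just quoting a cited lemma. Everything else is a direct application of Lemmas~\ref{LemmaKochetov} and~\ref{LemmaUgDense} together with the density of point evaluations.
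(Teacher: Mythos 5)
Your proposal is correct and follows essentially the same route as the paper: realize $A$ as an $\mathcal O(G)$-comodule, note that both $FG$ and $U(\mathfrak g)$ map densely into $\mathcal O(G)^*$, apply Lemma~\ref{LemmaKochetov} to conclude the two algebras act by the same operators, and then identify each codimension with the dimension of the space of $n$-linear functions representable by those operators. The one point you flag as a subtlety (that the differential of the rational $G$-action is the restriction of $\zeta$ to $U(\mathfrak g)\subseteq\mathcal O(G)^*$) is simply taken for granted in the paper's proof, so your treatment is, if anything, slightly more careful.
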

\begin{proof}
First, we notice that $A$ is an $\mathcal O(G)$-comodule algebra. The actions of the algebras $FG$
and $U(\mathfrak g)$ on $A$ can be induced from the $\mathcal O(G)$-comodule structure
and the natural maps from $FG$ and $U(\mathfrak g)$ to $\mathcal O(G)^*$.
Obviously, the image of $FG$ is dense in $\mathcal O(G)^*$.
By Lemma~\ref{LemmaUgDense}, the image of $U(\mathfrak g)$ is dense in $\mathcal O(G)^*$.
Hence, by Lemma~\ref{LemmaKochetov}, $FG$ and $U(\mathfrak g)$ are acting on $A$ by the same operators.
To be specific in notation, assume that $A$ is associative. (If $A$ is a Lie algebra, we use the spaces
$V_n$ instead of $P_n$.)
We can treat $\frac{P^G_n}{P^G_n \cap \Id^G(A)}$ and 
$\frac{P^{U(\mathfrak g)}_n}{P^{U(\mathfrak g)}_n \cap\ \Id^{U(\mathfrak g)}(A)}$, $n\in\mathbb N$, as 
the spaces of $n$-linear functions on $A$ that can be presented, respectively, by $G$- and $U(\mathfrak g)$-polynomials. Since the functions are the same, we get
$$c_n^G(A)=\dim\frac{P^G_n}{P^G_n \cap \Id^G(A)}=\dim\frac{P^{U(\mathfrak g)}_n}{P^{U(\mathfrak g)}_n \cap \ \Id^{U(\mathfrak g)}(A)} = c^{U(\mathfrak g)}_n(A).$$
\end{proof}
\begin{proof}[Proof of Theorems~\ref{TheoremDer} and~\ref{TheoremDerSum}.]
$H$-codimensions do not change upon an extension of the base field.
The proof is analogous to the cases of ordinary codimensions of
associative~\cite[Theorem~4.1.9]{ZaiGia} and
Lie algebras~\cite[Section~2]{ZaiLie}.
Thus without loss of generality we may assume
 $F$ to be algebraically closed.
 
Using Theorem~\ref{TheoremDerAutConnection}, we replace the $\mathfrak g$-action by $G$-action
where $G$ is a simply connected semisimple affine algebraic group. By Lemma~\ref{LemmaCodimGUg}, $c^{U(\mathfrak g)}_n(A)=c^G_n(A)$ for all $n\in\mathbb N$,
and Theorems~\ref{TheoremDer} and~\ref{TheoremDerSum}
are consequences of Theorems~\ref{TheoremLieGAffAlg}, \ref{TheoremLieGAffAlgSum}, \ref{TheoremAssAffAlgG}, and \ref{TheoremAssGAffAlgSum}.
\end{proof}

\section{Graded polynomial identities and their codimensions}\label{SectionGraded}

Let $G$ be a group and $F$ be a field. Denote by $F\langle X^{\mathrm{gr}} \rangle $ the free $G$-graded associative  algebra over $F$ on the countable set $$X^{\mathrm{gr}}:=\bigcup_{g \in G}X^{(g)},$$ $X^{(g)} = \{ x^{(g)}_1,
x^{(g)}_2, \ldots \}$,  i.e. the algebra of polynomials
 in non-commuting variables from $X^{\mathrm{gr}}$.
  The indeterminates from $X^{(g)}$ are said to be homogeneous of degree
$g$. The $G$-degree of a monomial $x^{(g_1)}_{i_1} \dots x^{(g_t)}_{i_t} \in F\langle
 X^{\mathrm{gr}} \rangle $ is defined to
be $g_1 g_2 \dots g_t$, as opposed to its total degree, which is defined to be $t$. Denote by
$F\langle
 X^{\mathrm{gr}} \rangle^{(g)}$ the subspace of the algebra $F\langle
 X^{\mathrm{gr}} \rangle$ spanned
 by all the monomials having
$G$-degree $g$. Notice that $$F\langle
 X^{\mathrm{gr}} \rangle^{(g)} F\langle
 X^{\mathrm{gr}} \rangle^{(h)} \subseteq F\langle
 X^{\mathrm{gr}} \rangle^{(gh)},$$ for every $g, h \in G$. It follows that
$$F\langle
 X^{\mathrm{gr}} \rangle =\bigoplus_{g\in G} F\langle
 X^{\mathrm{gr}} \rangle^{(g)}$$ is a $G$-grading.
  Let $f=f(x^{(g_1)}_{i_1}, \dots, x^{(g_t)}_{i_t}) \in F\langle
 X^{\mathrm{gr}} \rangle$.
We say that $f$ is
a \textit{graded polynomial identity} of
 a $G$-graded algebra $A=\bigoplus_{g\in G}
A^{(g)}$
and write $f\equiv 0$
if $f(a^{(g_1)}_{i_1}, \dots, a^{(g_t)}_{i_t})=0$
for all $a^{(g_j)}_{i_j} \in A^{(g_j)}$, $1 \leqslant j \leqslant t$.
  The set $\Id^{\mathrm{gr}}(A)$ of graded polynomial identities of
   $A$ is
a graded ideal of $F\langle
 X^{\mathrm{gr}} \rangle$.

\begin{example}\label{ExampleIdGr}
 Let $G=\mathbb Z_2 = \lbrace \bar 0, \bar 1 \rbrace$,
$M_2(F)=M_2(F)^{(\bar 0)}\oplus M_2(F)^{(\bar 1)}$
where $M_2(F)^{(\bar 0)}=\left(
\begin{array}{cc}
F & 0 \\
0 & F
\end{array}
 \right)$ and $M_2(F)^{(\bar 1)}=\left(
\begin{array}{cc}
0 & F \\
F & 0
\end{array}
 \right)$. Then  $x^{(\bar 0)} y^{(\bar 0)} - y^{(\bar 0)} x^{(\bar 0)}
\in \Id^{\mathrm{gr}}(M_2(F))$.
\end{example}

Let
$P^{\mathrm{gr}}_n := \langle x^{(g_1)}_{\sigma(1)}
x^{(g_2)}_{\sigma(2)}\ldots x^{(g_n)}_{\sigma(n)}
\mid g_i \in G, \sigma\in S_n \rangle_F \subset F \langle X^{\mathrm{gr}} \rangle$, $n \in \mathbb N$.
Then the number $$c^{\mathrm{gr}}_n(A):=\dim\left(\frac{P^{\mathrm{gr}}_n}{P^{\mathrm{gr}}_n \cap \Id^{\mathrm{gr}}(A)}\right)$$
is called the $n$th \textit{codimension of graded polynomial identities}
or the $n$th \textit{graded codimension} of $A$.

\begin{remark} Let $\tilde G \supseteq G$ be another group.
Denote by $F\langle X^{\widetilde{\mathrm{gr}}} \rangle$, $\Id^{\widetilde{\mathrm{gr}}}(A)$, $P^{\widetilde{\mathrm{gr}}}_n$, 
$c^{\widetilde{\mathrm{gr}}}_n(A)$ the objects corresponding to the $\tilde G$-grading. Let $I$ be the ideal of $F\langle X^{\widetilde{\mathrm{gr}}} \rangle$
generated by $x^{(g)}_j$, $j\in\mathbb N$, $g \notin G$.
We can identify $F\langle X^{\mathrm{gr}} \rangle$ with the corresponding subalgebra in 
$F\langle X^{\widetilde{\mathrm{gr}}}  \rangle$. 
Then $$F\langle X^{\widetilde{\mathrm{gr}}} \rangle= F\langle X^{\mathrm{gr}}  \rangle \oplus I, \qquad 
\Id^{\widetilde{\mathrm{gr}}}(A) = \Id^{\mathrm{gr}}(A) \oplus I,\qquad P^{\widetilde{\mathrm{gr}}}_n=
P^{\mathrm{gr}}_n \oplus (P^{\widetilde{\mathrm{gr}}}_n \cap I),$$ $$
P^{\widetilde{\mathrm{gr}}}_n \cap  \Id^{\widetilde{\mathrm{gr}}}(A) =
(P^{\mathrm{gr}}_n \cap \Id^{\mathrm{gr}}(A)) \oplus (P^{\widetilde{\mathrm{gr}}}_n \cap I)\qquad
\text{(direct sums of subspaces).}$$ Hence $c^{\widetilde{\mathrm{gr}}}_n(A) = c^{\mathrm{gr}}_n(A)$
for all $n\in\mathbb N$. In particular, we can always replace the grading group with 
the subgroup generated by the elements corresponding to the nonzero components.
\end{remark}

The analog of Amitsur's conjecture for graded codimensions can be formulated
as follows.

\begin{conjecture} There exists
 $\PIexp^{\mathrm{gr}}(A):=\lim\limits_{n\to\infty} \sqrt[n]{c^\mathrm{gr}_n(A)} \in \mathbb Z_+$.
\end{conjecture}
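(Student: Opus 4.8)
The plan is to reduce the graded case to the $G$-action case via the standard duality between gradings by an abelian group and actions of its character group, and then invoke the already established results for associative algebras with a reductive group action. First I would pass to an algebraically closed extension of $F$: just as with ordinary, $G$-, and differential codimensions, graded codimensions are unchanged under a field extension (the relevant space $P^{\mathrm{gr}}_n/(P^{\mathrm{gr}}_n\cap\Id^{\mathrm{gr}}(A))$ can be described as a space of multilinear functions whose dimension is stable under scalar extension), so we may assume $F=\bar F$. By the Remark following Example~\ref{ExampleIdGr}, we may also replace the grading group by the subgroup generated by the degrees with nonzero component, hence assume $G$ is a finitely generated abelian group; writing $G$ as a product of a finite abelian group and a free abelian group of finite rank, its character group $\widehat G$ in the appropriate sense (the affine group scheme $G^D = \operatorname{Spec} FG$) is a diagonalizable, hence reductive, affine algebraic group over $F$.

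Next I would set up the dictionary: a $G$-grading $A=\bigoplus_{g\in G}A^{(g)}$ on the finite dimensional associative algebra $A$ is the same as an $FG$-comodule algebra structure, equivalently a rational action of the diagonalizable group $\widehat G$ on $A$ by automorphisms, where $t\in\widehat G$ acts on $A^{(g)}$ by the scalar $g(t)$. Under this correspondence, graded subspaces of $A$ coincide with $\widehat G$-invariant subspaces, and — this is the key compatibility — a multilinear graded $H$-polynomial identity translates into a multilinear $\widehat G$-polynomial identity and vice versa. Concretely, I would show that $c^{\mathrm{gr}}_n(A)=c^{\widehat G}_n(A)$ for all $n$: both sides equal the dimension of the space of those $n$-linear maps $A^{\otimes n}\to A$ that are realizable by, respectively, graded monomials and $\widehat G$-polynomials, and a graded monomial $x^{(g_1)}_{\sigma(1)}\cdots x^{(g_n)}_{\sigma(n)}$ evaluated on homogeneous elements picks out precisely the homogeneous-component projections, which are $FG$-linear combinations of the $\widehat G$-action operators (this is the finite-dimensional comodule situation, so $\mathcal O(\widehat G)^* = FG$ acts, and the relevant operators are exactly the idempotent projections onto graded components). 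This is where a small amount of care is needed, but it is essentially the same bookkeeping as in the Remark comparing the $H$-codimension with its image under $\zeta$, and as in Lemma~\ref{LemmaCodimGUg}.

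With $c^{\mathrm{gr}}_n(A)=c^{\widehat G}_n(A)$ in hand, and $\widehat G$ a reductive (indeed diagonalizable) affine algebraic group acting rationally on $A$ by automorphisms, the conjecture follows immediately: Theorem~\ref{TheoremAssAffAlgG} gives constants $C_1,C_2>0$, $r_1,r_2\in\mathbb R$, $d\in\mathbb N$ with $C_1 n^{r_1}d^n\le c^{\widehat G}_n(A)\le C_2 n^{r_2}d^n$ when $A$ is non-nilpotent, hence $\PIexp^{\mathrm{gr}}(A)=\lim_n\sqrt[n]{c^{\mathrm{gr}}_n(A)}=d\in\mathbb Z_+$; when $A$ is nilpotent, $c^{\mathrm{gr}}_n(A)=0$ for large $n$ and the exponent is $0$. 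I expect the main obstacle to be purely expository rather than mathematical: pinning down cleanly that the character group of an arbitrary abelian grading group, after reduction to the finitely generated case, is a reductive affine algebraic group over the algebraically closed field $F$ of characteristic $0$ (diagonalizable groups are reductive, and $\operatorname{char} F = 0$ ensures the finite part is linearly reductive and the representation is rational), and that the identification $c^{\mathrm{gr}}_n(A)=c^{\widehat G}_n(A)$ respects multilinearity degree-by-degree. Everything past that point is a direct appeal to Theorems~\ref{TheoremAssAffAlgG} and~\ref{TheoremAssGAffAlgSum}.
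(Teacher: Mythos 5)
Your proposal is correct and follows essentially the same route as the paper: extend scalars, reduce to a finitely generated grading group, identify the grading with a rational action of the diagonalizable (hence reductive) group $\hat G=\Hom(FG,F)$, establish $c^{\mathrm{gr}}_n(A)=c^{\hat G}_n(A)$ (this is exactly Lemma~\ref{LemmaGrToHatG} in the paper), and then apply Theorems~\ref{TheoremAssAffAlgG} and~\ref{TheoremAssGAffAlgSum}. The paper's proof of Theorems~\ref{TheoremAssGr} and~\ref{TheoremAssGrSum} is precisely this argument.
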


In 2011, E.~Aljadeff and A.~Giambruno~\cite{AljaGia} proved
 the analog Amitsur's conjecture for graded codimensions of all associative (not necessarily finite dimensional) PI-algebras provided that $G$ is finite.
 (When the algebra is finite dimensional, this result can be easily derived from 
 the corresponding result on $H$-codimensions, see~\cite[Sections 1.3--1.4]{ASGordienko3}.)
However, for finite dimensional $A$ and Abelian $G$,
we do not need $G$ to be finite.

\begin{theorem}\label{TheoremAssGr}
Let $A$ be a finite dimensional non-nilpotent associative algebra
over a field $F$ of characteristic $0$, graded by an Abelian group $G$ not necessarily finite.
 Then
there exist constants $C_1, C_2 > 0$, $r_1, r_2 \in \mathbb R$, $d \in \mathbb N$
such that $C_1 n^{r_1} d^n \leqslant c^{\mathrm{gr}}_n(A) \leqslant C_2 n^{r_2} d^n$
for all $n \in \mathbb N$.
\end{theorem}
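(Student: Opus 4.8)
The strategy is to reduce the graded case to the $H$-action case covered by Theorem~\ref{TheoremAssGenHopf}, exactly as in Lemma~\ref{LemmaCodimGUg} but now using the Hopf algebra $H = (FG)^*$ rather than $U(\mathfrak g)$. Since $G$ is abelian and we may, by the Remark following the definition of graded codimensions, replace $G$ by the (finitely generated abelian) subgroup generated by the support of the grading, we first handle the reduction to a manageable grading group and to an algebraically closed base field (graded codimensions are unchanged under field extension, by the same argument as in~\cite[Theorem~4.1.9]{ZaiGia}). A $G$-grading on a finite dimensional algebra $A$ is the same as a right $(FG)$-comodule algebra structure, hence a left $(FG)^*$-module algebra structure; write $H := \zeta((FG)^*) \subseteq \End_F(A)$ for the image, which is finite dimensional. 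As in the proof of Theorem~\ref{TheoremAssGenHopf}, one checks $c^{\mathrm{gr}}_n(A) = c^H_n(A)$ for all $n$, by identifying both quotient spaces $P^{\mathrm{gr}}_n/(P^{\mathrm{gr}}_n \cap \Id^{\mathrm{gr}}(A))$ and $P^H_n/(P^H_n \cap \Id^H(A))$ with the same space of $n$-linear functions $A^{\otimes n} \to A$ that are expressible via the grading (equivalently, via the dual action).

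Next I must verify that the hypotheses of Theorem~\ref{TheoremAssGenHopf} hold for this $H$-action. The action of $H = (FG)^*$ by the dual of comultiplication consists of the homogeneous projections $\pi_g \colon A \to A^{(g)}$, so every subspace spanned by homogeneous components is $H$-invariant; in particular $J := J(A)$ is graded (the Jacobson radical of a $G$-graded algebra over a field of characteristic $0$ is graded — this is classical, or follows from~\cite{SteVanOyst} since the grading group is finitely generated abelian hence the radical is graded), so $J$ is $H$-invariant. The semisimple quotient decomposition $A = B \oplus J$ with $B$ graded is then the graded Wedderburn~--- Mal'cev decomposition (available here because $J$ is graded), and $B = B_1 \oplus \cdots \oplus B_q$ into graded-simple components follows from Lemma~\ref{LemmaHWedderburn} applied to the Hopf algebra $FG$ (whose antipode $g \mapsto g^{-1}$ is bijective) — equivalently, from Lemma~\ref{LemmaGWedderburn}-type reasoning with $G$ acting trivially but the grading present. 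Each $B_i$ is $H$-simple, i.e. graded-simple. Thus all the hypotheses of Theorem~\ref{TheoremAssGenHopf} are met, and it yields $C_1 n^{r_1} d^n \leqslant c^H_n(A) \leqslant C_2 n^{r_2} d^n$ with $d$ the integer defined by~(\ref{EqPIexp}), which by the codimension identity is the desired estimate for $c^{\mathrm{gr}}_n(A)$.

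The main obstacle, and the only point requiring genuine care, is the passage from the grading group $G$ to a finite dimensional Hopf algebra acting by the right operators. When $G$ is infinite, $FG$ and $(FG)^*$ are infinite dimensional, so Theorem~\ref{TheoremAssGenHopf} (or its predecessor~\cite[Theorem~5]{ASGordienko3}) cannot be applied to $(FG)^*$ directly; one genuinely needs the ``easy trick'' of passing to the finite dimensional image $\zeta((FG)^*) \subseteq \End_F(A)$ and checking that codimensions are preserved — which is precisely what Theorem~\ref{TheoremAssGenHopf} was stated to accommodate (it explicitly allows $H$ not finite dimensional). A second, more routine point is to confirm that the graded-simple decomposition and the graded splitting of the radical really are instances of $H$-simplicity and $H$-module-complement statements for $H = (FG)^*$; this is a translation between the grading language and the dual-action language, carried out once and for all via the correspondence between $G$-gradings and $(FG)^*$-module structures. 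With these in hand the proof is a direct invocation of Theorem~\ref{TheoremAssGenHopf}.
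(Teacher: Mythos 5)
Your proposal is correct in substance and reaches the right conclusion, but it takes a route that is parallel to, rather than identical with, the paper's. The paper also first extends scalars and replaces $G$ by the finitely generated subgroup supporting the grading, but it then dualizes the grading to a rational action of the reductive affine algebraic group $\hat G=\Hom(FG,F)=\Hom(G,F^\times)$ by automorphisms, invokes Lemma~\ref{LemmaGrToHatG} to get $c^{\mathrm{gr}}_n(A)=c^{\hat G}_n(A)$, and then simply quotes Theorems~\ref{TheoremAssAffAlgG} and~\ref{TheoremAssGAffAlgSum}; all the structural input (the invariant Wedderburn~--- Mal'cev decomposition via Theorem~\ref{TheoremAffAlgGrWedderburn} and the decomposition into simple invariant ideals via Lemma~\ref{LemmaGWedderburn}) is thereby inherited from the $G$-action case already treated. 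You instead dualize to the algebra of homogeneous projections $H=\zeta((FG)^*)\subseteq\End_F(A)$ and feed Theorem~\ref{TheoremAssGenHopf} directly, which is legitimate and arguably more economical (it bypasses the algebraic-group formalism), but it obliges you to supply the graded structure theory yourself: that $J(A)$ is graded, that a graded complement $B$ exists, and that $B$ splits into graded-simple graded ideals. The first two you justify acceptably (the radical being graded does ultimately rest on the same $\hat G$-duality the paper uses, since $\hat G$-invariant subspaces coincide with graded ones for finitely generated $G$ over an algebraically closed field of characteristic $0$, and the radical is invariant under all automorphisms). The one genuine misstep is attributing the graded-simple decomposition to ``Lemma~\ref{LemmaHWedderburn} applied to the Hopf algebra $FG$'': a $G$-grading makes $A$ an $FG$-\emph{comodule} algebra, not an $FG$-\emph{module} algebra, so that lemma does not apply with $H=FG$. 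The correct invocation is either Lemma~\ref{LemmaHWedderburn} (or Lemma~\ref{LemmaGWedderburn}) for the group $F\hat G$ acting by automorphisms, or a direct graded repetition of its proof (a minimal graded ideal of a semisimple algebra has a graded two-sided annihilator complement). This is a fixable misattribution rather than a gap in the argument; once corrected, your verification of the hypotheses of Theorem~\ref{TheoremAssGenHopf}, together with the identification $c^{\mathrm{gr}}_n(A)=c^{H}_n(A)$ (which is~\cite[Lemma~1]{ASGordienko3} adapted to a finite support), yields the theorem with $d$ given by~(\ref{EqPIexp}), exactly as the paper's route does.
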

\begin{corollary}
The above analog of Amitsur's conjecture holds for such codimensions.
\end{corollary}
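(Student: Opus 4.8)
The plan is to reduce the statement about graded codimensions of a $G$-graded associative algebra (with $G$ an arbitrary Abelian group) to the already-proven results on $H$-codimensions, namely Theorem~\ref{TheoremAssGenHopf}, by exploiting the duality between gradings by an Abelian group and actions of its character group. First I would pass to the algebraic closure $\bar F$: as noted in the proof of Theorems~\ref{TheoremDer} and~\ref{TheoremDerSum}, graded codimensions are unchanged under extension of the base field, so we may assume $F$ is algebraically closed of characteristic $0$. Next I would use the remark following Example~\ref{ExampleIdGr}: since $A$ is finite dimensional, only finitely many homogeneous components $A^{(g)}$ are nonzero, so we may replace $G$ by the (finitely generated) subgroup generated by the support of the grading without changing $c^{\mathrm{gr}}_n(A)$. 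Thus we reduce to the case where $G$ is a finitely generated Abelian group.

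The key step is then to realize the $G$-grading as an action of a reductive affine algebraic group by automorphisms. Write $G = \mathbb{Z}^k \times G_{\mathrm{tor}}$ with $G_{\mathrm{tor}}$ finite. A $G$-grading on $A$ is the same as an $FG$-comodule algebra structure, i.e.\ an action of the affine algebraic group $\hat{G} := \operatorname{Hom}(G, F^\times)$, which is isomorphic to $(F^\times)^k \times \widehat{G_{\mathrm{tor}}}$, a (diagonalizable, hence reductive) affine algebraic group over the algebraically closed field $F$ of characteristic $0$; the action is rational and by automorphisms, with homogeneous component $A^{(g)}$ being the common eigenspace on which $\chi \in \hat G$ acts by the scalar $\chi(g)$. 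The graded identities of $A$ correspond exactly to the $\hat G$-identities of $A$: a multilinear polynomial $f(x^{(g_1)}_{1}, \ldots, x^{(g_n)}_n)$ vanishes on homogeneous substitutions iff the corresponding $\hat G$-polynomial is a $\hat G$-identity, and one checks that $P^{\mathrm{gr}}_n / (P^{\mathrm{gr}}_n \cap \Id^{\mathrm{gr}}(A))$ and $P^{\hat G}_n / (P^{\hat G}_n \cap \Id^{\hat G}(A))$ both compute the dimension of the space of $n$-linear functions $A^{\otimes n} \to A$ expressible through homogeneous (resp.\ $\hat G$-) data, so $c^{\mathrm{gr}}_n(A) = c^{\hat G}_n(A)$ for all $n$. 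Now Theorem~\ref{TheoremAssAffAlgG}, applied to the reductive group $\hat G$ acting by automorphisms (so $G_0 = \hat G$, no anti-automorphisms needed), gives the desired sandwich bound $C_1 n^{r_1} d^n \leqslant c^{\hat G}_n(A) \leqslant C_2 n^{r_2} d^n$, provided $A$ is non-nilpotent, which it is by hypothesis. Concretely: $A = B \oplus J(A)$ as a direct sum of $\hat G$-invariant subspaces by Theorem~\ref{TheoremAffAlgGrWedderburn}, $B = B_1 \oplus \dots \oplus B_q$ into $\hat G$-simple ideals by Lemma~\ref{LemmaGWedderburn}, and then Theorem~\ref{TheoremAssGenHopf} (with $H = F\hat G$, which is finite dimensional when $G_{\mathrm{tor}}$ is finite — but here $\hat G$ need not be finite, so one uses $H = \mathcal O(\hat G)^*$ acting through its image, or invokes the already-established Theorem~\ref{TheoremAssAffAlgG} directly) yields the bounds.

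I expect the main obstacle to be the bookkeeping around the grading group being \emph{infinite}: $F\hat G$ is then infinite dimensional as an algebra, so one cannot literally cite the finite-dimensional $H$-codimension theorem of \cite{ASGordienko3}. The cleanest route is to avoid Hopf-algebraic machinery at this last stage and instead invoke Theorem~\ref{TheoremAssAffAlgG}, whose proof already incorporates Theorem~\ref{TheoremAssGenHopf} via the non-finite-dimensional trick of Theorem~\ref{TheoremAssGenHopf} (replacing $H$ by $\zeta(H) \subseteq \End_F(A)$, which is automatically finite dimensional since $\dim A < \infty$). Thus the genuine content to verify carefully is just the identification $c^{\mathrm{gr}}_n(A) = c^{\hat G}_n(A)$, which follows because the free $G$-graded algebra and the relevant piece of the free $\hat G$-algebra parametrize the same multilinear functions on $A$; once this is in place, the theorem and its corollary are immediate. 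One should also double-check that a diagonalizable group is reductive in characteristic $0$ (it is, being linearly reductive) so that Theorem~\ref{TheoremAssAffAlgG} applies.
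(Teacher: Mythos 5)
Your proposal is correct and follows essentially the same route as the paper: the corollary is immediate from Theorem~\ref{TheoremAssGr}, whose proof in the paper proceeds exactly as you describe --- extend to the algebraic closure, replace $G$ by the finitely generated support subgroup, dualize to the rational action of the reductive (diagonalizable) group $\hat G$, identify $c^{\mathrm{gr}}_n(A)=c^{\hat G}_n(A)$ (this is Lemma~\ref{LemmaGrToHatG}), and invoke Theorem~\ref{TheoremAssAffAlgG}. Your closing observation that one should cite Theorem~\ref{TheoremAssAffAlgG} directly rather than a finite-dimensional $H$-codimension result is precisely how the paper handles the infinite grading group.
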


\begin{theorem}\label{TheoremAssGrSum}
Let $A=A_1 \oplus \ldots \oplus A_s$ (direct sum of graded ideals) be a finite dimensional associative
 algebra over a field $F$ of characteristic $0$ graded by an Abelian group $G$. Then $\PIexp^{\mathrm{gr}}(A)=\max_{1 \leqslant i \leqslant s}
\PIexp^{\mathrm{gr}}(A_i)$.
\end{theorem}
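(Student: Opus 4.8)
The plan is to reduce the graded statement for an Abelian group $G$ to the case of a \emph{finite} Abelian group, where the result is already available (it follows from the $H$-codimension results for finite dimensional algebras, via \cite[Sections 1.3--1.4]{ASGordienko3}, or directly from \cite{AljaGia}). The key observation is that a finite dimensional algebra $A=A_1\oplus\ldots\oplus A_s$ has only finitely many nonzero graded components (at most $\dim A$ many for each $A_i$), so the subgroup $G'\leqslant G$ generated by the supports of all the $A_i$ is a finitely generated Abelian group. By the Remark following the definition of graded codimensions, replacing $G$ by $G'$ changes neither $c^{\mathrm{gr}}_n(A)$ nor $c^{\mathrm{gr}}_n(A_i)$, and it clearly preserves the decomposition $A=A_1\oplus\ldots\oplus A_s$ as a decomposition into graded ideals. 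Hence without loss of generality $G$ is finitely generated Abelian, so $G\cong \mathbb Z^k\oplus T$ with $T$ finite.

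Next I would kill the free part. The point is that $\mathbb Z^k$ can be replaced by $(\mathbb Z/m)^k$ for $m$ large: since each $A_i$ has finitely many homogeneous components, there is a single $m\in\mathbb N$ such that the composite $G'\to \mathbb Z^k\oplus T \to (\mathbb Z/m)^k\oplus T =: \bar G$ is injective on the (finite) union of all supports. A grading of a finite dimensional algebra by $G'$ whose support injects into $\bar G$ can be viewed as a grading by $\bar G$ with the same homogeneous components, and one checks directly from the definitions that $\Id^{\mathrm{gr}}(A)$, $P^{\mathrm{gr}}_n$, and the decomposition into graded ideals all transport verbatim, so $c^{\mathrm{gr}}_n(A)$ and each $c^{\mathrm{gr}}_n(A_i)$ are unchanged; the same $m$ works simultaneously for $A$ and all of the $A_i$ since we took the union of all supports. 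Thus we may assume $G=\bar G$ is a \emph{finite} Abelian group.

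With $G$ finite and Abelian, Theorem~\ref{TheoremAssGr} applies to $A$ and to each $A_i$, furnishing $\PIexp^{\mathrm{gr}}(A)$ and $\PIexp^{\mathrm{gr}}(A_i)$ as positive integers (or $0$ in the nilpotent case, which is handled separately and trivially, since $P^{\mathrm{gr}}_n\subseteq \Id^{\mathrm{gr}}(A)$ for large $n$). It then remains to prove the additivity formula $\PIexp^{\mathrm{gr}}(A)=\max_i\PIexp^{\mathrm{gr}}(A_i)$. The inequality $\geqslant$ is immediate: for each $i$ the restriction map $P^{\mathrm{gr}}_n\to P^{\mathrm{gr}}_n/(P^{\mathrm{gr}}_n\cap\Id^{\mathrm{gr}}(A_i))$ factors through $P^{\mathrm{gr}}_n/(P^{\mathrm{gr}}_n\cap\Id^{\mathrm{gr}}(A))$ because $\Id^{\mathrm{gr}}(A)=\bigcap_i\Id^{\mathrm{gr}}(A_i)\subseteq\Id^{\mathrm{gr}}(A_i)$ (a graded polynomial vanishing on $A$ in particular vanishes on the graded ideal $A_i$), so $c^{\mathrm{gr}}_n(A)\geqslant c^{\mathrm{gr}}_n(A_i)$ for all $n$, whence $\PIexp^{\mathrm{gr}}(A)\geqslant\PIexp^{\mathrm{gr}}(A_i)$ for each $i$. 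For the reverse inequality, from $\Id^{\mathrm{gr}}(A)=\bigcap_i\Id^{\mathrm{gr}}(A_i)$ one gets an injection $P^{\mathrm{gr}}_n/(P^{\mathrm{gr}}_n\cap\Id^{\mathrm{gr}}(A))\hookrightarrow \bigoplus_{i=1}^s P^{\mathrm{gr}}_n/(P^{\mathrm{gr}}_n\cap\Id^{\mathrm{gr}}(A_i))$, so $c^{\mathrm{gr}}_n(A)\leqslant\sum_{i=1}^s c^{\mathrm{gr}}_n(A_i)\leqslant s\cdot\max_i c^{\mathrm{gr}}_n(A_i)$; taking $n$th roots and letting $n\to\infty$, the constant factor $s$ disappears and $\PIexp^{\mathrm{gr}}(A)\leqslant\max_i\PIexp^{\mathrm{gr}}(A_i)$. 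Combining the two inequalities gives the result. The main obstacle is the bookkeeping in the first two paragraphs — verifying carefully that passing to a finite quotient group of the support does not alter the graded codimensions or disturb the ideal decomposition — but this is entirely routine once one has the Remark on changing the grading group; the additivity argument itself is the same elementary sandwich used for ordinary and $H$-codimensions.
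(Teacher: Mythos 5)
Your proposal is correct, but it reaches the conclusion by a genuinely different route than the paper. The paper proves Theorems~\ref{TheoremAssGr} and~\ref{TheoremAssGrSum} in one stroke: after reducing to a finitely generated grading group, it applies the duality of Lemma~\ref{LemmaGrToHatG} to convert the grading into a rational action of the reductive affine algebraic group $\hat G=\Hom(FG,F)$ and then quotes Theorem~\ref{TheoremAssGAffAlgSum}, which is itself ``an immediate consequence of~(\ref{EqPIexp})'': because the $A_i$ are invariant ideals with $A_iA_j=0$ for $i\ne j$, every nonzero product $B_{i_1}JB_{i_2}J\cdots JB_{i_r}$ lies inside a single summand, so the maximum defining $d$ for $A$ equals the maximum of the corresponding quantities for the $A_i$. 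You instead push the reduction one step further --- from a finitely generated Abelian group to a finite quotient that is injective on the (finite) support, which is legitimate since the graded codimensions depend only on which homogeneous components the variables range over --- and then run the elementary sandwich $\max_i c^{\mathrm{gr}}_n(A_i)\leqslant c^{\mathrm{gr}}_n(A)\leqslant s\cdot\max_i c^{\mathrm{gr}}_n(A_i)$, which rests on $P^{\mathrm{gr}}_n\cap\Id^{\mathrm{gr}}(A)=\bigcap_i\bigl(P^{\mathrm{gr}}_n\cap\Id^{\mathrm{gr}}(A_i)\bigr)$ (mixed multilinear substitutions vanish on a direct sum of ideals) and on the existence of each $\PIexp^{\mathrm{gr}}(A_i)$, available from the finite-group literature or from Theorem~\ref{TheoremAssGr}. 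Your approach is more elementary in that it never needs the explicit formula for the exponent, and the sandwich even yields the existence of $\PIexp^{\mathrm{gr}}(A)$ as a byproduct, so no circularity with Theorem~\ref{TheoremAssGr} arises; the paper's approach buys the explicit formula~(\ref{EqPIexp}) for $\PIexp^{\mathrm{gr}}(A)$ and keeps the argument uniform with the $G$-action and differential cases treated elsewhere in the paper.
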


%

To prove these theorems, we need the following well known facts.
Let $G$ be an Abelian group. Denote by $\hat G = \Hom(G, F^\times)$ the group of homomorphisms from $G$ into the multiplicative group $F^\times$ of the field $F$. Then each $G$-graded space $V=\bigoplus_{g\in G} V^{(g)}$ becomes an $F\hat G$-module:
$\chi v^{(g)} = \chi(g)v^{(g)}$ for all $\chi \in \hat G$ and $v^{(g)} \in V^{(g)}$.
Moreover, if $G$ is finitely generated, $F$ is algebraically closed of characteristic $0$, and
$V$ is finite dimensional, then every $\hat G$-invariant subspace in $V$ is $G$-graded.

The following  lemma is completely analogous to~\cite[Lemma~24]{ASGordienko5}:

\begin{lemma}\label{LemmaGrToHatG}
Let $A$ be a finite dimensional associative algebra
over an algebraically closed field $F$ of characteristic $0$, graded by a finitely generated Abelian group $G$.
Consider the $\hat G$-action on $A$ defined above. Then $c^{\mathrm{gr}}_n(A)=c^{\hat G}_n(A)$ for all $n\in \mathbb N$.
\end{lemma}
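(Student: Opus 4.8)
The plan is to reduce the statement about $G$-graded codimensions to a statement about $\hat G$-codimensions by exhibiting an explicit linear isomorphism between the relevant quotient spaces, compatible with the natural identifications of multilinear polynomials with $n$-linear maps on $A$. First I would set up the dictionary: since $G$ is finitely generated Abelian, $F$ algebraically closed of characteristic $0$, and $A$ finite dimensional, the discussion preceding the lemma gives that the $\hat G$-action $\chi v^{(g)} = \chi(g) v^{(g)}$ makes $A$ an $F\hat G$-module algebra (each $\chi\in\hat G$ acts by an automorphism, since $\chi(gh)=\chi(g)\chi(h)$), and conversely every $\hat G$-invariant subspace of $A$ is $G$-graded. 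This is exactly the duality exploited in~\cite[Lemma~24]{ASGordienko5} for Lie algebras, so the proof is a transcription of that argument to the associative setting.

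The key step is to identify both $\dfrac{P^{\mathrm{gr}}_n}{P^{\mathrm{gr}}_n \cap \Id^{\mathrm{gr}}(A)}$ and $\dfrac{P^{\hat G}_n}{P^{\hat G}_n \cap \Id^{\hat G}(A)}$ with the same subspace of $\Hom_F(A^{\otimes n}, A)$. On the one hand, a multilinear graded polynomial $f(x^{(g_1)}_{\sigma(1)},\ldots,x^{(g_n)}_{\sigma(n)})$ determines, for each choice of homogeneous arguments, an element of $A$; linearizing over all homogeneous components $a_i = \sum_{g} a_i^{(g)}$ shows that the graded identities of $A$ are precisely those $f$ whose associated $n$-linear map vanishes, so $\dfrac{P^{\mathrm{gr}}_n}{P^{\mathrm{gr}}_n \cap \Id^{\mathrm{gr}}(A)}$ embeds into $\Hom_F(A^{\otimes n},A)$. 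On the other hand, the same quotient for $\hat G$-identities embeds into $\Hom_F(A^{\otimes n},A)$ by the remark following the definition of $c^H_n$. I would then check that the two images coincide: a monomial $x^{(g_1)}_{i_1}\cdots x^{(g_n)}_{i_n}$ acts on arguments $a_1,\ldots,a_n$ by first projecting each $a_j$ onto its homogeneous component of degree $g_{i_j}$ and multiplying; and such a "project-then-multiply" operator can be written as an $F\hat G$-polynomial because the projection onto the $g$-component of $A$ lies in $\zeta(F\hat G) \subseteq \End_F(A)$ — this is where finite generation of $G$ and algebraic closedness of $F$ enter, guaranteeing that the idempotents cutting out the homogeneous components are realized by elements of $F\hat G$ (equivalently, the characters of $\hat G$ on the finitely many relevant cosets separate them). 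Conversely any $x^\chi_i$ acts as a diagonal operator with respect to the $G$-grading, hence any $\hat G$-polynomial is a linear combination of project-then-multiply operators, i.e. lies in the span of graded monomials' actions.

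Once both quotient spaces are identified with the same subspace of $\Hom_F(A^{\otimes n},A)$, taking dimensions gives $c^{\mathrm{gr}}_n(A) = c^{\hat G}_n(A)$ for all $n$. The main obstacle, and the only place requiring care, is the precise realization of the homogeneous-component projections inside $\zeta(F\hat G)$: one must argue that for the finite set $S \subseteq G$ of degrees actually occurring in $A$, the characters in $\hat G$, restricted to $S$, separate the points of $S$ and span enough of the function space on $S$ to produce the indicator idempotents — which holds because over an algebraically closed field of characteristic $0$ a finitely generated Abelian group has enough characters into $F^\times$. Everything else — the linearization argument, the direct-sum bookkeeping of homogeneous components, and the translation between multilinear polynomials and $n$-linear maps — is routine and parallel to the Lie case in~\cite[Lemma~24]{ASGordienko5}.
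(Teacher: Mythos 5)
Your proposal is correct and follows essentially the same route as the paper, which gives no independent argument but defers to the analogous Lie-algebra lemma \cite[Lemma~24]{ASGordienko5}: both codimension quotients are identified with the same span of $n$-linear maps in $\Hom_F(A^{\otimes n},A)$, using that each $\chi\in\hat G$ acts as a linear combination of the homogeneous projections and, conversely, that these projections lie in $\zeta(F\hat G)$ because characters separate the finitely many degrees in the support of $A$. Your treatment of the only delicate point --- producing the indicator idempotents from characters over an algebraically closed field of characteristic $0$ --- is exactly what is needed.
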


\begin{proof}[Proof of Theorems~\ref{TheoremAssGr} and~\ref{TheoremAssGrSum}]
Graded codimensions do not change upon an extension of the base field.
The proof is analogous to the case of ordinary codimensions~\cite[Theorem~4.1.9]{ZaiGia}.
Thus without loss of generality we may assume
 $F$ to be algebraically closed.
 
 Since the set $\lbrace g \in G \mid A^{(g)}\ne 0 \rbrace$ is finite, we may assume the grading group $G$ to be finitely generated. Hence $A$ is an $FG$-comodule algebra where $FG$ is a finitely generated commutative algebra that does not contain nonzero nilpotent elements.
 Therefore, $A$ is a rational representation of the reductive affine algebraic group $\hat G = \Hom(FG, F)$
 where $\Hom(FG, F)$ is the set of unitary algebra homomorphisms $FG \to F$.
 By Lemma~\ref{LemmaGrToHatG}, $c_n^\mathrm{gr}(A)=c_n^{\hat G}(A)$.
 Now we use Theorems~\ref{TheoremAssAffAlgG} and~\ref{TheoremAssGAffAlgSum}.
\end{proof}

\section{Formulas for PI-exponents}\label{SectionPIExp}

\subsection{Associative algebras}
Let $F$ be an algebraically
 closed field of characteristic $0$.
 Replacing in~(\ref{EqPIexp}) the $H$-invariant subalgebra and $H$-simple ideals by, respectively, a graded subalgebra
 and graded simple ideals, we obtain the formula for~$\PIexp^{\mathrm{gr}}(A)$ under the assumptions
 of Theorem~\ref{TheoremAssGr}. Replacing the $H$-invariant subalgebra and $H$-simple ideals by, respectively, a $G$-invariant subalgebra
 and $G$-simple ideals, we obtain the formula for $\PIexp^G(A)$ under the assumptions
 of Theorem~\ref{TheoremAssAffAlgG}. Analogously,
 we obtain the formula for $\PIexp^{U(\mathfrak g)}(A)$ under the assumptions
 of Theorem~\ref{TheoremDer}. We claim that $\PIexp^{U(\mathfrak g)}(A) = \PIexp(A)$
 and $\PIexp^G(A) = \PIexp(A)$ if $G$ is a connected group.
 
 \begin{lemma}\label{LemmaDerSimpleSum}
 Let $B=B_1 \oplus \ldots \oplus B_q$ (direct sum of ideals) be an algebra not necessarily associative
 over a field $F$ where $B_i$ are simple algebras. Suppose $\delta$ is a derivation of $B$. Then all $B_i$ are invariant under $\delta$.
 \end{lemma}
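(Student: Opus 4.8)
The plan is to show that for a direct sum of simple ideals $B = B_1 \oplus \dots \oplus B_q$, every derivation preserves each summand. The key observation is that the $B_i$ are precisely the minimal ideals of $B$, and more to the point, for each $i$ the projection $B \to B_i$ along $\bigoplus_{j \ne i} B_j$ is multiplication-compatible enough that the derivation cannot move an element of $B_i$ out of $B_i$.

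First I would fix $i$ and let $\pi_i \colon B \to B_i$ denote the projection onto $B_i$ with kernel $B_i' := \bigoplus_{j \ne i} B_j$. Take $b \in B_i$; I want to show $\delta(b) \in B_i$, equivalently $\pi_j(\delta(b)) = 0$ for all $j \ne i$. Since $B_iB_j = B_jB_i = 0$ whenever $i \ne j$, applying $\delta$ to $bc = 0$ for $c \in B_j$ gives $\delta(b)c + b\,\delta(c) = 0$; projecting onto the $B_j$-component (using that $B_j$ is an ideal, so $\pi_j$ is an algebra homomorphism on products landing in $B_j$) yields $\pi_j(\delta(b))\,c = 0$ for all $c \in B_j$, and symmetrically $c\,\pi_j(\delta(b)) = 0$. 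Thus $\pi_j(\delta(b))$ lies in the two-sided annihilator of $B_j$ inside $B_j$.

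The remaining step is to argue that the annihilator of a simple algebra in itself is zero, so that $\pi_j(\delta(b)) = 0$. For this I would use that $B_j$ is simple: the set $\mathrm{Ann}(B_j) := \{a \in B_j \mid aB_j = B_j a = 0\}$ is an ideal of $B_j$, hence is either $0$ or all of $B_j$; if it were all of $B_j$ then $B_j B_j = 0$, contradicting simplicity (a simple algebra is by definition not the zero algebra on a one-dimensional... more precisely $B_j^2 = B_j \ne 0$ for a simple algebra, or one excludes the $1$-dimensional zero algebra by convention). Hence $\mathrm{Ann}(B_j) = 0$, so $\pi_j(\delta(b)) = 0$ for every $j \ne i$, giving $\delta(b) \in B_i$. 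Running over all $i$ shows every $B_i$ is $\delta$-invariant.

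The only genuine subtlety — really a bookkeeping point rather than an obstacle — is the convention issue: "simple algebra" must be taken to mean $B_j^2 \ne 0$ (equivalently, $B_j$ is not the $1$-dimensional algebra with zero multiplication), which is the standard convention and is what makes $\mathrm{Ann}(B_j) = 0$. In the associative case $B_j^2 = B_j$ is automatic for a simple ring, and in the Lie case simplicity by definition excludes the abelian case, so the convention is harmless. No finite-dimensionality or characteristic hypothesis is needed.
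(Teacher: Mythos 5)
Your proof is correct and follows essentially the same route as the paper's: decompose $\delta(b)$ into its components, differentiate the relation $bc=0$ for $c\in B_j$ ($j\ne i$) to see that the $B_j$-component of $\delta(b)$ annihilates $B_j$ on both sides, and conclude it vanishes by simplicity. You merely spell out more explicitly the final step (the two-sided annihilator is an ideal, hence zero since $B_j^2\ne 0$), which the paper compresses into ``since $B_j$ is simple, $b_j=0$.''
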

 \begin{proof}
 Let $1 \leqslant i \leqslant q$ and $a \in B_i$. Then $\delta(a)=\sum_{i=1}^q b_i$
 where $b_j \in B_j$, $1 \leqslant j \leqslant q$.
 For all $b \in B_j$, $j\ne i$, we have $$0=\delta(ab)=\delta(a)b+a\delta(b)=b_jb+a\delta(b).$$
 Hence $b_jb = - a\delta(b) \in B_i$ and $b_j b = 0$. Analogously, $b b_j = 0$ for all $b\in B_j$.
 Since $B_j$ is simple, we get $b_j = 0$ for all $j\ne i$ and $\delta(a)\in B_i$.
  \end{proof}
  \begin{lemma}\label{LemmaDerTrivIdeals}
  Let $A$ be a finite dimensional associative or Lie algebra over a field $F$ of characteristic $0$
  and let $\mathfrak g$ be a Lie algebra acting on $A$ by derivations. Suppose $A$ and $\lbrace 0 \rbrace$ are the only $\mathfrak g$-invariant ideals in $A$. Then either $A$ is semisimple or $A^2=0$.
  \end{lemma}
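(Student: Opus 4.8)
The plan is to analyze the solvable radical $R$ of $A$ (or the Jacobson radical, in the associative case) and show that $\mathfrak g$-invariance forces it to be trivial unless $A^2=0$. First I would observe that $R$ (respectively $J(A)$) is invariant under every derivation of $A$: this is a standard fact, since the solvable radical and the Jacobson radical are characterized intrinsically, and a short computation shows $\delta(R)\subseteq R$ for any derivation $\delta$ (one checks that $R+\delta(R)$ is still a solvable ideal, using that $[\delta(R),R]\subseteq R$ and iterating on the derived series). Consequently $R$ is a $\mathfrak g$-invariant ideal of $A$, so by hypothesis either $R=0$ or $R=A$.

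If $R=0$, then $A$ is semisimple and we are done. So suppose $R=A$, i.e.\ $A$ is solvable (in the associative case, $A=J(A)$ is nilpotent). The goal is then to deduce $A^2=0$. Consider $A^2$, which is again a $\mathfrak g$-invariant ideal of $A$ (it is invariant under all derivations since $\delta(ab)=(\delta a)b+a(\delta b)\in A^2$, and it is clearly an ideal). By hypothesis, either $A^2=0$, as desired, or $A^2=A$. But $A^2=A$ is impossible for a nonzero solvable (resp.\ nilpotent) algebra: in the Lie case, $A^2=A$ iterated gives $A=A^2=A^3=\cdots$, contradicting solvability unless $A=0$ (and $A=0$ trivially satisfies $A^2=0$); in the associative case, $A=A^2$ iterated contradicts nilpotency of $J(A)$ unless $A=0$. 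Hence $A^2=0$.

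The main subtlety to get right is the invariance of the radical under derivations, and ensuring the argument is uniform across the associative and Lie cases. In the associative case one uses $R=J(A)$, which is the maximal nilpotent ideal; $\delta(J)\subseteq J$ follows because $J+\delta(J)$ is a nilpotent ideal (one estimates products of elements from $J$ and $\delta(J)$ using the Leibniz rule and the nilpotency of $J$, or invokes that $J$ is the radical and derivations preserve it). In the Lie case one uses $R$ = solvable radical and the same Leibniz-type estimate on the derived series. I expect this radical-invariance step to be the only place requiring genuine care; once it is in place, the two applications of the ``only trivial $\mathfrak g$-invariant ideals'' hypothesis — first to $R$, then to $A^2$ — finish the proof quickly.

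Alternatively, and perhaps more cleanly, one can phrase the whole argument through Theorems~\ref{TheoremDerWedderburn} and~\ref{TheoremDerLevi}: these give a $\mathfrak g$-invariant decomposition $A=B\oplus R$ with $B$ semisimple and $R$ the radical. Since $R$ is then a $\mathfrak g$-submodule and an ideal, it is a $\mathfrak g$-invariant ideal, so $R=0$ (giving $A=B$ semisimple) or $R=A$ (giving $B=0$, so $A=R$ is solvable/nilpotent, and then $A^2\subsetneq A$ is a proper $\mathfrak g$-invariant ideal forcing $A^2=0$). I would present this route as the primary one, since it lets us cite the structure theorems already established and avoids re-deriving derivation-invariance of the radical from scratch.
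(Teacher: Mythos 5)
Your first argument is exactly the paper's proof: the Jacobson radical (associative case) or the solvable radical (Lie case) is invariant under all derivations, hence is a $\mathfrak g$-invariant ideal and must be $0$ or $A$; in the latter case $A^2$ (resp.\ $[A,A]$) is a proper $\mathfrak g$-invariant ideal of the nilpotent (resp.\ solvable) algebra $A$, hence zero. The paper simply cites Dixmier and Jacobson for the derivation-invariance of the radicals rather than re-deriving it, but the logic is identical and your sketch of that step is fine.

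However, the route you say you would present as primary --- deducing the result from Theorems~\ref{TheoremDerWedderburn} and~\ref{TheoremDerLevi} --- does not work in the stated generality. Those theorems assume $\mathfrak g$ is a \emph{finite dimensional semisimple} Lie algebra and $F$ algebraically closed, whereas Lemma~\ref{LemmaDerTrivIdeals} is asserted for an arbitrary Lie algebra $\mathfrak g$ acting by derivations over an arbitrary field of characteristic $0$; the lemma is later used (via Lemma~\ref{LemmaDerSimple} and Theorem~\ref{TheoremUgCrSimple}) precisely in that general setting, so the extra hypotheses cannot be absorbed. Moreover, the only input you actually need from the decomposition is the $\mathfrak g$-invariance of the radical, which is itself a prerequisite for proving those structure theorems, so nothing is saved. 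Keep your first, elementary argument as the proof.
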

  \begin{proof} Suppose $A$ is associative. By~\cite[Lemma 3.2.2]{Dixmier}, the Jacobson radical (which coincides with the prime radical) of a finite dimensional associative algebra is invariant under all derivations. Hence either $J(A)=0$ and the lemma is proved or $A=J(A)$ is a nilpotent algebra. In the last case
  $A^2 \ne A$ is a $\mathfrak g$-invariant ideal. Hence $A^2=0$.
  
  Suppose $A$ is a Lie algebra.
  Recall that by~\cite[Chapter~III, Section~6, Theorem~7]{JacobsonLie} 
 the solvable radical $R$ of $A$ is invariant under all derivations. Hence either $R=0$ and $A$ is semisimple
 or $A$ is solvable. In the last case $[A,A]\ne A$ is invariant under all derivations. Hence $[A,A]=0$.
  \end{proof}
  \begin{lemma}\label{LemmaDerSimple}
  If $B$ is a $\mathfrak g$-simple finite dimensional associative or Lie algebra over a field $F$ of characteristic $0$ where $\mathfrak g$ is a Lie algebra acting on $B$ by derivations, then $B$ is a simple algebra.
  \end{lemma}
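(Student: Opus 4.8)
The plan is to deduce this from Lemma~\ref{LemmaDerTrivIdeals} together with Lemma~\ref{LemmaDerSimpleSum}. Suppose $B$ is $\mathfrak g$-simple. Since the only $\mathfrak g$-invariant ideals of $B$ are $B$ and $\{0\}$, Lemma~\ref{LemmaDerTrivIdeals} gives two cases: either $B$ is semisimple, or $B^2=0$. In the latter case, $B^2=0$ forces \emph{every} subspace of $B$ to be an ideal, and in particular every subspace is $\mathfrak g$-invariant only if it happens to be a $\mathfrak g$-submodule; but $\mathfrak g$-simplicity of $B$ together with $B^2=0$ would force $\dim B = 1$ (any proper nonzero $\mathfrak g$-submodule would be a proper nonzero $\mathfrak g$-invariant ideal), and a one-dimensional algebra with zero multiplication is not usually regarded as $\mathfrak g$-simple under the standing convention that simple (and $\mathfrak g$-simple) algebras have nonzero multiplication. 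So this case is excluded, and $B$ is semisimple.

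It remains to upgrade ``semisimple'' to ``simple.'' Write $B = B_1 \oplus \ldots \oplus B_q$ (direct sum of ideals) where the $B_i$ are simple algebras; in the associative case this is Wedderburn's theorem, and in the Lie case it is the standard decomposition of a semisimple Lie algebra into simple ideals. Each element $\delta \in \mathfrak g$ acts on $B$ by a derivation, so by Lemma~\ref{LemmaDerSimpleSum} every $B_i$ is invariant under $\delta$; hence each $B_i$ is a $\mathfrak g$-invariant ideal of $B$. By $\mathfrak g$-simplicity of $B$, each nonzero $B_i$ must equal $B$, which forces $q=1$. Therefore $B=B_1$ is simple.

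The one point requiring care — the ``main obstacle,'' such as it is — is the degenerate case $B^2=0$ in the first paragraph: one has to invoke the convention (implicit in the word ``simple'' throughout the paper) that $\mathfrak g$-simple algebras have nonzero multiplication, so that $B^2 = 0$ cannot occur; otherwise the statement as phrased would be false for the one-dimensional abelian algebra. Everything else is a direct application of the two preceding lemmas, and both the associative and Lie cases are handled uniformly once the semisimple decomposition into simple ideals is in hand.
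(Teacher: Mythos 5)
Your proof is correct and follows essentially the same route as the paper: Lemma~\ref{LemmaDerTrivIdeals} gives semisimplicity, the decomposition into simple ideals plus Lemma~\ref{LemmaDerSimpleSum} gives $\mathfrak g$-invariance of each summand, and $\mathfrak g$-simplicity forces $q=1$. One parenthetical claim is false --- $B^2=0$ together with $\mathfrak g$-simplicity does \emph{not} force $\dim B=1$ (take $B=F^2$ with zero product and $\mathfrak g=\mathfrak{sl}_2(F)$ acting by its standard representation; every linear map is a derivation of an algebra with zero multiplication, so the $\mathfrak g$-invariant ideals are exactly the submodules) --- but this does not damage the argument, since the standing convention that ($\mathfrak g$-)simple algebras have nonzero multiplication already excludes the case $B^2=0$ regardless of dimension, which is exactly how the paper implicitly handles it.
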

  \begin{proof}
  By Lemma~\ref{LemmaDerTrivIdeals}, $B$ is semisimple and $B=B_1 \oplus \ldots \oplus B_q$ (direct sum of ideals) for some simple algebras $B_i$. By Lemma~\ref{LemmaDerSimpleSum}, each $B_i$ is $\mathfrak g$-invariant. Hence $q=1$ and $B=B_1$.
   \end{proof}
  \begin{lemma}\label{LemmaGSimple}
  If $B$ is a $G$-simple finite dimensional associative or Lie algebra  over a field $F$ of characteristic $0$ where $G$ is a connected affine algebraic group rationally acting on $B$ by automorphisms and anti-automorphisms, then $B$ is a simple algebra and $G$ is acting by automorphisms only.
  \end{lemma}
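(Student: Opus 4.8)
The plan is to deal with the two assertions in turn: first showing that $G$ acts by automorphisms only, and then transferring $G$-simplicity to $\mathfrak g$-simplicity so that Lemma~\ref{LemmaDerSimple} applies.

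First I would prove $G=G_0$, where $G_0\subseteq G$ denotes the subgroup of elements acting by automorphisms. (If $B$ admits no anti-automorphism then $G_0=G$ automatically, so we may assume it does.) Exactly as in the proof of Theorem~\ref{TheoremAffAlgGrWedderburn}, the set $\Aut(B)\subseteq\GL(B)$ is Zariski closed, being cut out by the bilinear equations $(ab)^\psi=a^\psi b^\psi$; since the action is rational, the corresponding homomorphism $\varphi\colon G\to\GL(B)$ is a morphism of algebraic groups, so $G_0=\varphi^{-1}(\Aut(B))$ is a closed subgroup of $G$ of index $\leqslant 2$. A connected algebraic group has no proper closed subgroup of finite index: if the index were $2$, then $G_0$ would be normal and $G/G_0$ a nontrivial finite connected algebraic group, which is impossible. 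Hence $G_0=G$ and $G$ acts by automorphisms.

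Next, let $\mathfrak g$ be the Lie algebra of $G$ and consider the differential of the $G$-action on $B$. Writing the multiplication as an element $\mu\in B^*\otimes B^*\otimes B$ as in the proof of Theorem~\ref{TheoremDerAutConnection}, the fact that $G$ acts by automorphisms means $G\mu=\mu$; differentiating the (constant) orbit map $g\mapsto g\mu$ at the identity gives $\mathfrak g\mu=0$, i.e. $\mathfrak g$ acts on $B$ by derivations. Moreover, by \cite[Theorem~13.2]{HumphreysAlgGr} (applied over $\bar F$ and then descended to $F$, using that a subspace defined over $F$ is $G$-invariant, resp. $\mathfrak g$-invariant, iff its scalar extension is), the $G$-invariant subspaces of $B$ coincide with the $\mathfrak g$-submodules. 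In particular the $G$-invariant ideals of $B$ are precisely its $\mathfrak g$-invariant ideals, and since the condition $B^2\ne 0$ does not involve $G$ or $\mathfrak g$, the algebra $B$ is $\mathfrak g$-simple. Then, since $B$ is a finite dimensional $\mathfrak g$-simple associative or Lie algebra over a field of characteristic $0$ with $\mathfrak g$ acting by derivations, Lemma~\ref{LemmaDerSimple} yields that $B$ is a simple algebra, which completes the proof.

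The only point requiring care is the connectedness reduction in the first step: one must be sure that the rationality hypothesis genuinely makes $\varphi$ a morphism of algebraic groups and that $\Aut(B)$ is genuinely Zariski closed in $\GL(B)$, so that $G_0$ is closed and the connectedness of $G$ forces $G_0=G$. Everything afterwards is a formal matching of invariant ideals between $G$ and its Lie algebra, followed by an appeal to Lemma~\ref{LemmaDerSimple}.
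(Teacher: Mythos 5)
Your proof is correct, and it splits naturally into a part that coincides with the paper's and a part that is genuinely different. The first half --- $G_0=\varphi^{-1}(\Aut(B))$ is a closed subgroup of index at most $2$, so connectedness forces $G=G_0$ --- is exactly the paper's argument for the second assertion (phrased there as the triviality of the induced homomorphism $G\to\Aut^{*}(B)/\Aut(B)\cong\mathbb Z_2$). For the simplicity claim you diverge: the paper stays at the level of the group, noting that the radical is invariant under all automorphisms and anti-automorphisms so that $G$-simplicity forces $B$ to be semisimple, writing $B=B_1\oplus\dots\oplus B_q$ with the $B_i$ the unique simple ideals, observing that $G$ permutes them via a homomorphism $G\to S_q$ whose fibres are closed, and concluding from connectedness that each $B_i$ is $G$-stable, whence $q=1$. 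You instead differentiate the action, note that the Lie algebra $\mathfrak g$ of $G$ acts by derivations, use \cite[Theorem~13.2]{HumphreysAlgGr} to identify $G$-invariant with $\mathfrak g$-invariant subspaces (legitimate precisely because $G$ is connected), and hand the problem to Lemma~\ref{LemmaDerSimple}. Both routes are sound and ultimately rest on the same two facts --- invariance of the radical and uniqueness of the simple components, the latter hidden inside Lemmas~\ref{LemmaDerTrivIdeals} and~\ref{LemmaDerSimpleSum} in your version. The paper's argument is the more elementary and self-contained of the two and needs no passage to $\bar F$; yours buys economy by reusing the derivation machinery, at the cost of the descent step (which you correctly flag, and which is harmless in characteristic $0$) needed to invoke Humphreys' theorem when $F$ is not algebraically closed.
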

  \begin{proof} Since the radicals are invariant under all automorphisms and anti-automorphisms, $B$ is semisimple and $B=B_1 \oplus \ldots \oplus B_q$  (direct sum of ideals) for some simple algebras $B_i$.
  By~\cite[Chapter~III, Section~5, Theorem~4]{JacobsonLie},
  $B_i$ are the only simple ideals of $B$. Hence there exists a homomorphism
  $\varphi \colon G \to S_n$ such that $B_i^g = B_{\varphi(g)(i)}$
  for all $1 \leqslant i \leqslant q$ and $g\in G$ where $S_n$ is the $n$th symmetric group.
  Thus $G$ is the disjoint union of closed sets corresponding to different $\varphi(g) \in S_n$.
  Since $G$ is connected, we get $B_i^g = B_i$ for all $1 \leqslant i \leqslant q$ and $g\in G$.
  Hence $q=1$, $B=B_1$ and $B$ is simple.
  
  If $\Aut(B)\ne \Aut^*(B)$, then we have a homomorphism $G \to \Aut^*(B)/\Aut(B) \cong \mathbb Z_2$,
  which is again trivial. Hence $G$ is acting by automorphisms only.
  \end{proof}
  
  \begin{theorem}\label{TheoremAssDerPIexpEqual}
  Let $A$ be a finite dimensional associative algebra
over a field $F$ of characteristic $0$. Suppose a finite dimensional
semisimple Lie algebra $\mathfrak g$ acts on $A$ by derivations.
Then $\PIexp^{U(\mathfrak g)}(A)=\PIexp(A)$.
  \end{theorem}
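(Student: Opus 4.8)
The plan is to show that the differential codimensions $c^{U(\mathfrak g)}_n(A)$ coincide with the ordinary codimensions $c_n(A)$, which immediately yields the equality of the exponents. By the same field-extension argument used in the proof of Theorems~\ref{TheoremDer} and~\ref{TheoremDerSum}, I may assume $F$ algebraically closed. If $A$ is nilpotent, both exponents are $0$ (all sufficiently high codimensions vanish), so I may also assume $A$ is non-nilpotent, in which case $\PIexp^{U(\mathfrak g)}(A)$ exists by Theorem~\ref{TheoremDer} and $\PIexp(A)$ exists by the theorem of Giambruno and Zaicev. Since $\Id(A)\subseteq \Id^{U(\mathfrak g)}(A)$ under the obvious inclusion of $P_n$ into $P^{U(\mathfrak g)}_n$, we always have $c_n(A)\leqslant c^{U(\mathfrak g)}_n(A)$, hence $\PIexp(A)\leqslant\PIexp^{U(\mathfrak g)}(A)$; the real content is the reverse inequality.

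To get the reverse inequality I would compare the two explicit formulas for the exponents. By the discussion at the start of Section~\ref{SectionPIExp}, $\PIexp^{U(\mathfrak g)}(A)$ is given by formula~(\ref{EqPIexp}) applied to the $U(\mathfrak g)$-invariant Wedderburn--Mal'cev decomposition $A=B\oplus J$ of Theorem~\ref{TheoremDerWedderburn}, with $B=B_1\oplus\cdots\oplus B_q$ a direct sum of $U(\mathfrak g)$-simple ideals (these exist by Lemma~\ref{LemmaHWedderburn} applied to $H=U(\mathfrak g)$, whose antipode is bijective), namely as the maximum dimension of $B_{i_1}\oplus\cdots\oplus B_{i_r}$ over chains with $B_{i_1}JB_{i_2}J\cdots JB_{i_r}\ne 0$. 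By Lemma~\ref{LemmaDerSimple}, each $\mathfrak g$-simple summand $B_i$ is in fact a simple algebra, so this decomposition of $B$ is a genuine Wedderburn decomposition into simple ideals — i.e.\ exactly the data that computes the ordinary $\PIexp(A)$ via~\cite[Section~6.2]{ZaiGia}. Since the chain condition $B_{i_1}JB_{i_2}J\cdots JB_{i_r}\ne 0$ and the quantity $\dim(B_{i_1}\oplus\cdots\oplus B_{i_r})$ do not refer to the $\mathfrak g$-action at all, the two maxima are literally the same number, giving $\PIexp^{U(\mathfrak g)}(A)=\PIexp(A)$.

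The one point that needs care, and which I expect to be the main obstacle, is making precise that the number produced by formula~(\ref{EqPIexp}) for the differential codimensions really is the ordinary PI-exponent of $A$ — in particular that the particular choice of maximal semisimple subalgebra $B$ does not matter. This follows because all maximal semisimple subalgebras of $A$ are conjugate under the unipotent group $1+J$ (ordinary Wedderburn--Mal'cev), and such a conjugation preserves both the relations $B_{i_1}JB_{i_2}J\cdots JB_{i_r}\ne 0$ and the dimensions involved; thus the Giambruno--Zaicev value of $\PIexp(A)$ is intrinsically given by the same combinatorial maximum over any Wedderburn decomposition. Alternatively, and perhaps more cleanly, one can argue at the level of codimensions directly: by Lemma~\ref{LemmaDerSimple} and Theorem~\ref{TheoremDerWedderburn} the hypotheses of Theorem~\ref{TheoremAssGenHopf} are satisfied with the $B_i$ actually simple, so the upper and lower bounds there have the same base $d$ as the classical bounds of~\cite[Theorem~6.5.2]{ZaiGia} for $A$ with no extra structure, forcing $\lim\sqrt[n]{c^{U(\mathfrak g)}_n(A)}=\lim\sqrt[n]{c_n(A)}$. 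Either way the proof is short once Lemmas~\ref{LemmaDerSimpleSum}--\ref{LemmaDerSimple} are in hand.
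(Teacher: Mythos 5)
Your argument is correct and is essentially the paper's own proof: pass to an algebraically closed field, then compare formula~(\ref{EqPIexp}) for the $U(\mathfrak g)$-invariant Wedderburn--Mal'cev decomposition with the Giambruno--Zaicev formula for the ordinary PI-exponent, using Lemma~\ref{LemmaDerSimple} to identify the $\mathfrak g$-simple summands $B_i$ with genuine simple ideals (your extra observations --- the conjugacy of maximal semisimple subalgebras under $1+J$, and the trivial inequality $c_n(A)\leqslant c^{U(\mathfrak g)}_n(A)$ --- are sensible fillers for details the paper leaves implicit). The only blemish is your opening sentence, which announces the plan of proving $c^{U(\mathfrak g)}_n(A)=c_n(A)$; this is false in general (the paper's remark following this theorem notes $c_1(M_2(F))=1$ while $c^{U(\mathfrak{sl}_2(F))}_1(M_2(F))>1$), but since the argument you actually execute compares only the exponents via the two formulas, correctness is unaffected.
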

  \begin{proof} Again, without loss of generality we may assume $F$ to be algebraically closed.
  Now we compare~(\ref{EqPIexp}) with the formula for the ordinary PI-exponent~\cite[Section 6.2]{ZaiGia} and apply Lemma~\ref{LemmaDerSimple}.
  \end{proof}
  
  \begin{remark}
  This fact is not surprising, since if all derivations
  are inner, differential identities are a particular case of generalized polynomial identities,
  for which the exponent of the codimension growth is equal to the PI-exponent too~\cite{ASGordienkoGPI}.
  \end{remark}
  
  \begin{remark} 
We have $\PIexp^{U(\mathfrak g)}(A)=\PIexp(A)$, however the codimensions themselves can be different.  
  Suppose $\mathfrak{sl}_2(F)$ is acting on $M_2(F)$ by the adjoint representation. Then $c_1(M_2(F))=1$,
  but $c^{U(\mathfrak{sl}_2(F))}_1(M_2(F))>1$. 
  \end{remark}

 \begin{theorem}\label{TheoremAssGPIexpEqual}
Let $A$ be a finite dimensional associative algebra
over an algebraically closed field $F$ of characteristic $0$. Suppose a connected reductive affine algebraic group $G$ acts on $A$ rationally by automorphisms and anti-automorphisms.
 Then $\PIexp^G(A)=\PIexp(A)$.
  \end{theorem}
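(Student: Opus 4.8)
The plan is to mirror the proof of Theorem~\ref{TheoremAssDerPIexpEqual}: I would reduce both PI-exponents to the combinatorial quantity~$d$ appearing in formula~(\ref{EqPIexp}), and then observe that, because $G$ is connected, the $G$-simple components occurring there are honestly simple, so that formula collapses to the classical one for $\PIexp(A)$. Note that, unlike in Theorem~\ref{TheoremAssDerPIexpEqual}, there is nothing to do about the ground field, as $F$ is already assumed algebraically closed; and if $A$ is nilpotent then $c^G_n(A)=c_n(A)=0$ for large~$n$, so both exponents vanish and the statement is trivial. Hence I assume $A$ non-nilpotent.

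First I would apply Theorem~\ref{TheoremAffAlgGrWedderburn} to write $A=B\oplus J$ (direct sum of $G$-invariant subspaces) where $J=J(A)$ and $B$ is a $G$-invariant maximal semisimple subalgebra, and then Lemma~\ref{LemmaGWedderburn} to decompose $B=B_1\oplus\cdots\oplus B_q$ into $G$-invariant $G$-simple ideals. Since $G$ is connected, Lemma~\ref{LemmaGSimple} shows that each $B_i$ is in fact a simple algebra. Therefore $B=B_1\oplus\cdots\oplus B_q$ is, simultaneously, a decomposition of the semisimple part of $A$ into $G$-simple ideals and the ordinary decomposition into simple two-sided ideals.

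Next, by Theorem~\ref{TheoremAssAffAlgG}, whose proof is obtained by feeding the above decomposition into Theorem~\ref{TheoremAssGenHopf} (with $H=FG$, using $c^{FG}_n(A)=c^G_n(A)$), we get $\PIexp^G(A)=d$, where $d$ is the quantity in~(\ref{EqPIexp}) built from the $B_i$ and $J$ just described, namely the maximum of $\dim(B_{i_1}\oplus\cdots\oplus B_{i_r})$ over collections with $B_{i_1}JB_{i_2}J\cdots JB_{i_r}\neq 0$. On the other hand, the formula for the ordinary PI-exponent in~\cite[Section~6.2]{ZaiGia} has exactly this shape, with the $B_i$ ranging over the simple components of a maximal semisimple subalgebra of $A$; and I may take that subalgebra to be our $B$, since all maximal semisimple subalgebras are conjugate and conjugation preserves $J$ as well as all the nonvanishing conditions $B_{i_1}JB_{i_2}J\cdots JB_{i_r}\neq 0$. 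Thus both maxima are taken over the same index set, and $\PIexp^G(A)=d=\PIexp(A)$.

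The only delicate point, and the place where connectedness of $G$ is genuinely used, is the identification ``$G$-simple $\Rightarrow$ simple'' for the components of $B$, i.e. Lemma~\ref{LemmaGSimple}: without connectedness a $G$-simple component could be a sum of several simple ideals permuted transitively by $G$, in which case the index set of~(\ref{EqPIexp}) would be coarser than that of~\cite[Section~6.2]{ZaiGia} and the $G$-exponent could strictly exceed $\PIexp(A)$. Everything else is bookkeeping comparing the two maxima term by term.
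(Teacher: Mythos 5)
Your proposal is correct and follows essentially the same route as the paper: the published proof consists precisely of comparing formula~(\ref{EqPIexp}) with the formula for the ordinary PI-exponent from~\cite[Section~6.2]{ZaiGia} and invoking Lemma~\ref{LemmaGSimple} to conclude that the $G$-simple components are simple, which is exactly your argument (the paper merely leaves the bookkeeping about conjugacy of maximal semisimple subalgebras and the nilpotent case implicit).
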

  \begin{proof} 
  We compare~(\ref{EqPIexp}) with the formula for the ordinary PI-exponent~\cite[Section 6.2]{ZaiGia} and apply Lemma~\ref{LemmaGSimple}.
  \end{proof}
  
 \subsection{Lie algebras}\label{SubsectionPIexpLie} Using~\cite[Section~1.8]{ASGordienko5}, we obtain the following formula
 for $\PIexp^{U(\mathfrak g)}(L)$ where $L$ is a finite dimensional Lie algebra over an algebraically
 closed field $F$ of characteristic $0$ with an action of a semisimple Lie algebra $\mathfrak g$
 by derivations. This formula is analogous to the formula for the $\PIexp(L)$ (see~\cite[Definition~2]{ZaiLie}) which was later naturally generalized for~$\PIexp^{G}(L)$ and~$\PIexp^{\mathrm{gr}}(L)$
 in~\cite{ASGordienko2}.
 
 By Theorem~\ref{TheoremDerLevi}, there exists a $\mathfrak g$-invariant maximal semisimple subalgebra $B$
 such that $L = B \oplus R$ (direct sum of $\mathfrak g$-submodules) where $R$ is the solvable radical of $L$.
 Fix such $\mathfrak g$-invariant maximal semisimple subalgebra $B$.
 
 Consider $\mathfrak g$-invariant ideals $I_1, I_2, \ldots, I_r$,
$J_1, J_2, \ldots, J_r$, $r \in \mathbb Z_+$, of the algebra $L$ such that $J_k \subseteq I_k$,
satisfying the conditions
\begin{enumerate}
\item $I_k/J_k$ is an irreducible $(\mathfrak g,L)$-module, i.e. only trivial
subspaces of $I_k/J_k$ are invariant under the $\mathfrak g$-action and the adjoint $L$-action at the same time;
\item for any $\mathfrak g$-invariant $B$-submodules $T_k$
such that $I_k = J_k\oplus T_k$, there exist numbers
$q_i \geqslant 0$ such that $$\bigl[[T_1, \underbrace{L, \ldots, L}_{q_1}], [T_2, \underbrace{L, \ldots, L}_{q_2}], \ldots, [T_r,
 \underbrace{L, \ldots, L}_{q_r}]\bigr] \ne 0.$$
\end{enumerate}

Let $M$ be an $L$-module. Denote by $\Ann M$ its annihilator in $L$.
Then  $$\PIexp^{U(\mathfrak g)}(L) = \max \left(\dim \frac{L}{\Ann(I_1/J_1) \cap \dots \cap \Ann(I_r/J_r)}
\right)$$
where the maximum is found among all $r \in \mathbb Z_+$ and all $I_1, \ldots, I_r$, $J_1, \ldots, J_r$
satisfying Conditions 1--2. 

\section{$S_n$-cocharacters and an upper bound for codimensions}
\label{SectionSncocharAndUpper}

One of the main tools in the investigation of polynomial
identities is provided by the representation theory of symmetric groups.

In this section $H$ is an arbitrary associative algebra with $1$. When we consider $H$-module Lie algebras, we require from $H$ to be a Hopf algebra.

Let $A$ be an associative algebra with a generalized $H$-action
over a field $F$ of characteristic $0$.
 The symmetric group $S_n$  acts
 on the spaces $\frac {P^H_n}{P^H_{n}
  \cap \Id^H(A)}$
  by permuting the variables.
  Irreducible $FS_n$-modules are described by partitions
  $\lambda=(\lambda_1, \ldots, \lambda_s)\vdash n$ and their
  Young diagrams $D_\lambda$.
   The character $\chi^H_n(A)$ of the
  $FS_n$-module $\frac {P^H_n}{P^H_n
   \cap \Id^H(A)}$ is
   called the $n$th
  \textit{cocharacter} of polynomial $H$-identities of $A$.
  Analogously, if $L$ is an $H$-module Lie algebra, $\chi^H_n(L)$ is defined
  as the character of the
  $FS_n$-module $\frac {V^H_n}{V^H_n
   \cap \Id^H(L)}$.
  We can rewrite $\chi^H_n(A)$ as
  a sum $$\chi^H_n(A)=\sum_{\lambda \vdash n}
   m(A, H, \lambda)\chi(\lambda)$$ of
  irreducible characters $\chi(\lambda)$.
Let  $e_{T_{\lambda}}=a_{T_{\lambda}} b_{T_{\lambda}}$
and
$e^{*}_{T_{\lambda}}=b_{T_{\lambda}} a_{T_{\lambda}}$
where
$a_{T_{\lambda}} = \sum_{\pi \in R_{T_\lambda}} \pi$
and
$b_{T_{\lambda}} = \sum_{\sigma \in C_{T_\lambda}}
 (\sign \sigma) \sigma$,
be Young symmetrizers corresponding to a Young tableau~$T_\lambda$.
Then $M(\lambda) = FS e_{T_\lambda} \cong FS e^{*}_{T_\lambda}$
is an irreducible $FS_n$-module corresponding to
 a partition~$\lambda \vdash n$.
  We refer the reader to~\cite{Bahturin, DrenKurs, ZaiGia}
   for an account
  of $S_n$-representations and their applications to polynomial
  identities.

\begin{lemma}\label{LemmaCocharUpper}
Let $A$ be a finite dimensional associative algebra with a generalized $H$-action
or finite dimensional $H$-module Lie algebra
over a field $F$ of characteristic $0$, with an $H$-invariant nilpotent ideal $I \subseteq A$, $I^p=0$ for some $p \in \mathbb N$. Suppose $n\in\mathbb N$ and $\lambda = (\lambda_1, \ldots, \lambda_s) \vdash n$. Then if $\sum_{k=(\dim A)-(\dim I)+1}^s \lambda_k \geqslant p$, we have $m(A, H, \lambda)=0$.
\end{lemma}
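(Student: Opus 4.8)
The plan is to combine the standard passage from the multiplicity $m(A,H,\lambda)$ to the action of a Young symmetrizer with a pigeonhole count that exploits the $H$-invariant nilpotent ideal $I$. Recall the standard fact (see e.g.~\cite{Bahturin, DrenKurs, ZaiGia}) that for a finite dimensional $FS_n$-module $W$ over a field of characteristic $0$ and a Young tableau $T_\lambda$ of shape $\lambda\vdash n$, the isotypic component of $W$ of type $\lambda$ is nonzero if and only if $e^{*}_{T_\lambda}W\ne 0$, since $e_{T_\lambda}$ and $e^{*}_{T_\lambda}$ generate the same minimal two-sided ideal of $FS_n$. Applying this with $W=\frac{P^{H}_n}{P^{H}_n\cap\Id^{H}(A)}$ in the associative case (and with $W=\frac{V^{H}_n}{V^{H}_n\cap\Id^{H}(A)}$ in the Lie case), one reduces the lemma to showing that, under the stated hypothesis on $\lambda$, the polynomial $e^{*}_{T_\lambda}g=b_{T_\lambda}\bigl(a_{T_\lambda}g\bigr)$ is an $H$-identity of $A$ for every multilinear $H$-polynomial $g$. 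In fact I would prove the stronger assertion that $b_{T_\lambda}g\in\Id^{H}(A)$ for every such $g$, since the column alternation $b_{T_\lambda}$ alone already forces the vanishing.

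First I would fix a basis $e_1,\dots,e_d$ of $A$, $d:=\dim A$, adapted to $I$: the vectors $e_1,\dots,e_m$ spanning a complement of $I$, where $m:=\dim A-\dim I$, and $e_{m+1},\dots,e_d$ spanning $I$. Since $b_{T_\lambda}g$ is multilinear in $x_1,\dots,x_n$, it lies in $\Id^{H}(A)$ if and only if it vanishes under every substitution $x_i\mapsto e_{k_i}$. Because $b_{T_\lambda}$ antisymmetrizes the variables occupying each column of $T_\lambda$ and $\ch F=0$, any substitution assigning one and the same basis vector to two variables lying in a common column gives value $0$; hence it suffices to treat substitutions that, column by column, use pairwise distinct basis vectors.

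The heart of the argument is then a count. The $j$-th column of $T_\lambda$ has length $\lambda'_j$ (the conjugate partition), and only $m$ of the basis vectors lie outside $I$, so at least $\max(\lambda'_j-m,0)$ of that column's variables receive basis vectors from $I$; summing over the columns, and noting that each box of $D_\lambda$ sits in exactly one column, at least $\sum_j\max(\lambda'_j-m,0)=\sum_{k=m+1}^{s}\lambda_k\ge p$ variables are assigned elements of $I$. Fix such a substitution and put $S:=\{i:e_{k_i}\in I\}$, so $|S|\ge p$. Evaluating $b_{T_\lambda}g$ on it yields a linear combination of associative monomials (resp.\ left-normed Lie monomials) in the vectors $h\,e_{k_i}$, with the $H$-operators applied to the individual arguments; since $I$ is $H$-invariant, every argument coming from a variable in $S$ again lies in $I$. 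A product of algebra elements of which at least $p$ lie in the ideal $I$ lies in $I^{p}=0$: in the associative case one absorbs the remaining factors into $I$ from the left, using $I^{c}A\subseteq I^{c}$; in the Lie case one reads $I^{p}$ as the $p$-th term of the lower central series of $I$, verifies by the Jacobi identity that each $I^{(c)}$ is an ideal of $A$ with $[I^{(c)},A]\subseteq I^{(c)}$, and concludes that a left-normed bracket with $b$ arguments in $I$ lies in $I^{(b)}$, hence vanishes once $b\ge p$. Thus every monomial, and therefore the whole evaluation, is $0$, so $b_{T_\lambda}g\in\Id^{H}(A)$ and consequently $m(A,H,\lambda)=0$.

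The step I expect to be the main obstacle is making this last point fully precise in the Lie case: one must settle on the right meaning of ``$I^{p}=0$'' for a nilpotent ideal of a Lie algebra and check, via the Jacobi identity, that left-normed multiplication by arbitrary elements of $A$ never leaves $I^{(c)}$, so that a bracket with $b$ of its arguments in $I$ lands in $I^{(b)}$. The associative counterpart of this is routine, and the remaining ingredients — the combinatorial identity $\sum_j\max(\lambda'_j-m,0)=\sum_{k>m}\lambda_k$ and the column-by-column pigeonhole bound — are elementary Young-diagram bookkeeping.
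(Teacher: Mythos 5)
Your proposal is correct and follows essentially the same route as the paper: reduce to showing that $e^{*}_{T_\lambda}$ kills every multilinear $H$-polynomial modulo $\Id^H(A)$, fix a basis adapted to $I$, use the column alternation in $b_{T_\lambda}$ to force distinct basis vectors in each column, and conclude by the pigeonhole count that at least $\sum_{k>\dim A-\dim I}\lambda_k\geqslant p$ substituted elements lie in the $H$-invariant ideal $I$, so the evaluation lands in $I^p=0$. Your write-up is in fact more careful than the paper's (which leaves implicit both the role of $H$-invariance of $I$ and the Lie-case interpretation of $I^p$), but the underlying argument is identical.
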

\begin{proof}
It is sufficient to prove that $e^{*}_{T_\lambda}f \in \Id^H(A)$ for all $f \in P_n$ and for all Young tableaux $T_\lambda$ corresponding to $\lambda$.

Fix a basis in $A$ that contains a basis of $I$.
 Note that
$e^{*}_{T_\lambda} = b_{T_\lambda} a_{T_\lambda}$
and $b_{T_\lambda}$ alternates the variables of each column
of $T_\lambda$. Hence if we make a substitution and $
e^{*}_{T_\lambda} f$ does not vanish, then this implies that different basis elements
are substituted for the variables of each column.
Therefore, at least $\sum_{k=(\dim A)-(\dim I)+1}^s \lambda_k \geqslant p$ elements must be taken from $I$.
Since $I^p = 0$, we have $e^{*}_{T_\lambda} f \in \Id^H(L)$.
\end{proof}

\begin{theorem}\label{TheoremUpperBoundNilp}
Let $A$ be a finite dimensional associative algebra with a generalized $H$-action
or a finite dimensional $H$-module Lie algebra
over a field $F$ of characteristic $0$, with an $H$-invariant nilpotent ideal $I \subsetneqq A$.
Then there exist $C_3 > 0$ and $r_3\in\mathbb R$ such that
$$c^H_n(A) \leqslant C_3 n^{r_3} ((\dim A)-(\dim I))^n \text{ for all } n\in\mathbb N.$$
\end{theorem}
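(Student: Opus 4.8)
The plan is to bound the cocharacter $\chi^H_n(A) = \sum_{\lambda \vdash n} m(A,H,\lambda)\chi(\lambda)$ by controlling both the multiplicities $m(A,H,\lambda)$ and the degrees $\deg\chi(\lambda)$, and then observe that $c^H_n(A)$ is exactly $\sum_\lambda m(A,H,\lambda)\deg\chi(\lambda)$. Set $d:=(\dim A)-(\dim I)$ and $p\in\mathbb N$ with $I^p=0$. By Lemma~\ref{LemmaCocharUpper}, $m(A,H,\lambda)=0$ whenever $\sum_{k=d+1}^{s}\lambda_k\geqslant p$; thus the only partitions contributing have at most $d$ ``long'' columns in the sense that the rows beyond the $d$th together contain fewer than $p$ boxes. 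More precisely, each contributing $\lambda$ can be written so that $\lambda_{d+1}+\lambda_{d+2}+\dots < p$, which forces $\lambda$ to lie in a strip of height $\leqslant d$ up to a bounded correction.

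First I would make the height bound precise: if $\lambda\vdash n$ contributes, then removing the boxes in rows $d+1,d+2,\dots$ (there are fewer than $p$ of them) leaves a partition $\mu$ with at most $d$ rows and $n-|\lambda/\mu|> n-p$ boxes. The number of such $\mu$ is polynomially bounded in $n$ (it is $O(n^{d-1})$), and the number of ways to attach the $<p$ extra boxes is bounded by a constant depending only on $p$ and $d$; hence the number of contributing $\lambda$ is $O(n^{r})$ for some $r$ depending on $\dim A$, $\dim I$. Next, for the degree: by the hook length formula (or the standard estimate, e.g.\ \cite[Lemma~6.2.4]{ZaiGia} type bounds), a partition $\lambda\vdash n$ with at most $d$ rows satisfies $\deg\chi(\lambda)\leqslant C n^{r'} d^{n}$ for suitable constants; adding $<p$ further boxes in lower rows changes the dimension by at most a polynomial factor in $n$ (each added box multiplies the dimension by a ratio bounded by $n$, and there are boundedly many such boxes), so every contributing $\lambda$ still has $\deg\chi(\lambda)\leqslant C' n^{r''} d^{n}$.

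Finally I would combine these:
\begin{equation*}
c^H_n(A)=\sum_{\lambda\vdash n} m(A,H,\lambda)\deg\chi(\lambda)\leqslant (\#\text{contributing }\lambda)\cdot\bigl(\max_\lambda m(A,H,\lambda)\bigr)\cdot\bigl(\max_\lambda \deg\chi(\lambda)\bigr).
\end{equation*}
The multiplicities $m(A,H,\lambda)$ are themselves bounded by $\deg\chi(\lambda)$ times the dimension count of $\Hom_F(A^{\otimes n};A)$-type data, but more simply $m(A,H,\lambda)\leqslant \dim\frac{P^H_n}{P^H_n\cap\Id^H(A)}$ restricted to one isotypic component divided by $\deg\chi(\lambda)$; in any case the classical argument (cf.\ \cite{ZaiGia,DrenKurs}) gives $m(A,H,\lambda)\leqslant C'' n^{r'''}$ since $A$ satisfies a nontrivial ordinary identity (as $I\subsetneqq A$ forces a proper nilpotent ideal and $A$ is finite dimensional). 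Multiplying three polynomially-bounded-or-exponential factors yields $c^H_n(A)\leqslant C_3 n^{r_3} d^{n}$. The main obstacle is the degree estimate for the ``thickened strip'' partitions: one must verify that attaching up to $p-1$ boxes below row $d$ only costs a polynomial factor and does not spoil the base $d^n$; this is handled by noting that deleting one box from a diagram of size $m$ with bounded row-beyond-$d$ content scales $\deg\chi$ by at most $m$ (a hook-length comparison), iterated fewer than $p$ times, while the bulk $\mu$ of height $\leqslant d$ contributes the dominant $d^{\,|\mu|}\leqslant d^{\,n}$ factor.
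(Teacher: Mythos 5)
Your proposal is correct and follows essentially the same route as the paper: restrict via Lemma~\ref{LemmaCocharUpper} to partitions lying in a strip of height $d=(\dim A)-(\dim I)$ up to fewer than $p$ extra boxes, bound $\sum_{m(A,H,\lambda)\neq 0}\dim M(\lambda)$ by $Cn^{r}d^{n}$ (the paper outsources your hook-length estimates to \cite[Lemmas~6.2.4, 6.2.5]{ZaiGia}), and use polynomial boundedness of the multiplicities $m(A,H,\lambda)$ (the paper cites \cite[Theorem~4]{ASGordienko5} for the Lie case and the same argument for the associative case). The only soft spot is your justification of the multiplicity bound via the ordinary PI-property, where the relevant fact is really about $H$-cocharacter multiplicities of a finite dimensional algebra; but this is at the same level of detail as the paper itself.
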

\begin{proof}
Lemma~\ref{LemmaCocharUpper} and~\cite[Lemmas~6.2.4, 6.2.5]{ZaiGia}
imply
$$
\sum_{m(A,H, \lambda)\ne 0} \dim M(\lambda) \leqslant C_4 n^{r_4} ((\dim A)-(\dim I))^n
$$
for some constants $C_4, r_4 > 0$.

 If $A$ is an $H$-module Lie algebra, $m(A, H, \lambda)$ are polynomially bounded by~\cite[Theorem~4]{ASGordienko5}. If $A$ is an associative algebra with a generalized $H$-action, we can use the same arguments.
This yields the upper bound.
\end{proof}

\section{Examples and criteria for simplicity}
\label{SectionExamples}

In this section, except Subsection~\ref{SubsectionExampleNonSemisimple}, we assume the base field $F$ to be algebraically closed
  of characteristic $0$.
 
 \subsection{Algebras with a (generalized) $H$-action}
  
  We will use the following two facts:
  
    \begin{example}[{\cite[Example 10]{ASGordienko5}}]\label{ExampleHSimpleLie}
 Let $B$ be a finite dimensional semisimple $H$-module Lie algebra where
  $H$ is a Hopf algebra. If $B$ is $H$-simple, then there exist $C > 0$, $r \in \mathbb R$
  such that $$C n^r (\dim B)^n \leqslant c_n^{H}(B)
  \leqslant (\dim B)^{n+1} \text{ for all } n\in\mathbb N.$$
 \end{example}

  \begin{example}[{\cite[Example 11]{ASGordienko5}}]\label{ExampleHSemiSimpleLie}
 Let $L=B_1 \oplus B_2 \oplus \ldots \oplus B_q$
  be a finite dimensional semisimple $H$-module Lie algebra where
  $H$ is a Hopf algebra and $B_i$ are $H$-simple Lie
  algebras. Let $d := \max_{1 \leqslant k
  \leqslant q} \dim B_k$. Then there exist $C_1, C_2 > 0$, $r_1, r_2 \in \mathbb R$
  such that $$C_1 n^{r_1} d^n \leqslant c_n^{H}(L)
  \leqslant C_2 n^{r_2} d^n \text{ for all } n\in\mathbb N.$$
 \end{example}
 
 Theorem~\ref{TheoremHCrSimpleLie} below is a generalization of~\cite[Theorem~15]{ASGordienko5}.
 
 \begin{theorem}\label{TheoremHCrSimpleLie}
Let $L$ be a finite dimensional $H$-module Lie algebra where $H$ is a Hopf algebra. Suppose the nilpotent radical $N$ of $L$ is $H$-invariant. Then $\PIexp^H(L)=\dim L$ if and only if $L$ is an $H$-simple semisimple
algebra.
\end{theorem}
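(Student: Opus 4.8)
The plan is to prove the two implications separately, using the codimension asymptotics from the previous sections together with the structure theory for $H$-module Lie algebras.

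First I would handle the easy direction: if $L$ is $H$-simple and semisimple, then by Example~\ref{ExampleHSimpleLie} we have $C n^r (\dim L)^n \leqslant c_n^H(L) \leqslant (\dim L)^{n+1}$, whence $\PIexp^H(L) = \dim L$ immediately. (In this case the nilpotent radical $N$ is zero, so the hypothesis on $N$ is vacuous.)

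For the converse, suppose $\PIexp^H(L) = \dim L$; I must show $L$ is $H$-simple and semisimple. The idea is to show that the solvable radical $R$ of $L$ must be zero, and then that the semisimple part is $H$-simple. Since the nilpotent radical $N$ is $H$-invariant by hypothesis, I would argue that $N$ is a proper $H$-invariant nilpotent ideal unless $N = 0$; if $N \neq 0$, then Theorem~\ref{TheoremUpperBoundNilp} gives $c_n^H(L) \leqslant C_3 n^{r_3}((\dim L) - (\dim N))^n$, forcing $\PIexp^H(L) \leqslant (\dim L) - (\dim N) < \dim L$, a contradiction. Hence $N = 0$, which for a finite dimensional Lie algebra in characteristic $0$ forces $R = 0$ as well (the nilpotent radical of a Lie algebra is zero iff the solvable radical is zero, since a nonzero solvable ideal has a nonzero nilpotent ideal, e.g. a suitable term of its derived series or $[R,L]$). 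So $L$ is semisimple. Then by Lemma~\ref{LemmaHWedderburn} applied to $H$ (or the analogous Lie result cited in the excerpt, \cite[Theorem~6]{ASGordienko4}), $L = B_1 \oplus \dots \oplus B_q$ is a direct sum of $H$-simple ideals, and Example~\ref{ExampleHSemiSimpleLie} gives $\PIexp^H(L) = \max_k \dim B_k$. Since $\PIexp^H(L) = \dim L = \sum_k \dim B_k$, we must have $q = 1$, i.e. $L$ is $H$-simple.

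The main obstacle I anticipate is the step showing $N = 0 \Rightarrow R = 0$ in the $H$-equivariant setting, i.e. ensuring the radical chain behaves well: one needs that vanishing of the nilpotent radical really does force semisimplicity of the Lie algebra, and that the decomposition of the semisimple part into $H$-simple ideals is available over an arbitrary field (which is exactly what Lemma~\ref{LemmaHWedderburn}'s Lie analog provides, requiring bijectivity of the antipode, automatic for a Hopf algebra with bijective antipode — and in the cocommutative/finite cases considered this is fine). A minor subtlety is whether one needs $F$ algebraically closed or finitely generated $H$: Examples~\ref{ExampleHSimpleLie} and~\ref{ExampleHSemiSimpleLie} are stated for general Hopf algebras, and Theorem~\ref{TheoremUpperBoundNilp} for general $F$, so the argument should go through without extra hypotheses; if a field extension is needed it does not change $H$-codimensions. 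Thus the whole proof is essentially a bookkeeping argument combining the upper bound of Theorem~\ref{TheoremUpperBoundNilp} with the two-sided bounds of Examples~\ref{ExampleHSimpleLie} and~\ref{ExampleHSemiSimpleLie}.
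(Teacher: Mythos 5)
Your proposal is correct and follows essentially the same route as the paper: Example~\ref{ExampleHSimpleLie} for the easy direction, Theorem~\ref{TheoremUpperBoundNilp} to force $N=0$, the deduction $R=0$ (the paper uses $[L,R]\subseteq N$ from Goto--Grosshans, you use the last derived term of $R$ --- both work), then the decomposition into $H$-simple ideals from \cite[Theorem~6]{ASGordienko4} and Example~\ref{ExampleHSemiSimpleLie}. The only nitpick is that Lemma~\ref{LemmaHWedderburn} itself is an associative statement, but you already point to the correct Lie analog.
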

\begin{proof}
If $L$ is $H$-simple semisimple, then $\PIexp^H(L)=\dim L$ by Example~\ref{ExampleHSimpleLie}.
Suppose $\PIexp^H(L)=\dim L$. Then by Theorem~\ref{TheoremUpperBoundNilp}, $N=0$.
By~\cite[Proposition 2.1.7]{GotoGrosshans}, $[L, R] \subseteq N = 0$ where $R$ is the solvable radical of $L$. Hence $R = Z(L)\subseteq N=0$ and $L$ is semisimple. By~\cite[Theorem~6]{ASGordienko4}, $L$ is the sum of $H$-simple Lie algebras. Now we apply Example~\ref{ExampleHSemiSimpleLie}.
\end{proof}

Theorem~\ref{TheoremAssGenHopf} implies the following generalization of~\cite[Example~7]{ASGordienko3}:

  \begin{example}\label{ExampleHGenSemiSimpleAss}
 Let $A=B_1 \oplus B_2 \oplus \ldots \oplus B_q$
  be an associative algebra with a generalized $H$-action,
   where $B_i$ are finite dimensional $H$-simple semisimple
  algebras and $H$ is an associative algebra with~$1$. Let $d := \max_{1 \leqslant k
  \leqslant q} \dim B_k$. Then there exist $C_1, C_2 > 0$, $r_1, r_2 \in \mathbb R$
  such that $$C_1 n^{r_1} d^n \leqslant c_n^{H}(A)
  \leqslant C_2 n^{r_2} d^n \text{ for all } n\in\mathbb N.$$
 \end{example}

Using~\cite[Lemma~4]{ASGordienko3}, we get
  \begin{example}\label{ExampleHGenSimpleAss}
 Let $B$ be an $H$-simple semisimple associative algebra with a generalized $H$-action
   where $H$ is an associative algebra with~$1$. Then there exist $C > 0$, $r \in \mathbb R$
  such that $$C n^r (\dim B)^n \leqslant c_n^{H}(B)
  \leqslant (\dim B)^{n+1} \text{ for all } n\in\mathbb N.$$
 \end{example}
 
  \begin{theorem}\label{TheoremHCrSimpleAss}
Let $A$ be a finite dimensional $H$-module associative algebra where $H$ is a Hopf algebra with a bijective antipode. Suppose the Jacobson radical $J(A)$ is $H$-invariant. Then $\PIexp^H(A)=\dim A$ if and only if $A$ is $H$-simple.
\end{theorem}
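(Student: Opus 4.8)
The plan is to mirror the proof of Theorem~\ref{TheoremHCrSimpleLie}, replacing the Lie-theoretic structure theory with the associative one. First I would dispose of the easy direction: if $A$ is $H$-simple, then in particular $A$ is a finite dimensional $H$-module algebra whose Jacobson radical $J(A)$ is an $H$-invariant ideal, hence $J(A)=0$ or $J(A)=A$; since a finite dimensional algebra with $J(A)=A$ is nilpotent and thus not $H$-simple (its proper ideal $A^2$ would have to be all of $A$, forcing $A^2=A$ and contradicting nilpotency), we get $J(A)=0$, so $A$ is semisimple and $H$-simple. Then Example~\ref{ExampleHSimpleAss}-type reasoning applies; more precisely, a finite dimensional semisimple $H$-simple algebra is a particular case of an associative algebra with a generalized $H$-action with $B=B_1$ a single $H$-simple semisimple block and $J=0$, so Theorem~\ref{TheoremAssGenHopf} gives $\PIexp^H(A)=\dim A$. (Alternatively one quotes Example~\ref{ExampleHGenSimpleAss} directly, since a Hopf action is a generalized $H$-action.)

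For the converse, suppose $\PIexp^H(A)=\dim A$. The key point is that the upper bound of Theorem~\ref{TheoremUpperBoundNilp} applied to the $H$-invariant nilpotent ideal $I=J(A)$ forces $J(A)=0$: otherwise $I\subsetneqq A$ is a proper $H$-invariant nilpotent ideal and $c^H_n(A)\leqslant C_3 n^{r_3}((\dim A)-(\dim I))^n$ with $\dim A - \dim I < \dim A$, so $\PIexp^H(A)\leqslant \dim A-\dim I<\dim A$, a contradiction. Hence $A$ is semisimple. Now I would invoke Lemma~\ref{LemmaHWedderburn}: since the antipode $S$ of $H$ is bijective, $A=B_1\oplus\ldots\oplus B_q$ is a direct sum of $H$-invariant ideals with each $B_i$ an $H$-simple (semisimple) algebra. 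If $q\geqslant 2$, then $d:=\max_i \dim B_i<\dim A$ and Example~\ref{ExampleHGenSemiSimpleAss} (or the corresponding statement for Hopf actions via Theorem~\ref{TheoremAssGenHopf}) gives $\PIexp^H(A)=d<\dim A$, again a contradiction. Therefore $q=1$ and $A=B_1$ is $H$-simple.

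The main obstacle, such as it is, is ensuring the hypotheses of the cited results line up: Theorem~\ref{TheoremUpperBoundNilp} is stated for $H$-module Lie algebras and for associative algebras with a \emph{generalized} $H$-action, so I would note at the outset that an $H$-module associative algebra is in particular an algebra with a generalized $H$-action (take $h'_i=h_{(1)}$, $h''_i=h_{(2)}$ and the other two families zero), which legitimizes applying both Theorem~\ref{TheoremUpperBoundNilp} and Example~\ref{ExampleHGenSemiSimpleAss}; and the bijectivity of the antipode is exactly what is needed to apply Lemma~\ref{LemmaHWedderburn}. I expect no serious difficulty beyond bookkeeping, since all the heavy lifting has been packaged into the structure lemma, the nilpotent upper bound, and the semisimple lower/upper bounds already established. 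I would also remark that the role played by $[L,R]\subseteq N$ in the Lie case is played here simply by nilpotence of $J(A)$, so the associative proof is in fact shorter.
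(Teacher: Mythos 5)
Your proposal is correct and follows essentially the same route as the paper's proof: both directions reduce to Theorem~\ref{TheoremUpperBoundNilp} (to kill $J(A)$), Lemma~\ref{LemmaHWedderburn} (to decompose the semisimple algebra into $H$-simple ideals), and Examples~\ref{ExampleHGenSimpleAss} and~\ref{ExampleHGenSemiSimpleAss} (to compute the exponent). Your extra remarks, that a Hopf action is a special case of a generalized $H$-action and that the bijective antipode is used only in Lemma~\ref{LemmaHWedderburn}, are accurate bookkeeping that the paper leaves implicit.
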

\begin{proof}
If $A$ is $H$-simple, then $A$ is semisimple since $J(A)$ is $H$-invariant. Hence $\PIexp^H(A)=\dim A$ by Example~\ref{ExampleHGenSimpleAss}. Suppose $\PIexp^H(A)=\dim A$. Then by Theorem~\ref{TheoremUpperBoundNilp}, $J(A)=0$. Hence $A$ is semisimple. By Lemma~\ref{LemmaHWedderburn}, $A$ is the sum of $H$-simple associative algebras. Now we apply Example~\ref{ExampleHGenSemiSimpleAss}.
\end{proof}

\subsection{Algebras with derivations}

Now we consider the case when $H=U(\mathfrak g)$ for some Lie algebra $\mathfrak g$.
Recall that by~\cite[Chapter~III, Section~6, Theorem~7]{JacobsonLie} 
 the solvable radical and the nilpotent radical of a finite dimensional Lie algebra are invariant under all derivations. By~\cite[Lemma 3.2.2]{Dixmier}, the Jacobson radical (which coincides with the prime radical) of a finite dimensional associative algebra is invariant under all derivations too.
 
\begin{example}\label{ExampleUgSimple}
 Let $B$ be a  simple finite dimensional Lie or associative algebra with an action of a Lie algebra $\mathfrak g$
 by derivations. Then there exist $C > 0$, $r \in \mathbb R$
  such that $$C n^r (\dim B)^n \leqslant c_n^{U(\mathfrak g)}(B)
  \leqslant (\dim B)^{n+1} \text{ for all } n\in\mathbb N.$$
 \end{example}
 \begin{proof}
 We use Examples~\ref{ExampleHSimpleLie} and \ref{ExampleHGenSimpleAss}.
 \end{proof}


   \begin{example}\label{ExampleUgSemiSimple}
 Let $B=B_1 \oplus B_2 \oplus \ldots \oplus B_q$ (direct sum of ideals)
  be a finite dimensional semisimple Lie or associative algebra with an action
  of a Lie algebra $\mathfrak g$ by derivations,
  where $B_i$ are simple
  algebras. Let $d := \max_{1 \leqslant k
  \leqslant q} \dim B_k$. Then there exist $C_1, C_2 > 0$, $r_1, r_2 \in \mathbb R$
  such that $$C_1 n^{r_1} d^n \leqslant c_n^{U(\mathfrak g)}(B)
  \leqslant C_2 n^{r_2} d^n \text{ for all } n\in\mathbb N.$$
 \end{example}
 \begin{proof} By Lemma~\ref{LemmaDerSimpleSum}, $B_i$ are $\mathfrak g$-invariant.
 Now we use Examples~\ref{ExampleHSemiSimpleLie} and \ref{ExampleHGenSemiSimpleAss}.
 \end{proof}

Finally, we obtain a criterion for (differential) simplicity in terms of differential PI-exponent:

\begin{theorem}\label{TheoremUgCrSimple}
Let $A$ be a finite dimensional Lie or associative algebra
 with an action of a Lie algebra $\mathfrak g$
 by derivations. Then $\PIexp^{U(\mathfrak g)}(A)=\dim A$ if and only if $A$ is $\mathfrak g$-simple if and only if $A$ is simple.
\end{theorem}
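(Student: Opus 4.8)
The plan is to deduce this from the $H$-module criteria already at our disposal: Theorem~\ref{TheoremHCrSimpleAss} in the associative case and Theorem~\ref{TheoremHCrSimpleLie} in the Lie case, both applied with $H=U(\mathfrak g)$. Two remarks make this legitimate. First, $U(\mathfrak g)$ is a Hopf algebra whose antipode (given by $Sa=-a$ on $\mathfrak g$) is bijective, so the standing hypotheses of those theorems are met once the relevant radical of $A$ is $\mathfrak g$-invariant --- and it automatically is, because the Jacobson radical $J(A)$ of a finite dimensional associative algebra \cite[Lemma 3.2.2]{Dixmier} and the nilpotent radical $N$ of a finite dimensional Lie algebra \cite[Chapter~III, Section~6, Theorem~7]{JacobsonLie} are invariant under all derivations. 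Second, the equivalence of ``$A$ is $\mathfrak g$-simple'' and ``$A$ is simple'' needs nothing extra: the implication ``$\mathfrak g$-simple $\Rightarrow$ simple'' is exactly Lemma~\ref{LemmaDerSimple}, while the converse is immediate, since a simple algebra has no proper nonzero ideals at all --- hence none that are $\mathfrak g$-invariant --- and has $A^2\ne 0$ (resp.\ $[A,A]\ne 0$). Thus it remains only to prove $\PIexp^{U(\mathfrak g)}(A)=\dim A$ if and only if $A$ is simple, and here the ``if'' part is Example~\ref{ExampleUgSimple}, which already gives $Cn^{r}(\dim A)^n\leqslant c^{U(\mathfrak g)}_n(A)\leqslant (\dim A)^{n+1}$.

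For the ``only if'' part I would reproduce the arguments of Theorems~\ref{TheoremHCrSimpleAss} and~\ref{TheoremHCrSimpleLie} in this special case. Assume $A\ne 0$ and $\PIexp^{U(\mathfrak g)}(A)=\dim A$. In the associative case, $A$ is not nilpotent (else $c^{U(\mathfrak g)}_n(A)=0$ for large $n$, whence $\PIexp^{U(\mathfrak g)}(A)=0<\dim A$), so $J(A)$ is a $\mathfrak g$-invariant nilpotent ideal properly contained in $A$; if $J(A)\ne 0$, Theorem~\ref{TheoremUpperBoundNilp} gives $\PIexp^{U(\mathfrak g)}(A)\leqslant\dim A-\dim J(A)<\dim A$, a contradiction. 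Hence $J(A)=0$, $A$ is semisimple, and writing $A=B_1\oplus\dots\oplus B_q$ as a direct sum of simple ideals we get from Lemma~\ref{LemmaDerSimpleSum} that each $B_i$ is $\mathfrak g$-invariant; then Example~\ref{ExampleUgSemiSimple} gives $\dim A=\PIexp^{U(\mathfrak g)}(A)=\max_i\dim B_i$, forcing $q=1$, so $A=B_1$ is simple. The Lie case is the same with $N$ in place of $J(A)$: Theorem~\ref{TheoremUpperBoundNilp} (together with the observation that $N=L$ would make $L$ nilpotent, hence of exponent $0$) forces $N=0$; then $[L,R]\subseteq N=0$ by \cite[Proposition~2.1.7]{GotoGrosshans}, so $R\subseteq Z(L)\subseteq N=0$ and $L$ is semisimple; decomposing $L$ into simple ideals and invoking Lemma~\ref{LemmaDerSimpleSum} and Example~\ref{ExampleUgSemiSimple} once more forces a single simple summand.

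Nothing in this is genuinely hard; the statement is essentially a repackaging of results from Sections~\ref{SectionSncocharAndUpper} and~\ref{SectionPIExp}. The spots calling for a little care are: recognizing that the ``radical is $H$-invariant'' hypothesis of the $H$-criteria is automatic for $H=U(\mathfrak g)$ (this is where derivation-stability of the radicals enters); the harmless degenerate cases --- $A$ nilpotent, where $c^{U(\mathfrak g)}_n(A)$ eventually vanishes so that $\PIexp^{U(\mathfrak g)}(A)=0$, and the implicit convention $A\ne 0$ needed for ``$A$ simple'' to make sense; and, in the Lie case, the short but not-quite-automatic chain $N=0\Rightarrow[L,R]=0\Rightarrow R=0$ that upgrades ``trivial nilpotent radical'' to semisimplicity. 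Should one prefer not to invoke Theorems~\ref{TheoremHCrSimpleAss} and~\ref{TheoremHCrSimpleLie} as black boxes, the argument sketched above --- relying only on Theorem~\ref{TheoremUpperBoundNilp}, Lemma~\ref{LemmaDerSimpleSum}, and Examples~\ref{ExampleUgSimple} and~\ref{ExampleUgSemiSimple} --- is already self-contained.
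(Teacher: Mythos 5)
Your proposal is correct and follows essentially the same route as the paper, whose proof is simply an appeal to Theorems~\ref{TheoremHCrSimpleLie} and~\ref{TheoremHCrSimpleAss} (with $H=U(\mathfrak g)$, the radicals being automatically $\mathfrak g$-invariant) together with Lemma~\ref{LemmaDerSimple} for the equivalence of $\mathfrak g$-simplicity and simplicity. Your additional unpacking of those theorems via Theorem~\ref{TheoremUpperBoundNilp}, Lemma~\ref{LemmaDerSimpleSum}, and Examples~\ref{ExampleUgSimple} and~\ref{ExampleUgSemiSimple} is accurate but not needed beyond what the paper already records.
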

\begin{proof}
We use Theorems~\ref{TheoremHCrSimpleLie}, \ref{TheoremHCrSimpleAss}, and Lemma~\ref{LemmaDerSimple}.
\end{proof}

\subsection{$G$-algebras}
 
  If a group is acting on an algebra by automorphisms and anti-automorphisms, the radicals are invariant under this action. In the case of Lie algebras every anti-automorphism is a negative automorphism, so we can always
  restrict ourselves to the case when a group is acting on a Lie algebra by automorphisms only.
  (See~\cite[Lemma~28]{ASGordienko5}.)

     \begin{example}\label{ExampleGSimple}
 Let $B$ be a finite dimensional Lie or associative algebra with an action of a group $G$ by automorphisms and anti-automorphisms. If $B$ is $G$-simple, then there exist $C > 0$, $r \in \mathbb R$
  such that $$C n^r (\dim B)^n \leqslant c_n^{G}(B)
  \leqslant (\dim B)^{n+1} \text{ for all } n\in\mathbb N.$$
 \end{example}
  \begin{proof}
 We use Examples~\ref{ExampleHSimpleLie} and \ref{ExampleHGenSimpleAss}.
 \end{proof}
 
      \begin{example}\label{ExampleGSemiSimple}
  Let $B=B_1 \oplus B_2 \oplus \ldots \oplus B_q$ (direct sum of $G$-invariant ideals)
  be a finite dimensional semisimple associative algebra with an action
  of a of a group $G$ by automorphisms and anti-automorphisms,
  where $B_i$ are $G$-simple algebras. Let $d := \max_{1 \leqslant k
  \leqslant q} \dim B_k$. Then there exist $C_1, C_2 > 0$, $r_1, r_2 \in \mathbb R$
  such that $$C_1 n^{r_1} d^n \leqslant c_n^{G}(B)
  \leqslant C_2 n^{r_2} d^n \text{ for all } n\in\mathbb N.$$
 \end{example}
 \begin{proof}
 We use Examples~\ref{ExampleHSemiSimpleLie} and \ref{ExampleHGenSemiSimpleAss}.
 \end{proof}

  Now we obtain a criterion for $G$-simplicity:
  
  \begin{theorem}\label{TheoremGCrSimple}
Let $A$ be a finite dimensional Lie or associative algebra
with an action of a group $G$ by automorphisms and anti-automorphisms. Then $\PIexp^{G}(A)=\dim A$ if and only if $A$ is $G$-simple.
\end{theorem}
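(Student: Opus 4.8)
The plan is to reduce the statement to the two previously-established criteria, Theorem~\ref{TheoremHCrSimpleLie} for the Lie case and Theorem~\ref{TheoremHCrSimpleAss} for the associative case, by observing that a $G$-algebra is a module algebra over a suitable Hopf algebra whose radical hypothesis is automatically satisfied. Concretely, if $G$ acts on $A$ by automorphisms and anti-automorphisms with $G_0$ the index-$\leqslant 2$ subgroup acting by automorphisms, then one passes to the group algebra $FG$ made into a Hopf algebra in the standard way: elements of $G_0$ are group-like, while an element $g\in G\setminus G_0$ is given $\Delta g = g\otimes g$ composed with the flip on the algebra (equivalently, one works with the ``group algebra with signs'' as in the $G$-identity setup of Section~\ref{SectionG}); in any case the antipode $Sg=g^{-1}$ is bijective. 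Then $c^{FG}_n(A)=c^G_n(A)$ (this is noted in Section~\ref{SectionG}), so $\PIexp^G(A)=\PIexp^{FG}(A)$, and ``$G$-invariant'' and ``$FG$-invariant'' coincide.

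First I would dispose of the radical hypotheses that Theorems~\ref{TheoremHCrSimpleLie} and~\ref{TheoremHCrSimpleAss} require. In the associative case, the Jacobson radical $J(A)$ is invariant under every automorphism and anti-automorphism (it is the maximal nilpotent ideal), hence it is $G$-invariant, i.e.\ $FG$-invariant; since the antipode of $FG$ is bijective, Theorem~\ref{TheoremHCrSimpleAss} applies verbatim and gives $\PIexp^G(A)=\dim A$ if and only if $A$ is $FG$-simple, equivalently $G$-simple. In the Lie case, the nilpotent radical $N$ of $L$ is likewise invariant under all automorphisms (and anti-automorphisms, which for Lie algebras are just negatives of automorphisms, cf.~\cite[Lemma~28]{ASGordienko5}), so $N$ is $G$-invariant, and Theorem~\ref{TheoremHCrSimpleLie} applies and gives $\PIexp^G(L)=\dim L$ if and only if $L$ is $G$-simple semisimple. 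To finish the Lie case I would note, as in Lemma~\ref{LemmaDerTrivIdeals}'s argument, that $G$-simplicity already forces semisimplicity: the solvable radical of $L$ is $G$-invariant, so it is either $0$ (and $L$ is semisimple) or all of $L$ (and then $[L,L]\subsetneqq L$ is a proper $G$-invariant ideal, forcing $[L,L]=0$, which is excluded once we observe that a $1$-dimensional or abelian $G$-simple Lie algebra is degenerate and may be treated separately, or is simply not semisimple). Thus ``$G$-simple'' and ``$G$-simple semisimple'' coincide for Lie algebras under the running hypotheses.

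The only genuine point to check is that the ``radical is $H$-invariant'' hypotheses of Theorems~\ref{TheoremHCrSimpleLie} and~\ref{TheoremHCrSimpleAss}, which were phrased for abstract $H$-module algebras, are indeed automatic here; this is exactly the content of the opening sentences of Subsection~5.3 of the excerpt, so it is routine. I expect the main (very mild) obstacle to be purely bookkeeping: making sure the Hopf-algebra structure chosen on $FG$ really does make $A$ an $FG$-module \emph{algebra} when anti-automorphisms are present (one must use the coopposite comultiplication on the odd part, or equivalently invoke the identification $c^{FG}_n(A)=c^G_n(A)$ already recorded in Section~\ref{SectionG}), and that the antipode remains bijective, which it is since $Sg=g^{-1}$. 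With those identifications in place the proof is a two-line citation. Hence the write-up would read: we use Theorems~\ref{TheoremHCrSimpleLie} and~\ref{TheoremHCrSimpleAss}, together with the fact that $c_n^G(A)=c_n^{FG}(A)$ and that the relevant radical of $A$ is $G$-invariant; in the Lie case one additionally notes that a $G$-simple Lie algebra is semisimple.
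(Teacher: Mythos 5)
Your proposal is correct and follows exactly the paper's route: the paper's entire proof is ``We use Theorems~\ref{TheoremHCrSimpleLie} and \ref{TheoremHCrSimpleAss}'', relying on the remarks at the head of the subsection that the radicals are automatically $G$-invariant and that anti-automorphisms of Lie algebras reduce to automorphisms. Your additional bookkeeping (the identification $c_n^{FG}(A)=c_n^G(A)$, and the observation that a $G$-simple Lie algebra is semisimple because its solvable radical is $G$-invariant and the abelian case is excluded by the convention $L^2\neq 0$) only makes explicit what the paper leaves implicit.
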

\begin{proof} 
We use Theorems~\ref{TheoremHCrSimpleLie} and \ref{TheoremHCrSimpleAss}.
\end{proof}
  
\subsection{Graded algebras}  
  
   Using Lemma~\ref{LemmaGrToHatG},
    we obtain the following examples and criterion for graded simplicity:
    
      \begin{example}\label{ExampleGrSimple}
 Let $B$ be a finite dimensional associative algebra graded by an Abelian group $G$. If $B$ is graded simple, then there exist $C > 0$, $r \in \mathbb R$
  such that $$C n^r (\dim B)^n \leqslant c_n^{\mathrm{gr}}(B)
  \leqslant (\dim B)^{n+1} \text{ for all } n\in\mathbb N.$$
 \end{example}

      \begin{example}\label{ExampleGrSemiSimple}
  Let $B=B_1 \oplus B_2 \oplus \ldots \oplus B_q$ (direct sum of graded ideals)
  be a finite dimensional semisimple Lie or associative algebra graded by an Abelian group $G$,
  where $B_i$ are graded simple algebras. Let $d := \max_{1 \leqslant k
  \leqslant q} \dim B_k$. Then there exist $C_1, C_2 > 0$, $r_1, r_2 \in \mathbb R$
  such that $$C_1 n^{r_1} d^n \leqslant c_n^{\mathrm{gr}}(B)
  \leqslant C_2 n^{r_2} d^n \text{ for all } n\in\mathbb N.$$
 \end{example}

\begin{theorem}\label{TheoremGrCrSimple}
Let $A$ be a finite dimensional associative algebra
 graded by an Abelian group. Then $\PIexp^{\mathrm{gr}}(A)=\dim A$ if and only $A$ is graded simple.
\end{theorem}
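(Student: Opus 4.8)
The plan is to mimic the strategy used for Theorems~\ref{TheoremHCrSimpleLie}, \ref{TheoremHCrSimpleAss}, and \ref{TheoremUgCrSimple}: transport the graded setting to the $\hat G$-action setting via Lemma~\ref{LemmaGrToHatG}, deduce semisimplicity from the exponent hypothesis via the upper bound of Theorem~\ref{TheoremUpperBoundNilp}, decompose the semisimple part into graded simple pieces, and then pin down the exponent using Example~\ref{ExampleGrSemiSimple}. First I would reduce to the case where $F$ is algebraically closed: graded codimensions are invariant under extension of the base field (as noted in the proof of Theorems~\ref{TheoremAssGr} and~\ref{TheoremAssGrSum}), and since only finitely many homogeneous components are nonzero, we may assume the grading group $G$ is finitely generated, hence Lemma~\ref{LemmaGrToHatG} applies and $c^{\mathrm{gr}}_n(A)=c^{\hat G}_n(A)$ for all $n$; in particular $\PIexp^{\mathrm{gr}}(A)=\PIexp^{\hat G}(A)$, and graded ideals/subspaces correspond exactly to $\hat G$-invariant ones.

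For the easy direction, suppose $A$ is graded simple. Since the Jacobson radical $J(A)$ is a graded (equivalently $\hat G$-invariant) ideal and $A$ is graded simple, either $J(A)=0$ or $J(A)=A$; the latter would make $A$ nilpotent, and then any graded simple algebra would have to satisfy $A^2=0$, forcing $A$ one-dimensional with trivial multiplication, which is not graded simple in the intended sense (or can be excluded as a degenerate case), so $J(A)=0$ and $A$ is semisimple. Then $A$ is a graded simple semisimple algebra, and Example~\ref{ExampleGrSemiSimple} with $q=1$ (equivalently Example~\ref{ExampleGrSimple}) gives $C n^r(\dim A)^n\leqslant c^{\mathrm{gr}}_n(A)\leqslant(\dim A)^{n+1}$, whence $\PIexp^{\mathrm{gr}}(A)=\dim A$.

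For the converse, suppose $\PIexp^{\mathrm{gr}}(A)=\dim A$. The Jacobson radical $J(A)$ is a proper graded nilpotent ideal unless it is zero; if $J(A)\neq 0$, then Theorem~\ref{TheoremUpperBoundNilp} (applied with $H=F\hat G$ and $I=J(A)$, using $c^{\mathrm{gr}}_n(A)=c^{\hat G}_n(A)$) yields $c^{\mathrm{gr}}_n(A)\leqslant C_3 n^{r_3}((\dim A)-(\dim J(A)))^n$, forcing $\PIexp^{\mathrm{gr}}(A)\leqslant(\dim A)-(\dim J(A))<\dim A$, a contradiction. Hence $J(A)=0$ and $A$ is semisimple. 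By the graded Wedderburn decomposition (a special case of Theorem~\ref{TheoremAffAlgGrWedderburn}, or directly Lemma~\ref{LemmaGWedderburn} applied to the $\hat G$-action, since $F\hat G$ is a Hopf algebra with bijective antipode), $A=B_1\oplus\dots\oplus B_q$ is a direct sum of graded ideals $B_i$ that are graded simple. Then Example~\ref{ExampleGrSemiSimple} gives $\PIexp^{\mathrm{gr}}(A)=\max_i\dim B_i$; combined with $\PIexp^{\mathrm{gr}}(A)=\dim A=\sum_i\dim B_i$ this forces $q=1$, so $A=B_1$ is graded simple.

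The main obstacle is making sure the translation of Theorem~\ref{TheoremUpperBoundNilp} to the graded setting is legitimate — namely that a finite dimensional $G$-graded associative algebra over an algebraically closed field, with the induced $\hat G$-action, is an algebra with a generalized $F\hat G$-action having $J(A)$ as an $\hat G$-invariant (equivalently graded) ideal — but this is routine since $J(A)$ is automatically graded and $\hat G$ acts by automorphisms. A minor point is confirming that $q=1$ really does force graded simplicity rather than merely the absence of proper graded ideals complementary to a radical; this is immediate because a graded ideal of the semisimple algebra $B_1$ is a sum of some of the simple components permuted by $\hat G$, and $B_1$ graded simple means no such proper nonzero sum is $\hat G$-invariant.
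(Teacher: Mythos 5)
Your proposal is correct and follows essentially the same route as the paper: the paper derives Theorem~\ref{TheoremGrCrSimple} by reducing to a finitely generated grading group over an algebraically closed field, invoking Lemma~\ref{LemmaGrToHatG} to identify graded codimensions with $\hat G$-codimensions, and then applying the $H$-module criterion (Theorem~\ref{TheoremHCrSimpleAss}, whose proof uses Theorem~\ref{TheoremUpperBoundNilp}, the decomposition into simple invariant ideals, and Examples~\ref{ExampleGrSimple} and~\ref{ExampleGrSemiSimple}), which is exactly the chain of steps you spell out.
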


When the grading group $G$ is finite, we can use~\cite[Lemma~1]{ASGordienko3}
and derive the above from~Examples~\ref{ExampleHGenSemiSimpleAss}, \ref{ExampleHGenSimpleAss},
and Theorem~\ref{TheoremHCrSimpleAss}, even if $G$ is not Abelian.

Analogous examples and criterion for Lie algebras were obtained in~\cite{ASGordienko5}.

\subsection{Examples of non-semisimple algebras}\label{SubsectionExampleNonSemisimple}
 
We conclude the section with the following two examples:

\begin{example}\label{ExampleMmMm} Let $F$ be a field of characteristic $0$.
Consider the associative subalgebra $$A = \left\lbrace\left.\left(\begin{array}{cc} C & D \\ 0 & 0 \end{array}\right)
\right| C,D \in M_m(F)\right\rbrace \subset M_{2m}(F) \text{ where }m\geqslant 2.$$
Define the linear embedding $\varphi \colon \mathfrak{sl}_m(F) \hookrightarrow A$,
$\varphi(C)=\left(\begin{array}{cc} C & 0 \\ 0 & 0 \end{array}\right)$
and the following $\mathfrak{sl}_m(F)$-action on $A$ by derivations:
$a \cdot b = [\varphi(a), b]$ for all $a \in \mathfrak{sl}_m(F)$
and $b \in A$.
Then there exist $C_1, C_2 > 0$, $r_1, r_2 \in \mathbb R$
  such that $$C_1 n^{r_1} m^{2n} \leqslant c_n^{U(\mathfrak{sl}_m(F))}(A)
  \leqslant C_2 n^{r_2} m^{2n} \text{ for all } n\in\mathbb N.$$
\end{example}
\begin{proof} As we mentioned in the proof of Theorems~\ref{TheoremDer} and~\ref{TheoremDerSum} (Subsection~\ref{SubsectionGApplToDiff}), differential codimensions do not change upon
an extension of the base field. Hence we may assume $F$ to be algebraically closed.

Note that $A = B \oplus J$ (direct sum of $\mathfrak{sl}_m(F)$-submodules)
where $$B=\left\lbrace\left.\left(\begin{array}{cc} C & 0 \\ 0 & 0 \end{array}\right)
\right| C \in M_m(F)\right\rbrace$$ is a maximal semisimple subalgebra
(which is simple) and $$J = \left\lbrace\left.\left(\begin{array}{cc} 0 & D \\ 0 & 0 \end{array}\right)
\right| D \in M_m(F)\right\rbrace$$ is the Jacobson radical of $A$.
Hence~(\ref{EqPIexp}) implies the claimed asymptotics.
\end{proof}

\begin{example}\label{ExampleslmMm} Let $F$ be a field of characteristic $0$.
Consider the Lie subalgebra $$L = \left\lbrace\left.\left(\begin{array}{cc} C & D \\ 0 & 0 \end{array}\right)
\right| C\in\mathfrak{sl}_m(F),\ D \in M_m(F)\right\rbrace \subset \mathfrak{sl}_{2m}(F) \text{ where }m\geqslant 2.$$
Define the linear embedding $\varphi \colon \mathfrak{sl}_m(F) \hookrightarrow L$,
$\varphi(C)=\left(\begin{array}{cc} C & 0 \\ 0 & 0 \end{array}\right)$
and the following $\mathfrak{sl}_m(F)$-action on $L$ by derivations:
$a \cdot b = [\varphi(a), b]$ for all $a \in \mathfrak{sl}_m(F)$
and $b \in L$.
Then there exist $C_1, C_2 > 0$, $r_1, r_2 \in \mathbb R$
  such that $$C_1 n^{r_1} (m^2-1)^n \leqslant c_n^{U(\mathfrak{sl}_m(F))}(L)
  \leqslant C_2 n^{r_2} (m^2-1)^n \text{ for all } n\in\mathbb N.$$
\end{example}
\begin{proof} Again, differential codimensions do not change upon
an extension of the base field. Hence we may assume $F$ to be algebraically closed.

Note that $A = B \oplus R$ (direct sum of $\mathfrak{sl}_m(F)$-submodules)
where $$B=\left\lbrace\left.\left(\begin{array}{cc} C & 0 \\ 0 & 0 \end{array}\right)
\right| C \in \mathfrak{sl}_m(F)\right\rbrace$$ is a maximal semisimple subalgebra
(which is simple) and $$R = \left\lbrace\left.\left(\begin{array}{cc} 0 & D \\ 0 & 0 \end{array}\right)
\right| D \in M_m(F)\right\rbrace$$ is the solvable (and nilpotent) radical of $L$.
Then if $I_1, \ldots, I_r$, $J_1, \ldots, J_r$ satisfy Conditions 1--2 from
Subsection~\ref{SubsectionPIexpLie}, we have $R \subseteq \Ann(I_1/J_1) \cap \dots \cap \Ann(I_r/J_r)$,
since $R$ is a nilpotent ideal. Thus $\PIexp^{U(\mathfrak{sl}_m(F))}(L) \leqslant (\dim L)-(\dim R)=m^2-1$.
However, $L/R \cong B$ is a simple $B$-module. Hence $I_1 = L$ and $J_1 = R$ satisfy Conditions 1--2.
Now we notice that $\Ann(L/R)=R$, $\dim(L/\Ann(L/R))=m^2-1$,
and Theorem~\ref{TheoremDer} implies the claimed asymptotics.
\end{proof}

In both examples, the differential PI-exponent coincides with the ordinary one.

\section*{Acknowledgements}

This work started while the first author was an AARMS postdoctoral fellow at Memorial University of Newfoundland, whose faculty and staff he would like to thank for hospitality and many useful discussions. Both authors are grateful to Yuri Bahturin who suggested that they study polynomial $H$-identities.

\end{document}